\tikzset{
	symbol/.style={
		draw=none,
		every to/.append style={
			edge node={node [sloped, allow upside down, auto=false]{$#1$}}}
	}
}
\newtheorem{theorem}{Theorem}[section]
\newtheorem{proposition}[theorem]{Proposition}
\newtheorem{lemma}[theorem]{Lemma}
\newtheorem{corollary}[theorem]{Corollary}
\newcommand{\arcsinh}{\operatorname{arcsinh}}
\theoremstyle{definition}
\theoremstyle{remark}
\newtheorem{remark}[theorem]{Remark}
\newtheorem{question}[theorem]{Question}
\numberwithin{equation}{section}
\begin{document}

\title[Square-integrability of the Mirzakhani function]{Square-integrability of the Mirzakhani function and statistics of simple closed geodesics on hyperbolic surfaces}



\author{Francisco Arana--Herrera}
\author{Jayadev S.~Athreya}

\email{farana@stanford.edu}
\email{jathreya@uw.edu}

\address{Department of Mathematics, Stanford University, 450 Serra Mall
	Building 380, Stanford, CA 94305-2125, USA}
\address{Department of Mathematics, University of Washington, Padelford Hall, Seattle, WA 98195-4350, USA}

\thanks{J.S.A. partially supported by NSF CAREER grant DMS 1559860}



\date{}

\begin{abstract}
Given integers $g,n \geq 0$ satisfying $2-2g-n < 0$, let $\mathcal{M}_{g,n}$ be the moduli space of connected, oriented, complete, finite area hyperbolic surfaces of genus $g$ with $n$ cusps. We study the global behavior of the Mirzakhani function $B \colon \mathcal{M}_{g,n} \to \mathbf{R}_{\geq 0}$ which assigns to $X \in \mathcal{M}_{g,n}$ the Thurston measure of the set of measured geodesic laminations on $X$ of hyperbolic length $\leq 1$. We improve bounds of Mirzakhani describing the behavior of this function near the cusp of $\mathcal{M}_{g,n}$ and deduce that $B$ is square-integrable with respect to the Weil-Petersson volume form. We relate this knowledge of $B$ to statistics of counting problems for simple closed hyperbolic geodesics.\\
\end{abstract}

\maketitle


\thispagestyle{empty}

\tableofcontents

\section{Introduction}

$ $

In \cite{Mir08b}, Mirzakhani gave a precise description of the growth of the number of simple closed geodesics of length $\leq L$ of a fixed topological type on an arbitrary connected, orientable, complete, finite area hyperbolic surface as $L \to \infty$. Fix integers $g,n \geq 0$ satisfying $2-2g-n < 0$. Let $\mathcal{T}_{g,n}$ and $\mathcal{M}_{g,n}$ be the Teichmüller and moduli spaces of connected, oriented, complete, finite area hyperbolic surfaces of genus $g$ with $n$ cusps. Fix a connected, oriented surface $S_{g,n}$ of genus $g$ with $n$ punctures and let $\text{Mod}_{g,n}$ be its mapping class group. Given a rational multi-curve $\gamma$ on $S_{g,n}$ and a hyperbolic surface $X \in \mathcal{M}_{g,n}$, Mirzakhani considered for every $L > 0$ the counting function
\begin{equation}
\label{eq:count}
s(X,\gamma,L) := \# \{\alpha \in \text{Mod}_{g,n} \cdot \gamma \ | \ \ell_\gamma(X) \leq L \},
\end{equation}
where $\ell_\gamma(X) >0 $ denotes the hyperbolic length of the unique geodesic representative of $\gamma$ in $X$. The following theorem, corresponding to Theorem 1.1 in \cite{Mir08b}, shows that $s(X,\gamma,L)$ behaves asymptotically like a polynomial of degree $6g-6+2n$ when $L \to \infty$.\\

\begin{theorem}\cite[Theorem 1.1]{Mir08b}
	\label{theo:mir_count}
	For any $X \in \mathcal{M}_{g,n}$ and any rational multi-curve $\gamma$ on $S_{g,n}$,
	\[
	\lim_{L \to \infty} \frac{s(X,\gamma,L)}{L^{6g-6+2n}} = n_\gamma(X),
	\]
	where $n_{\gamma}(X) \colon \mathcal{M}_{g,n} \to \mathbf{R}_{>0}$ is a continuous proper function.\\
\end{theorem}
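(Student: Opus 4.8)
The plan is to transfer the counting problem to the space $\mathcal{ML}_{g,n}$ of measured geodesic laminations on $S_{g,n}$, where it becomes an equidistribution statement for a single $\mathrm{Mod}_{g,n}$-orbit, and then to identify the limiting measure using invariance, homogeneity, and the unique ergodicity of the Thurston measure. Set $d := 6g-6+2n = \dim_{\mathbf{R}} \mathcal{ML}_{g,n}$. Using the homogeneity $\ell_{t\lambda}(X) = t\,\ell_\lambda(X)$ of hyperbolic length and writing $B_X := \{\lambda \in \mathcal{ML}_{g,n} : \ell_\lambda(X) \le 1\}$ for the unit length ball, an element $\alpha \in \mathrm{Mod}_{g,n}\cdot\gamma$ satisfies $\ell_\alpha(X) \le L$ exactly when $\alpha/L \in B_X$, so that
\[
\frac{s(X,\gamma,L)}{L^{d}} \;=\; m_L(B_X), \qquad\text{where}\qquad m_L \;:=\; \frac{1}{L^{d}} \sum_{\alpha \in \mathrm{Mod}_{g,n}\cdot\gamma} \delta_{\alpha/L}
\]
is a Radon measure on $\mathcal{ML}_{g,n}$. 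The Thurston measure $\mu_{\mathrm{Th}}$ lies in the piecewise-linear Lebesgue class on the $d$-dimensional space $\mathcal{ML}_{g,n}$, is $\mathrm{Mod}_{g,n}$-invariant, scales by $(t\cdot)_*\mu_{\mathrm{Th}} = t^{-d}\mu_{\mathrm{Th}}$, and assigns zero mass to $\partial B_X = \{\lambda : \ell_\lambda(X) = 1\}$; by definition the Mirzakhani function is $B(X) = \mu_{\mathrm{Th}}(B_X) \in (0,\infty)$. The theorem thus reduces to showing $m_L \rightharpoonup c(\gamma)\,\mu_{\mathrm{Th}}$ weakly as $L \to \infty$ for a constant $c(\gamma) > 0$ depending only on the topological type of $\gamma$, since then $s(X,\gamma,L)/L^{d} = m_L(B_X) \to c(\gamma)\,\mu_{\mathrm{Th}}(B_X) = c(\gamma) B(X)$.

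Because $\mathrm{Mod}_{g,n}\cdot\gamma$ is $\mathrm{Mod}_{g,n}$-invariant, each $m_L$ is \emph{exactly} $\mathrm{Mod}_{g,n}$-invariant, and $(t\cdot)_* m_L = t^{-d} m_{L/t}$ holds identically. If the family $\{m_L\}_{L \ge 1}$ is precompact in the weak-$*$ topology of Radon measures, then any limit point $m_\infty$ is a $\mathrm{Mod}_{g,n}$-invariant Radon measure inheriting the homogeneity $(t\cdot)_* m_\infty = t^{-d}m_\infty$; by Masur's theorem on the ergodicity of $\mathrm{Mod}_{g,n}$ acting on $(\mathcal{ML}_{g,n},\mu_{\mathrm{Th}})$, together with the classification of $\mathrm{Mod}_{g,n}$-invariant measures on $\mathcal{ML}_{g,n}$, such an $m_\infty$ must be a nonnegative multiple of $\mu_{\mathrm{Th}}$. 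To see that this multiple is the same for every limit point — so that $m_L$ genuinely converges — and to identify it, I would integrate over moduli space: by Mirzakhani's integration formula, $\int_{\mathcal{M}_{g,n}} \big(\sum_{\alpha \in \mathrm{Mod}_{g,n}\cdot\gamma} f(\ell_\alpha(X))\big)\,dX_{\mathrm{WP}}$ is an explicit integral transform of $f$ against products of her Weil–Petersson volume polynomials; specializing to $f = \mathbf{1}_{[0,L]}$ and using the volume recursion shows that $\int_{\mathcal{M}_{g,n}} s(X,\gamma,L)\,dX_{\mathrm{WP}}$ is a polynomial in $L$ of degree $d$, with leading coefficient a positive constant $c(\gamma)$ times $b_{g,n} := \int_{\mathcal{M}_{g,n}} B(X)\,dX_{\mathrm{WP}} < \infty$. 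Passing this limit under the integral, $\int_{\mathcal{M}_{g,n}} m_\infty(B_X)\,dX_{\mathrm{WP}} = c(\gamma)\, b_{g,n}$, and since $m_\infty = c\,\mu_{\mathrm{Th}}$ gives $c\,b_{g,n}$ on the left, this pins down $c = c(\gamma)$ uniformly.

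The main obstacle is exactly the precompactness of $\{m_L\}$ and the justification of passing the limit under the integral, both of which call for an a priori uniform upper bound of the form $s(X,\gamma,L) \le C_\gamma\, L^{d}\, B(X)$ (up to lower-order terms in $L$), valid on all of $\mathcal{M}_{g,n}$ including near its cusp, so that dominated convergence against the integrable function $B$ applies; this is precisely the kind of estimate — controlling the lattice count as $X$ develops short closed geodesics — that is sharpened in the present paper. One should also upgrade subsequential convergence to genuine convergence of $m_L(B_X)$ for each fixed $X$, which is facilitated by the monotonicity of $L \mapsto s(X,\gamma,L)$ and the uniformity of the constant just established.

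Finally, with $n_\gamma(X) = c(\gamma)\, B(X) / b_{g,n}$ and $c(\gamma) > 0$: positivity is immediate from $0 < B(X) < \infty$ for every $X$; continuity of $X \mapsto n_\gamma(X)$ follows from continuity of $X \mapsto B(X)$, which in turn follows from the continuous variation of the bounded star-shaped sets $B_X$ together with $\mu_{\mathrm{Th}}(\partial B_X) = 0$; and properness follows from the fact that $B(X) \to +\infty$ as $X$ leaves every compact subset of $\mathcal{M}_{g,n}$ — a collar-lemma computation shows that a short closed geodesic on $X$ produces a large ball $B_X$ — so that for $0 < a \le b < \infty$ the closed set $n_\gamma^{-1}([a,b])$ is contained in a compact part of $\mathcal{M}_{g,n}$ and hence compact.
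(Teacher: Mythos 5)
This theorem is not proved in the paper you were given: it is quoted verbatim from \cite{Mir08b}, so the relevant comparison is with Mirzakhani's original argument. Your outline reproduces its architecture faithfully: unfold the count to the rescaled orbit measures $m_L$ on $\mathcal{ML}_{g,n}$, use $\mathrm{Mod}_{g,n}$-invariance and homogeneity, identify weak-$*$ limit points as multiples of $\mu_{\mathrm{Thu}}$, pin down the multiple by integrating over $\mathcal{M}_{g,n}$ with the integration formula (Proposition \ref{prop:mir_freq} in this paper), and use a cusp-uniform bound of the type of Proposition \ref{mir_original_bound} to justify precompactness and the passage of the limit under the integral (dominated convergence against an integrable majorant); the continuity, positivity, and properness of $n_\gamma$ then come from the corresponding properties of $B$, exactly as you say.

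The one step that does not go through as written is the identification of the limit points. Masur's theorem gives ergodicity of $\mu_{\mathrm{Thu}}$, hence uniqueness of the invariant measure \emph{within the Lebesgue measure class}; it does not say that every $\mathrm{Mod}_{g,n}$-invariant Radon measure on $\mathcal{ML}_{g,n}$, even a homogeneous one, is a multiple of $\mu_{\mathrm{Thu}}$. The full classification of locally finite invariant measures that you invoke (Lindenstrauss--Mirzakhani, Hamenst\"adt) is a later and much deeper theorem than the counting result, and even granting it you would still have to exclude contributions from the ergodic components supported on laminations with closed components; degree-$(6g-6+2n)$ homogeneity of $m_\infty$ does not immediately do this, since $m_\infty$ need not be ergodic and scaling permutes, rather than preserves, those components. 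The fix used in \cite{Mir08b} is elementary and should replace this step: since $\gamma$ is rational, $q\gamma$ is integral for some $q \in \mathbf{N}$, so $\mathrm{Mod}_{g,n}\cdot\gamma \subseteq \frac{1}{q}\mathcal{ML}_{g,n}(\mathbf{Z})$ and hence $m_L \leq q^{6g-6+2n}\,\mu^{qL}$, with $\mu^{qL}$ the counting measure of (\ref{ML_counting_measure}); as $\mu^{qL} \to \mu_{\mathrm{Thu}}$ by the very definition of the Thurston measure, every weak-$*$ limit point satisfies $m_\infty \leq q^{6g-6+2n}\mu_{\mathrm{Thu}}$, is therefore absolutely continuous with respect to $\mu_{\mathrm{Thu}}$, and Masur's ergodicity alone then forces $m_\infty = c\,\mu_{\mathrm{Thu}}$. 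With that substitution (and after fixing the harmless slip in which $c(\gamma)$ denotes sometimes the leading coefficient of $P(L,\gamma)$ and sometimes that coefficient divided by $b_{g,n}$), your proposal is essentially the proof from \cite{Mir08b}.
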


Mirzakhani's understanding of the asymptotics of $s(X,\gamma,L)$ go even deeper: she provides an explicit description of the dependency of the leading coefficient $n_{\gamma}(X)$ on the rational multi-curve $\gamma$ and the hyperbolic surface $X$. More precisely, let $\mathcal{ML}_{g,n}$ be the space of measured geodesic laminations on $S_{g,n}$ and $\mu_{\text{Thu}}$ be the Thurston measure on $\mathcal{ML}_{g,n}$. Consider the function $B \colon \mathcal{M}_{g,n} \to \mathbf{R}_{>0}$ given by
\[
B(X) := \mu_{\text{Thu}}(\{\lambda \in \mathcal{ML}_{g,n} \ | \ \ell_{\lambda}(X) \leq 1 \}),
\]
where $\ell_{\lambda}(X) > 0$ denotes the hyperbolic length of the measured geodesic lamination $\lambda$ on $X$. The following proposition corresponds to Proposition 3.2 and Theorem 3.3. in \cite{Mir08b}.\\

\begin{proposition}\cite[Proposition 3.2 and Theorem 3.3]{Mir08b}
	\label{prop:Borig}
	The function $B \colon \mathcal{M}_{g,n} \to \mathbf{R}_{>0}$ is continuous, proper, and integrable with respect to the Weil-Petersson volume form on $\mathcal{M}_{g,n}$.\\
\end{proposition}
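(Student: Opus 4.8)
The plan is to treat the three assertions in turn: finiteness and continuity are soft, whereas properness and integrability both rest on a single pointwise two-sided estimate for $B(X)$ in terms of the lengths of the short closed geodesics of $X$, coming from the collar lemma and the geometry of Dehn--Thurston coordinates.

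\emph{Finiteness and continuity.} For a fixed marked surface $\widetilde{X}\in\mathcal{T}_{g,n}$, the length function $\lambda\mapsto\ell_\lambda(\widetilde{X})$ on $\mathcal{ML}_{g,n}$ is continuous, positive away from the origin, and homogeneous of degree one, so $\{\lambda:\ell_\lambda(\widetilde{X})\leq 1\}$ is the compact cone on the sphere $\{\lambda:\ell_\lambda(\widetilde{X})=1\}$; since $\mu_{\mathrm{Thu}}$ is Radon, $B(\widetilde{X})<\infty$. Homogeneity of $\mu_{\mathrm{Thu}}$ of degree $d:=6g-6+2n$ gives $\mu_{\mathrm{Thu}}(\{\ell_\lambda(\widetilde{X})\leq s\})=s^{d}B(\widetilde{X})$, so the sphere $\{\ell_\lambda(\widetilde{X})=1\}$ is $\mu_{\mathrm{Thu}}$-null. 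Given $X_k\to X$ in $\mathcal{M}_{g,n}$, pick lifts $\widetilde{X}_k\to\widetilde{X}$ in $\mathcal{T}_{g,n}$: joint continuity of the length pairing gives $\mathbbm{1}_{\{\ell_\lambda(\widetilde{X}_k)\leq1\}}\to\mathbbm{1}_{\{\ell_\lambda(\widetilde{X})\leq1\}}$ for $\mu_{\mathrm{Thu}}$-a.e.\ $\lambda$, while uniform comparability of length functions over a compact neighborhood $K$ of $\widetilde{X}$ provides, for $k$ large, the $\mu_{\mathrm{Thu}}$-integrable dominating function $\mathbbm{1}_{\{\ell_\lambda(\widetilde{X})\leq C_K\}}$; dominated convergence yields $B(X_k)\to B(X)$.

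\emph{The key estimate.} Fix $\varepsilon_0$ below the Margulis constant and, for $X\in\mathcal{M}_{g,n}$, let $\gamma_1,\dots,\gamma_j$ be its simple closed geodesics of length $<\varepsilon_0$ — a disjoint collection with $j\leq 3g-3+n$, by the collar lemma. I claim there is $C=C(g,n)>1$ with
\begin{equation}\label{eq:Bkey}
\frac1C\prod_{i=1}^{j}\frac{1}{\ell_{\gamma_i}(X)\,\log(2/\ell_{\gamma_i}(X))}\;\leq\;B(X)\;\leq\;C\prod_{i=1}^{j}\frac{1}{\ell_{\gamma_i}(X)}.
\end{equation}
Complete $\{\gamma_i\}$ to a Bers pants decomposition $P=\{\gamma_i\}_{i=1}^{3g-3+n}$ of $X$ (all components of length $\leq L_{g,n}$) and pass to Dehn--Thurston coordinates $(m_i,t_i)_i$ on $\mathcal{ML}_{g,n}$ relative to $P$, in which $\mu_{\mathrm{Thu}}$ is a fixed multiple of Lebesgue measure and the locus where some $m_i$ vanishes is $\mu_{\mathrm{Thu}}$-null. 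The collar lemma and the bounded geometry of the pieces of $P$ give the standard length estimate (essentially Mirzakhani's, \cite{Mir08b}): $\ell_\lambda(X)$ is comparable, with constants depending only on $g$ and $n$, to $\sum_i\bigl(m_i\,w_i(X)+|t_i|\,\ell_{\gamma_i}(X)\bigr)$, where $w_i(X)\geq 1$ is comparable to $\log(2/\ell_{\gamma_i}(X))$ when $\gamma_i$ is short and to $1$ otherwise. Hence $\{\ell_\lambda(X)\leq1\}$, off the null locus, is contained in a coordinate box of side $O(1/w_i(X))$ in each $m_i$ and $O(1/\ell_{\gamma_i}(X))$ in each $t_i$; computing its $\mu_{\mathrm{Thu}}$-measure and using $w_i(X)\geq1$ together with $\varepsilon_0\leq\ell_{\gamma_i}(X)\leq L_{g,n}$ for $i>j$ gives the upper bound in \eqref{eq:Bkey}, and a fixed rescaling of a comparable box lies inside $\{\ell_\lambda(X)\leq1\}$ and gives the lower bound.

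\emph{Properness and integrability.} If $X$ leaves every compact subset of $\mathcal{M}_{g,n}$ then the length of its shortest closed geodesic tends to $0$ (Mumford compactness), so $\ell_{\gamma_1}(X)\to0$ and the left inequality in \eqref{eq:Bkey} forces $B(X)\to\infty$; thus $B$ is proper. For integrability, decompose $\mathcal{M}_{g,n}$ into the compact thick part $\mathcal{M}_{g,n}^{\geq\varepsilon_0}$ — on which $B$ is bounded by \eqref{eq:Bkey} — and the finitely many thin strata $\mathcal{M}_\Gamma$, indexed by mapping-class-group orbits of simple multicurves $\Gamma$, where $\mathcal{M}_\Gamma$ consists of the $X$ whose closed geodesics of length $<\varepsilon_0$ are exactly the components of $\Gamma$. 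On $\mathcal{M}_\Gamma$, Wolpert's formula $\mu_{\mathrm{WP}}=\prod d\ell_\gamma\wedge d\tau_\gamma$ in Fenchel--Nielsen coordinates adapted to $\Gamma$, the fact that each $\tau_\gamma$ runs over an interval of length comparable to $\ell_\gamma$ in the quotient (contributing $\prod_\gamma\ell_\gamma$), and the boundedness of the Weil--Petersson volume of the complementary moduli space as the $\ell_\gamma\in(0,\varepsilon_0)$ vary, combined with \eqref{eq:Bkey}, yield
\[
\int_{\mathcal{M}_\Gamma}B\,d\mu_{\mathrm{WP}}\;\leq\;C'\!\!\int_{(0,\varepsilon_0)^{\#\Gamma}}\ \prod_{\gamma\subseteq\Gamma}\frac1{\ell_\gamma}\cdot\prod_{\gamma\subseteq\Gamma}\ell_\gamma\,d\ell_\gamma\;=\;C'\,\varepsilon_0^{\#\Gamma}\;<\;\infty .
\]
Summing over the finitely many strata gives $\int_{\mathcal{M}_{g,n}}B\,d\mu_{\mathrm{WP}}<\infty$. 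The only substantial step is the length estimate behind \eqref{eq:Bkey}, and in particular its uniformity when several curves of $P$ are simultaneously short; everything else is soft. (The argument does not use Theorem~\ref{theo:mir_count}, as it should not, since that theorem is deduced from the present proposition.)
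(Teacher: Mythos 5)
Your argument is correct and is essentially the route the paper takes: the paper itself only cites Mirzakhani for Proposition \ref{prop:Borig}, but your key two-sided estimate is exactly the Bers-decomposition/Dehn--Thurston/collar-width box computation used in the proof of Theorem \ref{theo:mir_new_bound} (here in the weaker form of Proposition \ref{mir_original_bound}, which indeed suffices for $L^1$), and your thin-part integration in Fenchel--Nielsen coordinates via Wolpert's magic formula mirrors the proof of Proposition \ref{prop:F_L2}. The soft continuity and properness steps you supply (dominated convergence plus Mumford compactness) are standard and fill in what the paper leaves to the citation.
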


Let $\widehat{\mu}_{wp}$ be the measure induced by the Weil-Petersson volume form on $\mathcal{M}_{g,n}$. We consider the normalization constant
\[
b_{g,n} := \int_{\mathcal{M}_{g,n}} B(X) \ d\widehat{\mu}_{\text{wp}}(X) < \infty.
\]
$ $

The last result needed to describe the leading coefficient $n_\gamma(X)$ is the following proposition, corresponding to Corollary 5.2 in \cite{Mir08b}.\\

\begin{proposition}\cite[Corollary 5.2]{Mir08b}
	\label{prop:mir_freq}
	For any rational multi-curve $\gamma$ on $S_{g,n}$, the integral
	\[
	P(L,\gamma) := \int_{\mathcal{M}_{g,n}} s(X,\gamma,L) \ d\widehat{\mu}_{\text{wp}}(X)
	\]
	is a polynomial of degree $6g-6+2n$ in $L > 0$ with non-negative coefficients and whose leading coefficient
	\[
	c(\gamma) := \lim_{L \to \infty} \frac{P(L,\gamma)}{L^{6g-6+2n}}
	\]
	is a positive rational number.\\
\end{proposition}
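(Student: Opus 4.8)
The plan is to feed the counting function into Mirzakhani's integration formula and then evaluate the resulting Euclidean integral by hand. Write $\gamma = \sum_{i=1}^{k} c_i \gamma_i$ with $c_i \in \mathbf{Q}_{>0}$ and $\gamma_1,\dots,\gamma_k$ pairwise disjoint, pairwise non-isotopic essential simple closed curves on $S_{g,n}$, and set $d := 3g-3+n$, so $6g-6+2n = 2d$. Since $\ell_\alpha(X) = \sum_i c_i \ell_{\alpha_i}(X)$ for $\alpha = \sum_i c_i \alpha_i \in \mathrm{Mod}_{g,n}\cdot\gamma$, the invariant function $s(\cdot,\gamma,L)$ is the geometric function associated by Mirzakhani's formalism to the tuple $(\gamma_1,\dots,\gamma_k)$ and the symmetric test function $f(x_1,\dots,x_k) = \mathbbm{1}\{\sum_i c_i x_i \le L\}$. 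Mirzakhani's integration formula then yields
\[
P(L,\gamma) \;=\; c_\gamma \int_{\Delta_L} V_{g,n}(\gamma;x)\, x_1\cdots x_k \; dx_1\cdots dx_k , \qquad
\Delta_L := \Bigl\{\, x \in \mathbf{R}_{>0}^{k} : \textstyle\sum_i c_i x_i \le L \,\Bigr\},
\]
where $c_\gamma>0$ is an explicit rational constant (a power of $1/2$ times $1/|\mathrm{Sym}(\gamma)|$) and $V_{g,n}(\gamma;x) = \prod_j V_{g_j,n_j}(x^{(j)})$ is the product, over the connected components $R_j$ of $S_{g,n}\setminus\gamma$, of the Weil--Petersson volume polynomials of the corresponding moduli spaces, with boundary length $x_i$ along each cut $\gamma_i$ and length $0$ at the original cusps. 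In particular the finiteness of the right-hand side reproves the integrability of $s(\cdot,\gamma,L)$, consistent with Proposition~\ref{prop:Borig}.

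The next step is to recall the polynomial structure of the volumes. By Mirzakhani's description of Weil--Petersson volume polynomials, each $V_{g_j,n_j}(L_1,\dots,L_{n_j})$ is a polynomial in $L_1^2,\dots,L_{n_j}^2$ of degree $d_j := 3g_j-3+n_j$ all of whose coefficients are non-negative (each being a positive rational times a power of $\pi^2$), and whose top-degree homogeneous part has \emph{rational} coefficients with no powers of $\pi$ (the $\psi$-class intersection numbers $\langle \tau_{\alpha_1}\cdots\tau_{\alpha_{n_j}}\rangle_{g_j}$ up to explicit rational factors); this top-degree part is moreover not identically zero, since at least one such intersection number is positive. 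Because cutting along $\gamma$ satisfies $\sum_j d_j = d-k$, the product $V_{g,n}(\gamma;x)$ is an even polynomial in $x=(x_1,\dots,x_k)$ of degree $2(d-k)$ with non-negative coefficients, whose degree-$2(d-k)$ part has rational coefficients and is non-negative and not identically zero on $\mathbf{R}_{>0}^k$.

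Finally, integrate term by term. For a monomial $x^a = x_1^{a_1}\cdots x_k^{a_k}$ the substitution $x = Ly$ gives $\int_{\Delta_L} x^a\, dx = L^{|a|+k}\int_{\Delta_1} y^a\, dy$, and $\int_{\Delta_1} y^a\, dy = \bigl(\prod_i c_i^{-a_i-1}\bigr)\,\prod_i a_i! \big/ (|a|+k)!$ is a positive rational — here rationality of the $c_i$ is essential. The monomials of $V_{g,n}(\gamma;x)\,x_1\cdots x_k$ have total degrees $k, k+2, \dots, 2d-k$, so after integration $P(L,\gamma) = \sum_{m=k}^{d} q_m L^{2m}$ with every $q_m \ge 0$; and the leading coefficient $q_d$, produced by the top-degree part of $V_{g,n}(\gamma;x)$, is a positive rational (positive because that part, times $x_1\cdots x_k$, is non-negative and not identically zero on the open region $\Delta_1$; rational because its coefficients and the simplex integrals and $c_\gamma$ are all rational). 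Hence $P(L,\gamma)$ is a polynomial in $L$ of degree exactly $2d = 6g-6+2n$ with non-negative coefficients, and $c(\gamma) = q_d \in \mathbf{Q}_{>0}$.

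Given Mirzakhani's integration formula and her volume-polynomial results, the argument is essentially a bookkeeping exercise; the one substantive point is the strict positivity of $c(\gamma)$, which reduces to the non-vanishing (equivalently, the known non-negativity together with non-triviality) of the top-degree part of each $V_{g_j,n_j}$, i.e.\ to positivity of $\psi$-class intersection numbers on $\overline{\mathcal{M}}_{g_j,n_j}$. Minor care is also needed to track the constant $c_\gamma$ — the factors of $1/2$ for curves bounding one-handles and the symmetry factor $1/|\mathrm{Sym}(\gamma)|$ — and the passage between the orbit of the formal sum $\gamma$ and that of the ordered tuple $(\gamma_1,\dots,\gamma_k)$, but these are standard in Mirzakhani's setup.
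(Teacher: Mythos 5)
Your proposal is correct and follows the same route the paper indicates: it deduces the statement directly from Mirzakhani's integration formula (Theorem \ref{theo:count_integral}) and the polynomiality of the Weil--Petersson volumes (Theorem \ref{theo:vol pol}), with the simplex integration carried out explicitly. The only ingredient you use beyond the paper's stated Theorem \ref{theo:vol pol} is the rationality (no powers of $\pi$) of the top-degree coefficients of the volume polynomials, which is indeed needed for $c(\gamma) \in \mathbf{Q}_{>0}$ and which you correctly attribute to Mirzakhani's identification of those coefficients with $\psi$-class intersection numbers.
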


The following theorem, corresponding to Theorem 1.2 in \cite{Mir08b}, describes the dependency of the leading coefficient $n_\gamma(X)$ on the rational multi-curve $\gamma$ and the hyperbolic surfaces $X$.\\

\begin{theorem}\cite[Theorem 1.2]{Mir08b}
	\label{theo:lead}
	For every rational multi-curve $\gamma$ on $S_{g,n}$ and every $X \in \mathcal{M}_{g,n}$,
	\[
	n_\gamma(X) = \frac{c(\gamma) \cdot B(X)}{b_{g,n}}.
	\]
\end{theorem}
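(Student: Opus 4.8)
The key idea is to combine the pointwise asymptotics of Theorem 1.2 with the averaged asymptotics of Proposition 1.6, using an integration argument over $\mathcal{M}_{g,n}$. From Theorem 1.2 we know that $s(X,\gamma,L)/L^{6g-6+2n} \to n_\gamma(X)$ pointwise as $L \to \infty$, and from Proposition 1.6 we know that $P(L,\gamma)/L^{6g-6+2n} \to c(\gamma)$. So if we could pass the limit through the integral $P(L,\gamma) = \int_{\mathcal{M}_{g,n}} s(X,\gamma,L)\, d\widehat\mu_{\mathrm{wp}}(X)$, we would get $c(\gamma) = \int_{\mathcal{M}_{g,n}} n_\gamma(X)\, d\widehat\mu_{\mathrm{wp}}(X)$, which is a relation of a similar flavor but not yet the claimed pointwise identity. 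To get the sharper statement I would instead show that $n_\gamma(X)$ is a \emph{constant multiple} of $B(X)$, with the constant then pinned down by integrating.

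First I would recall (from Mirzakhani's work, specifically the ideas underlying the proof of Theorem 1.2) that the leading coefficient arises from a measure-theoretic limit: the scaled counting measures $\frac{1}{L^{6g-6+2n}} \sum_{\alpha \in \mathrm{Mod}_{g,n}\cdot\gamma} \delta_{\alpha/L}$ on $\mathcal{ML}_{g,n}$ converge, as $L \to \infty$, to $c \cdot \mu_{\mathrm{Thu}}$ for some constant $c = c'(\gamma) > 0$ depending only on $\gamma$ (this uses Thurston's description of $\mathcal{ML}_{g,n}$ with its piecewise-linear integral structure and the mapping-class-group equidistribution of the orbit of $\gamma$). Pairing this weak-* convergence with the length function $\ell_\bullet(X)$ — more precisely, testing against the indicator of the set $\{\lambda : \ell_\lambda(X) \le 1\}$, whose boundary is $\mu_{\mathrm{Thu}}$-null — yields
\[
n_\gamma(X) = \lim_{L\to\infty} \frac{s(X,\gamma,L)}{L^{6g-6+2n}} = c'(\gamma) \cdot \mu_{\mathrm{Thu}}(\{\lambda \in \mathcal{ML}_{g,n} : \ell_\lambda(X) \le 1\}) = c'(\gamma)\cdot B(X).
\]
The homogeneity $\ell_{t\lambda}(X) = t\,\ell_\lambda(X)$ and the scaling $\mu_{\mathrm{Thu}}(t\cdot A) = t^{6g-6+2n}\mu_{\mathrm{Thu}}(A)$ are what make the exponent $6g-6+2n$ appear and what make the limiting constant independent of $X$.

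Next I would integrate this identity over $\mathcal{M}_{g,n}$ against $\widehat\mu_{\mathrm{wp}}$. Since $B$ is integrable (Proposition 1.4) with $\int_{\mathcal{M}_{g,n}} B\, d\widehat\mu_{\mathrm{wp}} = b_{g,n} < \infty$, we get $\int_{\mathcal{M}_{g,n}} n_\gamma(X)\, d\widehat\mu_{\mathrm{wp}}(X) = c'(\gamma)\, b_{g,n}$. On the other hand, combining the pointwise limit of Theorem 1.2 with the limit of Proposition 1.6 — and justifying the interchange of limit and integral, e.g.\ by a dominated-convergence or monotone-type argument using that $L \mapsto s(X,\gamma,L)$ is non-decreasing and that $P(L,\gamma)$ is a genuine polynomial (so the normalized functions are uniformly controlled) — gives $\int_{\mathcal{M}_{g,n}} n_\gamma(X)\, d\widehat\mu_{\mathrm{wp}}(X) = c(\gamma)$. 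Hence $c'(\gamma) = c(\gamma)/b_{g,n}$, and substituting back yields $n_\gamma(X) = c(\gamma)\cdot B(X)/b_{g,n}$.

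\textbf{Main obstacle.} The delicate point is the justification of passing the limit inside the integral to identify $\int n_\gamma\, d\widehat\mu_{\mathrm{wp}}$ with $c(\gamma)$: $\mathcal{M}_{g,n}$ is non-compact, $s(X,\gamma,L)$ blows up near the cusp, and $n_\gamma$ (like $B$) is a proper function, so one cannot naively dominate $s(X,\gamma,L)/L^{6g-6+2n}$ by an integrable function uniformly in $L$ without input about the cusp behavior. The clean way around this is to avoid the interchange entirely: establish $n_\gamma = c'(\gamma)\cdot B$ first by the equidistribution argument above, then integrate that identity directly (which is legitimate since $B \in L^1$), and only afterwards invoke Proposition 1.6 to evaluate the constant. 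This reduces the whole theorem to Mirzakhani's equidistribution of measured lamination orbits together with the already-cited integrability of $B$.
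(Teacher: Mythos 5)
The statement you are proving is quoted by this paper from \cite[Theorem 1.2]{Mir08b} without proof, so the comparison is with Mirzakhani's original argument — which your plan essentially reproduces: scaled orbit-counting measures on $\mathcal{ML}_{g,n}$ converge to a multiple of $\mu_{\text{Thu}}$, pair this with the unit length ball $\{\lambda : \ell_\lambda(X)\leq 1\}$ (whose boundary is $\mu_{\text{Thu}}$-null by the scaling property), then determine the constant by integrating over $\mathcal{M}_{g,n}$ and invoking Proposition \ref{prop:mir_freq}.

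There is, however, a genuine gap at the point where you claim to ``avoid the interchange entirely.'' Two related problems. First, the equidistribution statement you ``recall'' — that $\frac{1}{L^{6g-6+2n}}\sum_{\alpha\in\mathrm{Mod}_{g,n}\cdot\gamma}\delta_{\alpha/L}$ converges to $c'(\gamma)\,\mu_{\text{Thu}}$ with a single well-defined constant $c'(\gamma)$ — is essentially the content of Theorems \ref{theo:mir_count} and \ref{theo:lead} themselves, so citing it is close to circular. What comes cheaply (local boundedness of the family, $\mathrm{Mod}_{g,n}$-invariance, absolute continuity of limit points, Masur's ergodicity) is only that every \emph{subsequential} limit is $c_*\,\mu_{\text{Thu}}$ for some $c_*\geq 0$ that a priori depends on the subsequence; in Mirzakhani's proof the uniqueness of $c_*$ is obtained exactly by the integration over moduli space that you postpone. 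Second, even granting $n_\gamma = c'(\gamma)\,B$ pointwise, evaluating the constant from Proposition \ref{prop:mir_freq} requires identifying $\int_{\mathcal{M}_{g,n}} n_\gamma\, d\widehat{\mu}_{\text{wp}}$ with $c(\gamma)=\lim_{L}P(L,\gamma)/L^{6g-6+2n}$, and that \emph{is} an interchange of limit and integral: since $s(\cdot,\gamma,L)\geq 0$, Fatou's lemma gives only $c'(\gamma)\,b_{g,n}\leq c(\gamma)$, and the reverse inequality needs a bound on $s(X,\gamma,L)/L^{6g-6+2n}$, uniform in all large $L$, by a $\widehat{\mu}_{\text{wp}}$-integrable function of $X$. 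This is precisely what Mirzakhani's Proposition 3.6 in \cite{Mir08b} (the $\prod_{\gamma:\ell_\gamma(X)\leq\epsilon} \ell_\gamma(X)^{-1}$ bound, integrable near the cusp because $d\mu_{\text{wp}}$ behaves like $\ell\, d\ell\, d\tau$ there), or the sharper Proposition \ref{prop:count_L2_bound} of this paper, supplies. So the cusp estimate cannot be bypassed; once it is in place, dominated convergence pins every subsequential constant to $c(\gamma)/b_{g,n}$ and the argument closes as you intend.
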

$ $

The constant $c(\gamma) \in \mathbf{Q}_{>0}$ is usually referred to as the frequency of $\gamma$, or more precisely, as the frequency of rational multi-curves of the same topological type as $\gamma$. Notice that 
\[
\int_{\mathcal{M}_{g,n}} \eta_{\gamma}(X) \ d\widehat{\mu}_{\text{wp}}(X) = c(\gamma).
\]
We will refer to the function $B \colon \mathcal{M}_{g,n} \to \mathbf{R}$ as the \textit{Mirzakhani function}.\\

Recently, see \cite{EMM19}, Eskin, Mirzakhani, and Mohammadi improved Theorem \ref{theo:mir_count} by obtaining a power saving error term for the asymptotics. Their methods are very different from Mirzakhani's original work and rely on the exponential mixing rate of the Teichmüller geodesic flow.\\

The Mirzakhani function plays a crucial role in the study of the moduli space $\mathcal{M}_{g,n}$ from the perspective of hyperbolic geometry:\\
\begin{enumerate}
	\item As seen in Theorem \ref{theo:lead}, $B(X)$ describes the dependency on the hyperbolic metric $X$ of the leading coefficient of the asymptotics of counting problems for simple closed hyperbolic geodesics.\\
	
	\item The bundle $\pi \colon P^1\mathcal{M}_{g,n} \to \mathcal{M}_{g,n}$ of length $1$ measured geodesic laminations over moduli space carries a unique (up to scaling) Lebesgue class measure $\nu$ invariant and ergodic with respect to the earthquake flow. We refer to $\nu$ as the \textit{Mirzakhani measure}. The pushforward $\pi_* \nu$ is absolutely continuous with respect to the Weil-Petersson measure $\widehat{\mu}_{\text{wp}}$ and its density is precisely given by the Mirzakhani function $B \colon \mathcal{M}_{g,n} \to \mathbf{R}_{>0}$, i.e., $\pi_* \nu = B(X) \ d \widehat{\mu}_{\text{wp}}(X)$. See \cite{Mir08a} for details.\\
	
	\item The function $B \colon \mathcal{M}_{g,n} \to \mathbf{R}_{>0}$ is the asymptotic distribution with respect to the Weil-Petersson measure $\widehat{\mu}_{\text{wp}}$ of random hyperbolic surfaces constructed in the following way. Fix a pair of pants decomposition $\mathcal{P} := \{\gamma_1,\dots,\gamma_{3g-3+n}\}$ of $S_{g,n}$. Let $L > 0$ be arbitrary. Pick uniformly at random a point from the simplex of vectors $(\ell_i)_{i=1}^{3g-3+n} \in \mathbf{R}^{3g-3+n}$ with positive entries satifying $\ell_1 + \cdots + \ell_{3g-3+n} \leq L$. Consider the pairs of pants in the decomposition induced by $\mathcal{P}$ on $S_{g,n}$ as hyperbolic pairs of pants with cuff lengths given by $\ell(\gamma_i) = \ell_i$ for all $i = 1,\dots,3g-3+n$. Choose twist parameters $0 \leq \tau_i < \ell_i$ uniformly at random for every $i=1,\dots,3g-3+n$ and glue the hyperbolic pairs of pants according to these twist parameters to get a random hyperbolic surface on $\mathcal{M}_{g,n}$. Let $\widehat{\mu}^L_{\mathcal{P},*}$ be the probability measure on $\mathcal{M}_{g,n}$ describing such random hyperbolic surface. Then, as $L \to \infty$, 
	\[
	\widehat{\mu}^L_{\mathcal{P},*} \to \frac{B(X) \ d\widehat{\mu}_{\text{wp}}}{b_{g,n}}.
	\]
	Several other similar constructions, for instance, choosing lengths uniformly at random from the codimension $1$ simplex  of vectors $(\ell_i)_{i=1}^{3g-3+n} \in \mathbf{R}^{3g-3+n}$ with positive entries satisfying $\ell_1,\dots,\ell_{3g-3+n} = L$, or considering simple closed multi-curves more general than a pair of pants decomposition, exhibit the same behavior. These results are all consequences of the ergodicity of the earthquake flow with respect to the Mirzakhani measure $\nu$ on $P^1\mathcal{M}_{g,n}$. See \cite{Mir07b} and \cite{AH19b} for details.\\
\end{enumerate}

It is also interesting to note that $b_{g,n}$, the integral of $B$ with respect to the Weil-Petersson measure $\widehat{\mu}_{\text{wp}}$ on $\mathcal{M}_{g,n}$, corresponds to the Masur-Veech measure of the principal stratum of $Q\mathcal{M}_{g,n}$, the moduli space of connected, integrable, meromorphic quadratic differentials of genus $g$ with $n$ marked points. See Theorem 1.4 in \cite{Mir08a} or Corollary 1.4 in \cite{AH19a} for details.\\

Aside from Mirzakhani's original understanding of the function $B \colon \mathcal{M}_{g,n} \to \mathbf{R}$, roughly described in Proposition \ref{prop:Borig}, not much is known about the behavior of the Mirzakhani function, both locally and globally. The main purpose of this paper is to strengthen our understanding of the global behavior of the Mirzakhani function and to describe in more depth its connections to the statistics of counting problems for simple closed hyperbolic geodesics.\\

\textit{Main results.} The goal of the first part of this paper is to better understand the global behavior of the Mirzakhani function. Consider the function $R \colon \mathbf{R}_+ \to \mathbf{R}_+$ given by
\begin{equation}
\label{eq:fnR}
R(x) := \frac{1}{x \cdot | \text{log}(x)|}.
\end{equation}
The following bounds coarsely describe the behavior of the Mirzakhani function near the cusp in terms of the lengths of short simple closed hyperbolic geodesics.\\

\begin{theorem}
	\label{theo:main_res_1}
	For all sufficiently small $\epsilon >0$ there are constants $C_1, C_2 > 0$ such that for all $X \in \mathcal{M}_{g,n}$,
	\[
	C_1 \cdot \prod_{\gamma \colon \ell_{\gamma}(X) \leq \epsilon} R(\ell_\gamma(X)) \leq	B(X) \leq C_2 \cdot \prod_{\gamma \colon \ell_{\gamma}(X) \leq \epsilon} R(\ell_\gamma(X)) ,
	\]
	where the products range over all simple closed geodesics $\gamma$ in $X$ of length $\leq \epsilon$.
\end{theorem}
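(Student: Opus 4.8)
The plan is to localize the estimate near the cusp of $\mathcal{M}_{g,n}$ using the thick-thin decomposition and Mirzakhani's integration formula, and then to refine Mirzakhani's original bounds (Proposition \ref{prop:Borig}) into a multiplicative asymptotic. First I would fix a small $\epsilon > 0$ below the Margulis constant and stratify $\mathcal{M}_{g,n}$ according to the set $\sigma(X) := \{\gamma : \ell_\gamma(X) \leq \epsilon\}$ of short simple closed geodesics; by collar lemma considerations this is always a (possibly empty) simplex of disjoint simple closed curves, and there are finitely many topological types of such $\sigma$. On the thick part, where $\sigma(X) = \emptyset$, the function $B$ is continuous and bounded above and below by positive constants (Proposition \ref{prop:Borig}, properness), so the claimed bounds hold trivially with the empty product equal to $1$. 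The real content is a thin-part estimate: for each fixed multicurve $\sigma = \gamma_1 \cup \cdots \cup \gamma_k$, I want to show that on the region where exactly these curves are short, $B(X) \asymp \prod_{i=1}^k R(\ell_{\gamma_i}(X))$ with implied constants depending only on $\sigma$ (and $\epsilon$, $g$, $n$); taking the worst constants over the finitely many $\sigma$ then gives the theorem.

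The key tool for the thin-part estimate is Mirzakhani's unfolding/integration formula together with her description of $B$ in terms of the piecewise-polynomial structure of the Thurston volume on train-track or Dehn–Thurston coordinates. Concretely, cutting $X$ along the short curves $\gamma_i$ yields pieces whose moduli are in the thick part, and the Thurston measure factorizes: a measured lamination $\lambda$ with $\ell_\lambda(X) \leq 1$ decomposes into its restriction to each thick piece plus intersection numbers with the $\gamma_i$. The length function behaves, along the pinching $\ell_{\gamma_i}(X) = \ell_i \to 0$, like $\ell_\lambda(X) \approx \sum (\text{contributions from pieces}) + \sum_i i(\lambda,\gamma_i)^2 \cdot \ell_i \cdot \tfrac{1}{2}$ up to the collar expansion — the point being that crossing the collar around $\gamma_i$ costs roughly $2 |\log \ell_i|$ per intersection, while twisting around $\gamma_i$ is a length-$\ell_i$ direction. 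Integrating the resulting volume over the $\ell_i \to 0$ regime produces, for each short curve, a factor $\int_0^{1/\ell_i} \frac{dt}{\ell_i |\log \ell_i|}$-type contribution, i.e.\ exactly $R(\ell_i) = \tfrac{1}{\ell_i |\log \ell_i|}$, with the remaining integral over the thick pieces contributing a bounded factor. I would make this rigorous by adapting the estimates in Section 3 of \cite{Mir08b}: Mirzakhani already proves the upper bound $B(X) \ll \prod R(\ell_\gamma(X))$ implicitly in her integrability argument, so the new work is mainly the matching lower bound, obtained by exhibiting an explicit positive-measure family of laminations of length $\leq 1$ (e.g.\ those supported in a single thick piece together with controlled twisting in each collar).

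The main obstacle I anticipate is controlling the interaction between the short curves and the thick pieces \emph{uniformly}: the constant $C_1, C_2$ must not degenerate as the thick-part moduli of the cut pieces themselves approach their own cusps — but this is precisely handled by induction on the complexity $3g-3+n$, since a thick piece that is itself developing a short curve is absorbed by enlarging $\sigma$. A secondary technical point is that the $|\log \ell|$ factor is not exactly the collar half-width but differs from $\operatorname{arcsinh}(1/\sinh(\ell/2)) \sim 2|\log \ell|$ by a bounded multiplicative amount, which only affects the constants, not the shape $R$. A final subtlety: when two short curves cobound a pair of pants with a third boundary (or a cusp), one must check that the product structure of the Thurston measure genuinely separates their contributions; this follows from the fact that $i(\lambda,\gamma_i)$ and $i(\lambda,\gamma_j)$ are independent coordinates in Dehn–Thurston coordinates adapted to $\sigma$. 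With these points addressed, assembling the finitely many local estimates yields the global two-sided bound.
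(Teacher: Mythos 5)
Your overall strategy --- pass to Dehn--Thurston coordinates adapted to the short curves, compare hyperbolic length with a combinatorial length whose crossing cost per intersection is the collar width $w(\ell_{\gamma_i}(X)) \sim |\log \ell_{\gamma_i}(X)|$ and whose twisting cost is $\ell_{\gamma_i}(X)$, and then estimate the Thurston measure of the unit ball to extract a factor $\asymp R(\ell_{\gamma_i}(X))$ per short curve and a bounded factor from the rest --- is essentially the proof of Theorem \ref{theo:mir_new_bound} in the paper. The genuine problem is that your division of labor is backwards. Proposition 3.6 of \cite{Mir08b} (Proposition \ref{mir_original_bound} here) proves the \emph{lower} bound with the factor $R(\ell_\gamma)$ and only the weaker upper bound $B(X) \leq C \prod 1/\ell_\gamma(X)$; the sharp upper bound is not implicit in Mirzakhani's integrability argument, and it is precisely the new content of the theorem. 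So if, as you propose, you treat the upper bound as already available and devote the new work to constructing a positive-measure family for the lower bound, you end up reproving the known half and leaving the new half unproved. The repair is already contained in your own heuristic: in the upper-bound direction you must keep the weight $w(\ell_{\gamma_i}(X))$ on the intersection-number coordinate $m_i$ (not only the weight $\ell_{\gamma_i}(X)$ on the twist coordinate), i.e.\ use the two-sided comparison of Mirzakhani's Proposition 3.5 (Lemma \ref{length_comparison_lemma}, extended to all of $\mathcal{ML}_{g,n}$ in Corollary \ref{length_comparison_lemma_ML}) and compute the Lebesgue measure of the simplex $\sum_i \bigl(m_i \, w(\ell_{\gamma_i}(X)) + |t_i| \, \ell_{\gamma_i}(X)\bigr) \leq 1$, which equals $\frac{2^N}{N!} \prod_i \bigl(\ell_{\gamma_i}(X) \, w(\ell_{\gamma_i}(X))\bigr)^{-1}$; then $w(x) \geq \tfrac12 |\log x|$ for small $x$ gives the $R$ factors, and compactness of $[\epsilon, L_{g,n}]$ bounds the contribution of the remaining cuffs. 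Note also that your displayed approximation $\ell_\lambda(X) \approx \cdots + \sum_i i(\lambda,\gamma_i)^2 \cdot \ell_i/2$ is not correct; the correct statement is the linear one you give in words (crossing cost $\asymp w(\ell_i)$, twisting cost $\asymp \ell_i$).

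Two further organizational points. The stratification by $\sigma(X)$ together with an induction on complexity is unnecessary for uniformity: by Bers' theorem (Theorem \ref{theo:bers}) the short curves on any $X$ can be completed to a pants decomposition all of whose cuffs have length at most $L_{g,n}$, so the comparison constant of Lemma \ref{length_comparison_lemma} and the bounded factors from the non-short cuffs are uniform over all of $\mathcal{M}_{g,n}$ at once; moreover, within a stratum where exactly $\sigma$ is short there is no issue of thick pieces developing their own short curves, since every curve of length $\leq \epsilon$ already lies in $\sigma(X)$. Finally, for the lower bound you may simply quote Proposition \ref{mir_original_bound} rather than construct an explicit family of laminations.
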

$ $

\begin{remark}
	In Theorem \ref{theo:main_res_1} and Proposition \ref{prop:main_res_3} below, the values of $\epsilon > 0$ considered are small enough so that on any hyperbolic surface no two simple closed geodesics of length $\leq \epsilon$ intersect. In particular, the products involved in the statements of these results range over a finite collection of pairwise disjoint simple closed geodesics.\\
\end{remark}

The lower bound in Theorem \ref{theo:main_res_1} and a weaker upper bound are proved in Proposition 3.6 of \cite{Mir08b}. Our proof of Theorem \ref{theo:main_res_1} follows the same ideas as the proof of Proposition 3.6 in \cite{Mir08b} but using more precise estimates.\\

As a direct consequence of Theorem \ref{theo:main_res_1} we obtain the following result.\\

\begin{theorem}
	\label{theo:main_res_2}
	The Mirzakhani function $B \colon \mathcal{M}_{g,n} \to \mathbf{R}_{>0}$ is square integrable with respect to the Weil-Petersson measure $\widehat{\mu}_{\text{wp}}$ on $\mathcal{M}_{g,n}$, i.e.,
	\[
	a_{g,n} := \int_{\mathcal{M}_{g,n}} B(X)^2 \ d\widehat{\mu}_{\text{wp}}(X) < \infty
	\]
\end{theorem}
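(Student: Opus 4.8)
The statement follows from Theorem \ref{theo:main_res_1} together with an understanding of how the Weil-Petersson volume form degenerates near the cusp of $\mathcal{M}_{g,n}$, so the strategy is to reduce the square-integrability of $B$ to a collection of elementary one-variable integrability estimates in Fenchel-Nielsen coordinates. First I would fix a small $\epsilon > 0$ as in Theorem \ref{theo:main_res_1} and decompose $\mathcal{M}_{g,n}$ into the ``thick part'' $\mathcal{M}_{g,n}^{\geq \epsilon}$, where the systole is $\geq \epsilon$, and finitely many ``thin parts'' indexed by the topological type of the (necessarily simple, pairwise disjoint) collection of short geodesics. On the thick part, $B$ is continuous and $B \equiv C_2$ bounds it above by Theorem \ref{theo:main_res_1}, and $\mathcal{M}_{g,n}^{\geq \epsilon}$ has finite Weil-Petersson volume (it is compact if $n = 0$, and in general $\widehat\mu_{\text{wp}}(\mathcal{M}_{g,n}) < \infty$), so $\int_{\mathcal{M}_{g,n}^{\geq\epsilon}} B^2 \, d\widehat\mu_{\text{wp}} < \infty$ is immediate. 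The real content is the contribution of each thin part.

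\emph{Controlling the thin parts.} Fix a simple closed multi-curve $\sigma = \gamma_1 \cup \dots \cup \gamma_k$ on $S_{g,n}$; the corresponding thin part is, up to finite cover, parametrized by Fenchel-Nielsen coordinates in which the cuff lengths $\ell_i = \ell_{\gamma_i}$ range over $(0,\epsilon)$, the twists along $\gamma_i$ range over $[0,\ell_i)$, and the remaining $6g-6+2n-2k$ coordinates range over a set of finite volume (a thick part of a lower-complexity moduli space of surfaces-with-boundary). Wolpert's formula gives $d\widehat\mu_{\text{wp}} = \prod_{i=1}^{k} d\ell_i \wedge d\tau_i$ times the Weil-Petersson form of the smaller moduli space, so integrating out the twists $\tau_i$ contributes a factor $\prod_{i=1}^{k} \ell_i$. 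On this thin part Theorem \ref{theo:main_res_1} gives $B(X)^2 \leq C_2^2 \prod_{i=1}^{k} R(\ell_i)^2 = C_2^2 \prod_{i=1}^k \bigl( \ell_i^2 (\log \ell_i)^2 \bigr)^{-1}$, using that a surface in this thin part has \emph{exactly} the $\gamma_i$ as its geodesics of length $\leq\epsilon$ (shrinking $\epsilon$ if needed). Therefore the thin-part integral is bounded, up to a finite constant coming from the volume of the smaller moduli space, by
\[
\prod_{i=1}^{k} \int_0^{\epsilon} \frac{\ell_i}{\ell_i^2 \,(\log \ell_i)^2} \, d\ell_i = \prod_{i=1}^{k} \int_0^{\epsilon} \frac{d\ell_i}{\ell_i \,(\log \ell_i)^2},
\]
and each factor is finite since $\int_0^{\epsilon} \frac{dx}{x (\log x)^2} = \bigl[ -\tfrac{1}{\log x}\bigr]_0^{\epsilon} = \tfrac{1}{|\log \epsilon|} < \infty$. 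Summing over the finitely many topological types of $\sigma$ gives $a_{g,n} < \infty$.

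\emph{Main obstacle.} The one-variable integral above is the crux: the single factor of $\ell_i$ from the twist parameter exactly cancels \emph{one} of the two powers of $\ell_i$ in $R(\ell_i)^2$, leaving $\ell_i^{-1}(\log\ell_i)^{-2}$, which is integrable near $0$ precisely because of the logarithmic factor in $R$. This is why the sharp form of the upper bound in Theorem \ref{theo:main_res_1}, with the $|\log(\ell_\gamma)|$ in the denominator of $R$, is essential — the weaker bound in \cite{Mir08b} (which, as remarked, lacks the precise logarithmic control) would give $\int_0^\epsilon \ell_i^{-1}\,d\ell_i = \infty$ and fail to prove square-integrability, even though it sufficed for plain integrability (where the relevant integral is $\int_0^\epsilon \ell_i \cdot \ell_i^{-1} |\log\ell_i|^{-1} d\ell_i = \int_0^\epsilon |\log \ell_i|^{-1} d\ell_i < \infty$). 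The remaining technical points — checking that the finite-cover issue in passing to Fenchel-Nielsen coordinates only affects constants, and that the ``transverse'' moduli space of bordered surfaces has finite Weil-Petersson volume with the relevant thick part bounded — are standard and I would cite Wolpert's formula and Mirzakhani's volume recursions rather than reprove them.
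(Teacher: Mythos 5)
Your proposal is correct and follows essentially the same route as the paper: bound $B^2$ by the sharp upper bound of Theorem \ref{theo:main_res_1}, pass to Fenchel--Nielsen coordinates on the thin parts, use Wolpert's magic formula so that integrating the twists contributes the factor $\prod_i \ell_i$, and conclude with the one-variable estimate $\int_0^\epsilon \ell^{-1}(\log \ell)^{-2}\, d\ell < \infty$. The only cosmetic difference is bookkeeping: the paper stratifies by the number $k$ of short geodesics and uses Bers's theorem to complete them to an $L_{g,n}$-bounded pants decomposition (so everything happens in finitely many bounded Fenchel--Nielsen boxes), whereas you stratify by the topological type of the short multi-curve and quote finiteness of the transverse bordered moduli volume, which amounts to the same standard input.
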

$ $

Let $m_{g,n} := \widehat{\mu}_{\text{wp}}(\mathcal{M}_{g,n})$. Theorem \ref{theo:main_res_2} states that the random variable $$B \colon \mathcal{M}_{g,n} \allowbreak \to \mathbf{R}_{>0}$$ defined on the probability space $(\mathcal{M}_{g,n}, \widehat{\mu}_{\text{wp}}/m_{g,n})$ has finite second moment. In particular, one can consider its variance
\[
\mathbf{Var}(B(X)) := \mathbf{E}(B(X)^2) - \mathbf{E}(B(X))^2 = \frac{a_{g,n}}{m_{g,n}} - \frac{b_{g,n}^2}{m_{g,n}^2} < \infty.
\]
$ $

The focus of the second part of this paper is to relate the newly acquired knowledge of the global behavior of the Mirzakhani function to the statistics of counting problems for simple closed hyperbolic geodesics. We are interested in studying the asymptotic behavior of the covariances of the counting functions $s(X,\gamma,L)$ defined on $\mathcal{M}_{g,n}$ for different integral multi-curves $\gamma$ on $S_{g,n}$ as $L \to \infty$. To this end, and inspired by the definition of the frequencies $c(\gamma)$ in Proposition \ref{prop:mir_freq}, for every pair of integral multi-curves $\gamma_1,\gamma_2$ on $S_{g,n}$, we define their \textit{joint frequency} $c(\gamma_1,\gamma_2)$ to be the limit
\begin{equation}
\label{eq:joint_freq}
c(\gamma_1,\gamma_2) := \lim_{L \to \infty} \frac{1}{L^{12g-12+4n}} \int_{\mathcal{M}_{g,n}} s(X,\gamma_1,L) \cdot s(X,\gamma_2,L) \ d\widehat{\mu}_{\text{wp}}(X).
\end{equation}
$ $

The main result of the second part of this paper is the following theorem, which establishes the existence of the joint frequencies $c(\gamma_1,\gamma_2)$ and provides a formula relating them to the frequencies $c(\gamma_1)$ and $c(\gamma_2)$ through the constants $b_{g,n}$ and $a_{g,n}$.\\

\begin{theorem}
	\label{theo:main_res_4}
	For any pair of integral multi-curves $\gamma_1, \gamma_2$ on $S_{g,n}$, the limit in the definition (\ref{eq:joint_freq}) of $c(\gamma_1,\gamma_2)$ exists and moreover,
	\[
	c(\gamma_1,\gamma_2) = \frac{a_{g,n}}{b_{g,n}^2} \cdot c(\gamma_1) \cdot c(\gamma_2).
	\]
\end{theorem}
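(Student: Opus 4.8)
The plan is to mirror Mirzakhani's proof of Theorem~\ref{theo:lead} (equivalently, Corollary~5.2 and Theorem~1.2 in \cite{Mir08b}), but carried out at the level of the \emph{product} of two counting functions rather than a single one. The starting point is the now-classical identity, valid for an integral multi-curve $\gamma$, expressing $s(X,\gamma,L)$ as a sum over the mapping-class-group orbit and the fact that, after averaging against $\widehat\mu_{\mathrm{wp}}$, Mirzakhani's integration formula turns this into an integral of a product of Weil--Petersson volume polynomials over the space of metrics on the complementary subsurface. Concretely, one uses that $L^{-(6g-6+2n)} s(X,\gamma,L) \to c(\gamma)\,B(X)/b_{g,n}$ pointwise (Theorem~\ref{theo:lead}), and seeks to upgrade this to an $L^2$-type statement. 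So the first step is to write
\[
\frac{1}{L^{12g-12+4n}}\int_{\mathcal{M}_{g,n}} s(X,\gamma_1,L)\, s(X,\gamma_2,L)\ d\widehat\mu_{\mathrm{wp}}(X),
\]
and observe that the integrand converges pointwise, as $L\to\infty$, to $\big(c(\gamma_1)c(\gamma_2)/b_{g,n}^2\big) B(X)^2$.

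The heart of the argument is then a \emph{dominated convergence} step: we need an $L$-uniform integrable (indeed $L^1$) upper bound for $L^{-(6g-6+2n)} s(X,\gamma_i,L)$ so that the pointwise limit can be integrated term by term, giving
\[
c(\gamma_1,\gamma_2)=\frac{c(\gamma_1)c(\gamma_2)}{b_{g,n}^2}\int_{\mathcal{M}_{g,n}} B(X)^2\ d\widehat\mu_{\mathrm{wp}}(X)=\frac{a_{g,n}}{b_{g,n}^2}\,c(\gamma_1)\,c(\gamma_2),
\]
which is exactly the claimed formula, and where finiteness of $a_{g,n}$ is precisely Theorem~\ref{theo:main_res_2}. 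To produce such a bound one shows that there is a constant $K_i>0$, independent of $L\ge 1$ and $X$, with
\[
\frac{s(X,\gamma_i,L)}{L^{6g-6+2n}}\le K_i\, B(X)\qquad\text{for all }X\in\mathcal{M}_{g,n},\ L\ge 1.
\]
This is the natural uniform companion of Theorem~\ref{theo:mir_count}: $s(X,\gamma,L)$ counts points of the orbit $\mathrm{Mod}_{g,n}\cdot\gamma$ in the ball of radius $L$ in length, and such counts are controlled above, uniformly, by $L^{6g-6+2n} B(X)$ because $B(X)$ is (by definition) the Thurston measure of the unit length ball and the orbit sits inside $\mathcal{ML}_{g,n}(\mathbf{Z})$ with the appropriate density; this uniform bound is implicit in Mirzakhani's work (it is how the dominated-convergence arguments behind Proposition~\ref{prop:mir_freq} are run) and only needs to be made explicit here. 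Granting the two single-curve bounds, the product $L^{-(6g-6+2n)}s(X,\gamma_1,L)\cdot L^{-(6g-6+2n)}s(X,\gamma_2,L)$ is dominated by $K_1K_2\,B(X)^2$, which is $\widehat\mu_{\mathrm{wp}}$-integrable by Theorem~\ref{theo:main_res_2}, so the Dominated Convergence Theorem applies and yields the formula.

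I would organize the write-up in three short steps: (i) record the pointwise limit of the normalized product of counting functions, citing Theorem~\ref{theo:lead}; (ii) prove (or cite, then make uniform) the bound $s(X,\gamma_i,L)\le K_i L^{6g-6+2n} B(X)$ uniformly in $L\ge 1$, e.g.\ by embedding the orbit count into a lattice-point count in $\mathcal{ML}_{g,n}$ and comparing balls of radius $L$ and radius $1$ via the homogeneity $\mu_{\mathrm{Thu}}(tA)=t^{6g-6+2n}\mu_{\mathrm{Thu}}(A)$; (iii) invoke Theorem~\ref{theo:main_res_2} and dominated convergence to pass the limit inside the integral. The main obstacle, and the only place real work is needed beyond bookkeeping, is step~(ii): obtaining the domination with constants genuinely independent of both $X$ and $L$. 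The subtlety is near the cusp of $\mathcal{M}_{g,n}$, where $B(X)\to\infty$ and the geometry degenerates; one must check that the elementary lattice-point-counting comparison (discretization error controlled by the boundary of the length ball) does not produce an $X$-dependent constant there. Theorem~\ref{theo:main_res_1} — the two-sided control of $B(X)$ by $\prod_{\ell_\gamma(X)\le\epsilon}R(\ell_\gamma(X))$ — is exactly the tool that tames this: it lets one absorb the cusp behavior into the factor $B(X)$ on the right-hand side rather than into the constant, so that $K_i$ can be chosen uniformly. Once step~(ii) is in place, the rest is a routine application of dominated convergence.
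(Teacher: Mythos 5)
Your steps (i) and (iii) are exactly the paper's argument: pointwise convergence of the normalized product to $\bigl(c(\gamma_1)c(\gamma_2)/b_{g,n}^2\bigr)B(X)^2$ via Theorems \ref{theo:mir_count} and \ref{theo:lead}, then dominated convergence against an integrable majorant (the paper uses $C\cdot F(X)^2$ with $F$ as in (\ref{eq:F}), which by Theorem \ref{theo:main_res_1} is the same, up to constants, as your $K_1K_2\,B(X)^2$). The gap is in step (ii), which you correctly identify as the only place where real work happens but then propose to settle by ``embedding the orbit count into a lattice-point count in $\mathcal{ML}_{g,n}$ and comparing balls via the homogeneity of $\mu_{\mathrm{Thu}}$,'' asserting the bound is implicit in Mirzakhani's work. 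That route fails: the full lattice-point count $b(X,L):=\#\{\alpha\in\mathcal{ML}_{g,n}(\mathbf{Z})\mid \ell_\alpha(X)\le L\}$ does \emph{not} satisfy any bound of the form $b(X,L)\le K\,L^{6g-6+2n}B(X)$ uniformly near the cusp (indeed $b(\cdot,L)\notin L^2(\mathcal{M}_{g,n},\widehat{\mu}_{\mathrm{wp}})$ for every fixed $L$; see Remark \ref{rem:b(X,L)_no_bound}). The reason is the thin rectangle phenomenon: if some simple closed geodesic $\gamma_i$ satisfies $w(\ell_{\gamma_i}(X))\gg L$, the number of integer points with $m_i=0$, $0\le t_i\le L/\ell_{\gamma_i}(X)$ is of order $L/\ell_{\gamma_i}(X)$, while the Thurston measure contributes only $L^2/\bigl(\ell_{\gamma_i}(X)\,w(\ell_{\gamma_i}(X))\bigr)$, so the boundary/discretization error is not uniformly controlled. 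Moreover, Theorem \ref{theo:main_res_1} cannot repair this, since the failure is in the lattice count itself, not in the estimate of $B(X)$; once you enlarge the orbit $\mathrm{Mod}_{g,n}\cdot\gamma$ to all of $\mathcal{ML}_{g,n}(\mathbf{Z})$ you have discarded exactly the information that makes the bound true.

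What saves the statement — and this is the content of Proposition \ref{prop:main_res_3} (= Proposition \ref{prop:count_L2_bound}), the key ingredient in the paper's proof — is an orbit-specific observation. Working in Dehn--Thurston coordinates adapted to a Bers pants decomposition $\mathcal{P}_X$ containing all geodesics of length $\le\epsilon$, and using the combinatorial length comparison (Lemma \ref{length_comparison_lemma}), one notes that if $w(\ell_{\gamma_i}(X))>CL$ then any $\alpha\in\mathrm{Mod}_{g,n}\cdot\eta$ with $\ell_\alpha(X)\le L$ cannot cross $\gamma_i$, so $m_i=0$, and since $\eta$ has at most $3g-3+n$ components with fixed weights, the twist coordinate $t_i$ (the weight of $\gamma_i$ as a component of $\alpha$) takes at most $3g-2+n$ values. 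This replaces the problematic factor $L/\ell_{\gamma_i}(X)$ by a constant and reduces the estimate to a truncated box count over the remaining coordinates, yielding $s(X,\eta,L)/L^{6g-6+2n}\le C\prod_{\gamma:\ell_\gamma(X)\le\epsilon}R(\ell_\gamma(X))$ uniformly in $X$ and $L\ge L_0$. With this in hand (and the square-integrability of the majorant, Proposition \ref{prop:F_L2}), your dominated convergence step goes through verbatim; without it, step (ii) as you describe it does not hold up.
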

$ $

A key ingredient in the proof of Theorem \ref{theo:main_res_4} is the following bound, interesting in its own right; this bound is obtained through similar methods as the upper bound in Theorem \ref{theo:main_res_1}. \\

\begin{proposition}
	\label{prop:main_res_3}
	For all sufficiently small $\epsilon >0$, there exist constants $C > 0$ and $L_0 > 0$ such that for all $L \geq L_0$, all $X \in \mathcal{M}_{g,n}$, and all integral multi-curves $\eta$ on $S_{g,n}$,
	\[
	\frac{s(X,\eta,L)}{L^{6g-6+2n}} \leq C \cdot \prod_{\gamma \colon \ell_{\gamma}(X) \leq \epsilon} R(\ell_\gamma(X)).
	\]
	where the product ranges over all simple closed geodesics $\gamma$ in $X$ of length $\leq \epsilon$.
\end{proposition}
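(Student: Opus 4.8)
The plan is to bound $s(X,\eta,L)$ uniformly over all integral multi-curves $\eta$ by comparing the counting function to the Mirzakhani function $B$ and then invoking the upper bound of Theorem \ref{theo:main_res_1}. The starting point is Mirzakhani's integration machinery: for a fixed topological type of multi-curve, the count $s(X,\eta,L)$ is governed by the Thurston measure of the dilated set $\{\lambda \in \mathcal{ML}_{g,n} : \ell_\lambda(X) \le L, [\lambda] \in \mathrm{Mod}_{g,n}\cdot \eta\}$, which by homogeneity of the Thurston measure scales like $L^{6g-6+2n}$. The crucial observation is that, summing over all mapping class group orbits in the integral lattice, one gets a bound of the shape $s(X,\eta,L) \le c \cdot L^{6g-6+2n}\cdot B(X)$ for $L$ large, where the constant $c$ depends only on $g,n$ and not on $\eta$. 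Roughly, each integral lattice point $\alpha \in \mathrm{Mod}_{g,n}\cdot \eta$ with $\ell_\alpha(X) \le L$ contributes a definite amount of Thurston measure (a unit-covolume cell of the integral lattice, rescaled) to the set counted by $L^{6g-6+2n} B(X)$, and these cells are essentially disjoint, so the number of such lattice points is controlled by $L^{6g-6+2n}B(X)$ up to a multiplicative constant. Making this precise is where the work lies: one needs the unit-covolume property of the integral points of $\mathcal{ML}_{g,n}$ with respect to $\mu_{\mathrm{Thu}}$ (due to Thurston, used repeatedly by Mirzakhani), together with a uniform-in-$X$ Lipschitz-type control on $\ell_\lambda(X)$ so that the rescaled cells stay inside the ball of radius slightly larger than $L$.

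Concretely, I would proceed as follows. First, recall that the set $\mathcal{ML}_{g,n}(\mathbf{Z})$ of integral measured laminations, intersected with $\{\ell_\lambda(X)\le L\}$, has cardinality asymptotic to $L^{6g-6+2n} B(X)$ by the lattice-point counting interpretation of $B$ — but more usefully, a one-sided bound $\#(\mathcal{ML}_{g,n}(\mathbf{Z})\cap \{\ell_\lambda(X)\le L\}) \le C' \cdot L^{6g-6+2n} B(X)$ valid for all $L \ge L_0$ and all $X$. One way to get this uniform one-sided bound is to cover $\{\lambda : \ell_\lambda(X)\le L\}$ by unit cells centered at integral points contained in $\{\lambda : \ell_\lambda(X) \le L+D\}$ for a universal constant $D$ (using that $\ell_\lambda(X)$ changes by a bounded amount under a bounded change in $\lambda$ measured in a fixed norm on a chart), and then using homogeneity to absorb the shift: $B_X(L+D) = (1+D/L)^{6g-6+2n} L^{6g-6+2n}B(X) \le 2^{6g-6+2n} L^{6g-6+2n}B(X)$ for $L \ge D$. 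Second, observe that for any integral multi-curve $\eta$, the orbit $\mathrm{Mod}_{g,n}\cdot \eta$ sits inside $\mathcal{ML}_{g,n}(\mathbf{Z})$, so $s(X,\eta,L) \le \#(\mathcal{ML}_{g,n}(\mathbf{Z})\cap\{\ell_\lambda(X)\le L\}) \le C' \cdot L^{6g-6+2n}B(X)$. Third, divide by $L^{6g-6+2n}$ and apply the upper bound of Theorem \ref{theo:main_res_1}, $B(X) \le C_2 \prod_{\gamma : \ell_\gamma(X)\le\epsilon} R(\ell_\gamma(X))$, to conclude with $C = C' C_2$.

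I expect the main obstacle to be establishing the uniform-in-$X$ one-sided lattice-point bound $\#(\mathcal{ML}_{g,n}(\mathbf{Z})\cap\{\ell_\lambda(X)\le L\}) \le C'\,L^{6g-6+2n}B(X)$. The issue is that as $X$ degenerates toward the boundary of moduli space, the geometry of the unit ball $\{\ell_\lambda(X)\le 1\}$ in $\mathcal{ML}_{g,n}$ becomes very anisotropic — it stretches out in the directions corresponding to the pinching curves — so a naive covering argument with cells of fixed Euclidean size in a fixed chart will not have a uniform constant. The resolution is that the covering/packing argument must be carried out intrinsically with respect to the piecewise-integral-linear structure on $\mathcal{ML}_{g,n}$ and the Thurston measure itself: integral points form a lattice of covolume one with respect to $\mu_{\mathrm{Thu}}$ in every train-track chart, and this is precisely what makes the comparison constant depend only on the combinatorics of the finitely many train tracks needed to cover $\mathcal{ML}_{g,n}$, hence only on $g$ and $n$. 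Carefully extracting a constant $D$ controlling the length distortion under a unit shift — uniformly over $X$ — is the delicate point; here one uses that $\lambda \mapsto \ell_\lambda(X)$ is given in each train-track chart by a formula (a sum over branches of the branch weight times the length of the corresponding geodesic arc) whose "coefficients" $\ell$ of the arcs, while not uniformly bounded as $X$ degenerates, enter only through the combination that reconstructs $\ell_\lambda(X)$ itself, so the shift bound can be taken proportional to $\ell_\lambda(X)$ with a universal constant, which is exactly what the homogeneity argument above needs. Once this uniformity is in hand, the rest of the proof is essentially the same estimate used for the upper bound in Theorem \ref{theo:main_res_1}, now applied with the extra uniformity over $\eta$ coming for free because $\eta$ only enters through the containment $\mathrm{Mod}_{g,n}\cdot\eta \subseteq \mathcal{ML}_{g,n}(\mathbf{Z})$.
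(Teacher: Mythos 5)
Your reduction has a genuine gap: the intermediate claim you rely on, namely a uniform bound $\#\{\alpha\in\mathcal{ML}_{g,n}(\mathbf{Z}) : \ell_\alpha(X)\le L\}\le C'\,L^{6g-6+2n}B(X)$ valid for all $X$ and all $L\ge L_0$, is false, and the paper says so explicitly (Remark \ref{rem:b(X,L)_no_bound}: $b(\cdot,L)\notin L^2(\mathcal{M}_{g,n},\widehat{\mu}_{\text{wp}})$ for every $L$, whereas $L^{6g-6+2n}B(X)$ is square-integrable by Theorem \ref{theo:mir_L2}, so no such pointwise inequality can hold). Concretely, take $X$ with one simple closed geodesic $\gamma$ of length $\ell\ll e^{-L}$. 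The multiples $k\gamma$ with $k\le L/\ell$ already give at least $L/\ell$ integral laminations of length $\le L$, while $L^{6g-6+2n}B(X)\asymp L^{6g-6+2n}/(\ell\,|\log\ell|)$ up to constants; the ratio grows like $|\log\ell|/L^{6g-7+2n}$ as $\ell\to 0$ with $L$ fixed, so no constant $C'$ works uniformly once $L_0$ is fixed. This is exactly the ``thin rectangle'' phenomenon: when the collar width $w(\ell)$ exceeds $L$, the slab $\{L_{\mathcal{P}}(X,\cdot)\le L\}$ has one side of Euclidean width less than $1$, and its Lebesgue measure badly undercounts its lattice points. Your proposed fix (running the packing argument intrinsically in train-track charts using unit covolume) cannot repair this, because the failure is not a chart artifact: a unit step in the intersection-number coordinate $m_i$ changes length by about $w(\ell_{\gamma_i}(X))$, which is not $O(\ell_\lambda(X))$ for the lattice points $k\gamma_i$ themselves, so the shift constant $D$ in your covering step is genuinely unbounded in the degenerating regime, and the conclusion it would give is contradicted by the counterexample above.

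The information you discard in the first step, $s(X,\eta,L)\le b(X,L)$, is precisely what makes the proposition true: the count is over a single mapping class group orbit. The paper's proof (of Proposition \ref{prop:count_L2_bound}) keeps this: after choosing, via Theorem \ref{theo:bers}, an $L_{g,n}$-bounded pants decomposition containing all $\epsilon$-short curves and replacing $\ell_\cdot(X)$ by the combinatorial length of Lemma \ref{length_comparison_lemma}, one observes that if $w(\ell_{\gamma_i}(X))>CL$ then every curve in $\text{Mod}_{g,n}\cdot\eta$ of combinatorial length $\le CL$ has $m_i=0$, and then $t_i$ records the weight of $\gamma_i$ as a component of a curve in the orbit of $\eta$, so it takes at most $3g-2+n$ values. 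Thus the ``thin'' coordinate pairs contribute only a bounded factor, and the Lebesgue-volume comparison is applied only to the truncated box in the remaining coordinates, where $w(\ell_{\gamma_i}(X))\le CL$ and $\ell_{\gamma_i}(X)\le CL$ guarantee the volume bound $\prod_i 8C^2L^2/(\ell_{\gamma_i}(X)\,w(\ell_{\gamma_i}(X)))$; one then finishes as in Theorem \ref{theo:mir_new_bound}. So your overall architecture (reduce to a lattice count, compare with a measure, invoke the collar-length product) is in the right spirit, but the step erasing the dependence on the orbit of $\eta$ must be replaced by the orbit-specific bounded-weight observation; without it the statement you are trying to prove for $b(X,L)$ is simply not true.
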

$ $

It turns out that the constants $b_{g,n}$ and  $a_{g,n}$ can be recovered from the frequencies $c(\gamma)$ and the joint frequencies $c(\gamma_1,\gamma_2)$, respectively. More precisely, let $\mathcal{ML}_{g,n}(\mathbf{Z})$ be the set of all integral multi-curves on $S_{g,n}$. Following the ideas introduced in the proof of Theorem 5.3 in \cite{Mir08b}, we obtain the following formulas:\\

\begin{theorem}
	\label{theo:main_res_5}
	For any integers $g,n \geq 0$ such that $2 -2g -n < 0$,
	\begin{align*}
	b_{g,n} &= \sum_{\gamma \in \mathcal{ML}_{g,n}(\mathbf{Z})/\text{Mod}_{g,n}} c(\gamma), \\
	a_{g,n} &= \sum_{\gamma_1,\gamma_2 \in \mathcal{ML}_{g,n}(\mathbf{Z})/\text{Mod}_{g,n}} c(\gamma_1,\gamma_2).
	\end{align*}
\end{theorem}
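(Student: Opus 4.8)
The plan is to recognize both identities as instances of an unfolding (integration over moduli space against a sum over a mapping class group orbit) combined with the asymptotic formulas of Mirzakhani. For the first identity, recall from Theorem \ref{theo:lead} that $n_\gamma(X) = c(\gamma) B(X)/b_{g,n}$, and from Proposition \ref{prop:mir_freq} that $c(\gamma) = \lim_{L\to\infty} L^{-(6g-6+2n)} P(L,\gamma)$ with $P(L,\gamma) = \int_{\mathcal{M}_{g,n}} s(X,\gamma,L)\, d\widehat\mu_{\mathrm{wp}}(X)$. The starting point is the identity, valid for each fixed $X$,
\[
\sum_{\gamma \in \mathcal{ML}_{g,n}(\mathbf{Z})/\mathrm{Mod}_{g,n}} s(X,\gamma,L) = \#\{\lambda \in \mathcal{ML}_{g,n}(\mathbf{Z}) \ | \ \ell_\lambda(X) \leq L\},
\]
since the orbit-representative sum over integral multi-curves simply reassembles the full count of integral points in the length-$L$ ball; by the Thurston-measure interpretation of lattice point counting (the number of integral laminations of length $\le L$ is asymptotic to $\mu_{\mathrm{Thu}}(\{\ell_\lambda(X)\le L\}) = L^{6g-6+2n} B(X)$), dividing by $L^{6g-6+2n}$ and letting $L\to\infty$ gives $\sum_\gamma n_\gamma(X) = B(X)$ pointwise. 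Then I would integrate this over $\mathcal{M}_{g,n}$; since $\int n_\gamma \, d\widehat\mu_{\mathrm{wp}} = c(\gamma)$ (noted in the excerpt), interchanging sum and integral yields $\sum_\gamma c(\gamma) = \int B \, d\widehat\mu_{\mathrm{wp}} = b_{g,n}$.

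For the second identity the structure is the same but squared. Pointwise, expanding the product of the two counting sums,
\[
\sum_{\gamma_1,\gamma_2} s(X,\gamma_1,L)\, s(X,\gamma_2,L) = \left(\sum_\gamma s(X,\gamma,L)\right)^2 = \Big(\#\{\lambda \in \mathcal{ML}_{g,n}(\mathbf{Z}) : \ell_\lambda(X)\le L\}\Big)^2,
\]
which is asymptotic to $L^{12g-12+4n} B(X)^2$. So after dividing by $L^{12g-12+4n}$ the left side converges pointwise to $\sum_{\gamma_1,\gamma_2} c(\gamma_1,\gamma_2)$, where by Theorem \ref{theo:main_res_4} each term equals $(a_{g,n}/b_{g,n}^2)\, c(\gamma_1) c(\gamma_2)$; and also, integrating the asymptotic over $\mathcal{M}_{g,n}$, this limit should equal $\int_{\mathcal{M}_{g,n}} B(X)^2 \, d\widehat\mu_{\mathrm{wp}}(X) = a_{g,n}$. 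Thus $a_{g,n} = \sum_{\gamma_1,\gamma_2} c(\gamma_1,\gamma_2)$. (Consistency check: the right side equals $(a_{g,n}/b_{g,n}^2)\big(\sum_\gamma c(\gamma)\big)^2 = (a_{g,n}/b_{g,n}^2) b_{g,n}^2 = a_{g,n}$, so the two identities in the theorem are compatible via Theorem \ref{theo:main_res_4}.)

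The main obstacle is justifying the interchanges of limit, sum, and integral, i.e. upgrading the pointwise asymptotics to statements about the integrals. This is exactly where Theorem \ref{theo:main_res_1} and Proposition \ref{prop:main_res_3} enter: Proposition \ref{prop:main_res_3} gives a uniform (in $L \ge L_0$, in $X$, and crucially in the multi-curve) bound $s(X,\eta,L)/L^{6g-6+2n} \le C \prod_{\gamma:\ell_\gamma(X)\le\epsilon} R(\ell_\gamma(X))$, whose right-hand side is, by Theorem \ref{theo:main_res_1}, comparable to $B(X)$; hence $s(X,\gamma_1,L) s(X,\gamma_2,L)/L^{12g-12+4n} \lesssim B(X)^2$, which is integrable by Theorem \ref{theo:main_res_2}. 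This provides the dominating function needed to apply dominated convergence for the limit $L\to\infty$ inside $\int_{\mathcal{M}_{g,n}}(\cdots)\, d\widehat\mu_{\mathrm{wp}}$. For the interchange of the sum over orbit representatives with the integral, I would use Tonelli's theorem, since every term $s(X,\gamma,L) \ge 0$; one must also note that for fixed $L$ only finitely many orbit representatives contribute a nonzero term on a given compact set, but non-negativity makes this automatic. A final technical point is handling the uniformity of the lattice-point asymptotic $\#\{\ell_\lambda(X)\le L\} \sim L^{6g-6+2n}B(X)$ as $X$ varies; I would invoke the continuity of $B$ together with the scaling homogeneity of $\mu_{\mathrm{Thu}}$, following the argument already used by Mirzakhani in the proof of her Theorem 5.3, and reduce to the dominated-convergence step above.
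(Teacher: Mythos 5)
Your overall architecture (decompose $b(X,L):=\#\{\alpha\in\mathcal{ML}_{g,n}(\mathbf{Z}):\ell_\alpha(X)\le L\}$ as $\sum_\gamma s(X,\gamma,L)$ over orbits, pass to the limit, integrate, use Tonelli) is the same as the paper's, and your remark that the second identity follows formally from the first together with Theorem \ref{theo:main_res_4} — which you relegate to a ``consistency check'' — is in fact exactly how the paper proves it. But the first identity as you argue it has a genuine gap: the step ``dividing by $L^{6g-6+2n}$ and letting $L\to\infty$ gives $\sum_\gamma n_\gamma(X)=B(X)$ pointwise'' requires interchanging the limit $L\to\infty$ with an \emph{infinite} sum over mapping class group orbits (there are infinitely many, because each of the finitely many topological types carries arbitrary positive integer weights $\mathbf{a}\in\mathbf{N}^k$). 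Since each term is non-negative, Fatou only gives $\sum_\gamma n_\gamma(X)\le B(X)$, hence $\sum_\gamma c(\gamma)\le b_{g,n}$; the reverse inequality is precisely the issue. The dominating bound you propose, Proposition \ref{prop:main_res_3} (equivalently Theorem \ref{theo:main_res_1}), cannot do this job: it is uniform in the multi-curve $\eta$ but does not decay in $\eta$, so the would-be dominating series $\sum_\gamma C\cdot F(X)$ diverges. That bound is the right tool for dominating in the variable $X$ (as in the proof of Theorem \ref{theo:main_res_4}), not in the orbit variable.

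What is missing is a weight-decaying estimate: the paper's Lemma \ref{lem:count_bound}, which for fixed $X$ produces constants $C(X),L_0>0$ with
\[
\frac{s(X,\mathbf{a}\cdot\gamma,L)}{L^{6g-6+2n}}\ \le\ C(X)\cdot\prod_{i=1}^k\frac{1}{a_i^2}
\qquad\text{for all }\mathbf{a}\in\mathbf{N}^k,\ L\ge L_0,
\]
so that dominated convergence applies to the sum over $\mathbf{a}$ (the dominating series is $\zeta(2)^k$). Its proof is not soft: it uses Mirzakhani's integration formula (Theorem \ref{theo:count_integral}), the substitution $u_i=a_ix_i$ to extract the factor $\prod a_i^{-2}$, positivity of the coefficients of the volume polynomial $V_{g,n}(\gamma,\mathbf{x})$ to bound $V_{g,n}(\gamma,\mathbf{u}/\mathbf{a})\le V_{g,n}(\gamma,\mathbf{u})$, and a comparison of the pointwise count $s(X,\cdot,L)$ with its integral over a unit ball around $X$ in the symmetric Thurston metric to convert the integrated bound into a pointwise one. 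Your appeal to ``continuity of $B$ and scaling of $\mu_{\mathrm{Thu}}$, following Mirzakhani's Theorem 5.3'' gestures at this but does not supply the estimate, and without it neither the pointwise identity $\sum_\gamma n_\gamma(X)=B(X)$ nor (via your Route through integrated quantities) the identity $\sum_\gamma c(\gamma)=b_{g,n}$ is justified. Once the first identity is in place, drop your primary argument for the second one (it has the same interchange problem, compounded by a conflation of the pointwise limit $B(X)^2$ with the numerical limit $\sum c(\gamma_1,\gamma_2)$) and promote the consistency check to the proof: $\sum_{\gamma_1,\gamma_2}c(\gamma_1,\gamma_2)=\frac{a_{g,n}}{b_{g,n}^2}\bigl(\sum_\gamma c(\gamma)\bigr)^2=a_{g,n}$.
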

$ $

Consider the probability space $(\mathcal{M}_{g,n}, \widehat{\mu}_{\text{wp}}/m_{g,n})$. According to Proposition \ref{prop:mir_freq}, the expected value of the counting function $s(X,\gamma,L)$
\[
\mathbf{E}(\gamma,L) := \mathbf{E}(s(\gamma,X,L)) = \frac{1}{m_{g,n}} \cdot \int_{\mathcal{M}_{g,n}} s(X,\gamma,L) \ d\widehat{\mu}_{\text{wp}}(X)
\]
is a polynomial of degree $6g-6+2n$ in the variable $L$ with leading coefficient 
\[
\mathbf{E}(\gamma) := \lim_{L \to \infty} \frac{\mathbf{E}(\gamma,L)}{L^{6g-6+2n}} = \frac{c(\gamma)}{m_{g,n}}.
\]
According to Theorem \ref{theo:main_res_4}, the covariance of the counting functions $s(X,\gamma_1,L)$ and $s(X,\gamma_2,L)$ 
\begin{align*}
\mathbf{Cov}(\gamma_1,\gamma_2,L) &:= \mathbf{Cov}(s(X,\gamma_1,L),s(X,\gamma_2,L))\\
&= \frac{1}{m_{g,n}}  \cdot\int_{\mathcal{M}_{g,n}} s(X,\gamma_1,L) \cdot s(X,\gamma_2,L) \ d\widehat{\mu}_{\text{wp}}(X)\\
&\phantom{=}-\frac{1}{m_{g,n}^2} \cdot \left( \int_{\mathcal{M}_{g,n}} s(X,\gamma_1,L)  \ d\widehat{\mu}_{\text{wp}}(X)\right) \left( \int_{\mathcal{M}_{g,n}}  s(X,\gamma_2,L) \ d\widehat{\mu}_{\text{wp}}(X)\right)
\end{align*}
behaves asymptotically, as $L \to \infty$, like a polynomial of degree $12g-12+4n$ in $L$ with leading coefficient
\[
\mathbf{Cov}(\gamma_1,\gamma_2) := \lim_{L \to \infty} \frac{\mathbf{Cov}(\gamma_1,\gamma_2,L)}{L^{12g-12+4n}} = \frac{c(\gamma_1,\gamma_2)}{m_{g,n}} - \frac{c(\gamma_1) \cdot c(\gamma_2)}{m_{g,n}^2}.
\]
Theorem \ref{theo:main_res_4} establishes the following relation
\[
\mathbf{Cov}(\gamma_1,\gamma_2) = \frac{\mathbf{Var}(B(X))}{\mathbf{E}(B(X))^2} \cdot \mathbf{E}(\gamma_1) \cdot \mathbf{E}(\gamma_2).
\]
Theorem \ref{theo:main_res_5} shows
\begin{align*}
\mathbf{E}(B(X)) &= \sum_{\gamma \in \mathcal{ML}_{g,n}(\mathbf{Z})/\text{Mod}_{g,n}} \mathbf{E}(\gamma),\\
\mathbf{Var}(B(X)) &= \sum_{\gamma_1,\gamma_2 \in \mathcal{ML}_{g,n}(\mathbf{Z})/\text{Mod}_{g,n}} \mathbf{Cov}(\gamma_1,\gamma_2).
\end{align*}
$ $

\textit{Organization of the paper.} In \S\ref{background} we present the background material necessary to understand the proofs of the main results. In Section 3 we discuss the global behavior of the Mirzakhani function and present the proofs of Theorems \ref{theo:main_res_1} and \ref{theo:main_res_2}. In Section 4 we discuss the connections between the newly acquired knowledge of the global behavior of the Mirzakahani function and the statistics of counting problems for simple closed hyperbolic geodesics; we prove Proposition \ref{prop:main_res_3}, and Theorems \ref{theo:main_res_4} and \ref{theo:main_res_5}. In Section 5 we present a series of open questions that arise naturally from the work in this paper.\\

\textit{Acknowledgments.} The authors are very grateful to Scott Wolpert for his enlightening conversations, his careful reading of an earlier version of this paper, and his detailed comments and suggestions that greatly helped improve the exposition. The first author would also like to thank Alex Wright and Steven Kerckhoff for their invaluable advice, patience, and encouragement.\\

\section{Background material}\label{background}

$ $

\textit{Notation.} Let $g,n \geq 0$ be integers such that $2 - 2g - n < 0$. For the rest of this paper, $S_{g,n}$ will denote a connected, oriented, smooth surface of genus $g$ with $n$ punctures (and negative Euler characteristic). For $g \geq 0$ we will also use the notation $S_{g} := S_{g,0}$. Unless otherwise specified, when applied to simple closed curves or measured geodesic laminations, the word length will always mean hyperbolic length.\\

\textit{Teichm\"uller and moduli spaces of hyperbolic surfaces.} The Teichm\"uller space of $S_{g,n}$, denoted $\mathcal{T}_{g,n}$, is the space of all marked oriented, complete, finite area hyperbolic structures on $S_{g,n}$ up to isotopy. More precisely, $\mathcal{T}_{g,n}$ is the space of pairs $(X,\phi)$, where $X$ is an oriented, complete, finite area hyperbolic surface and $\phi \colon S_{g,n} \to X$ is an orientation-preserving diffeomorphism, modulo the equivalence relation $(X,\phi_1) \sim (X,\phi_2)$ if and only if there exists an orientation-preserving isometry $I \colon X_1 \to X_2$ isotopic to $\phi_2 \circ \phi_1^{-1}$. \\

Given a marked hyperbolic surface $(X,\phi) \in \mathcal{T}_{g,n}$ and a simple closed curve $\gamma$ on $S_{g,n}$, we will denote by $\ell_\gamma(X) > 0$ the hyperbolic length of the unique geodesic representative of $\phi(\gamma)$ on $X$; we usually omit the markings in the notation and simply say that this is the length of the geodesic representative of $\gamma$ on $X$. Given a pair of pants decomposition $\mathcal{P} := \{\gamma_1,\dots,\gamma_{3g-3+n}\}$ of $S_{g,n}$, the length functions $\ell_i := \ell_{\gamma_i} \colon \mathcal{T}_{g,n} \to \mathbf{R}_{>0}$ can be complemented with twist parameters $\tau_i \colon \mathcal{T}_{g,n} \to \mathbf{R}$ to obtain a set of coordinates $(\ell_i,\tau_i)_{i=1}^{3g-3+n} \in (\mathbf{R}_{>0} \times \mathbf{R})^{3g-3+n}$ for $\mathcal{T}_{g,n}$. Any such set of coordinates is called a set of Fenchel-Nielsen coordinates of $\mathcal{T}_{g,n}$ adapted to $\mathcal{P}$. See \cite[\S 10.6]{FM11} for more details.\\

We denote the mapping class group of $S_{g,n}$ by $\text{Mod}_{g,n}$. The mapping class group of $S_{g,n}$ acts properly discontinuously on $\mathcal{T}_{g,n}$ by change of marking. The quotient $\mathcal{M}_{g,n} := \mathcal{T}_{g,n} / \text{Mod}_{g,n}$ is the moduli space of oriented, complete, finite area hyperbolic structures on $S_{g,n}$. \\

\textit{The Weil-Peterson volume form.} The Teichm\"ueller space $\mathcal{T}_{g,n}$ can be endowed with a $3g-3+n$ dimensional complex structure. This complex structure admits a natural K\"ahler Hermitian structure. The associated symplectic form $\omega_{\text{wp}}$ is called the Weil-Petersson symplectic form. The Weil-Petersson volume form is the top exterior power $v_{\text{wp}} := \frac{1}{(3g-3+n)!}\bigwedge^{3g-3+n} \omega_{\text{wp}}$. The Weil-Petersson measure on $\mathcal{T}_{g,n}$ is the measure $\mu_{\text{wp}}$ induced by $v_{\text{wp}}$. The Weil-Petersson measure $\widehat{\mu}_{\text{wp}}$ on $\mathcal{M}_{g,n}$ is the local pushforward of $\mu_{\text{wp}}$ under the quotient map $\mathcal{T}_{g,n} \to \mathcal{M}_{g,n}$. See \cite{Hub16} for more details.\\

In \cite{Wol85}, Wolpert obtained the following expression for $\omega_{\text{wp}}$, valid for any set of Fenchel-Nielsen coordinates $(\ell_i,\tau_i)_{i=1}^{3g-3+n} \in (\mathbf{R}_{>0} \times \mathbf{R})^{3g-3+n}$ of $\mathcal{T}_{g,n}$, commonly known as \textit{Wolpert's magic formula}:
\[
\omega_{\text{wp}} = \sum_{i=1}^{3g-3+n} d \ell_i \wedge d\tau_i.
\] 
In particular, the Weil-Petersson volume form $v_{\text{wp}}$ can be expressed in Fenchel-Nielsen coordinates as follows:
\[
v_{\text{wp}} = \prod_{i=1}^{3g-3+n} d \ell_i \wedge d \tau_i.
\]
$ $

\textit{The collar lemma.} The following theorem, commonly known as the \textit{collar lemma}, shows that short geodesics on hyperbolic surfaces admit wide embedded collar neighborhoods; see \cite[\S 13.5]{FM11} for details.\\

\begin{theorem}
	\label{theo:collar}
	Let $\gamma$ be a simple closed geodesic on a hyperbolic surface $X$. Then $N_{\gamma} := \{x \in X  \colon  d(x,\gamma) \leq w(\ell_\gamma(X))\}$ is an embedded annulus, where $w \colon \mathbf{R}_{>0} \to \mathbf{R}_{>0}$ is the function given by
	\[
	w(x) := \arcsinh\left( \frac{1}{\sinh \left(\frac x 2 \right)}\right).
	\]
\end{theorem}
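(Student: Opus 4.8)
The plan is to pass to the universal cover and reduce everything to a single inequality between two lifts of $\gamma$. Realize $X$ as $\mathbf{H}^2/\Gamma$ for a torsion-free Fuchsian group $\Gamma$, set $\ell := \ell_\gamma(X)$, and let $g \in \Gamma$ be the primitive hyperbolic element whose axis $A \subset \mathbf{H}^2$ projects to $\gamma$; its translation length is $\ell$ and $\operatorname{Stab}_\Gamma(A) = \langle g \rangle$. The preimage of $N_\gamma$ under $\mathbf{H}^2 \to X$ is $\bigcup_{h \in \Gamma} h\overline{U}$, where $\overline{U} := \{x \in \mathbf{H}^2 : d(x,A) \le w(\ell)\}$ is the closed $w(\ell)$-neighborhood of the geodesic $A$, and $N_\gamma$ is the image of $\overline{U}$ in $X$. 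Since $\overline{U}$ is $\langle g \rangle$-invariant and $\langle g \rangle \backslash \overline{U}$ is a closed annulus (a bi-infinite strip modulo a translation along its core), $N_\gamma$ is an embedded annulus exactly when the induced map $\langle g \rangle \backslash \overline{U} \to X$ is injective, i.e.\ exactly when $h\overline{U} \cap \overline{U} = \emptyset$ for every $h \in \Gamma \setminus \langle g \rangle$. So I would reduce the statement to the claim that $d(A, hA) \ge 2\,w(\ell)$ for every such $h$. (In the borderline case $d(A, hA) = 2\,w(\ell)$, which will turn out to force a parabolic element and hence to occur only when $\Gamma$ has cusps, the two closed collars meet in exactly one point; this standard degeneracy is harmless, and one may in any case instead prove that the open collar embeds, which is all the later applications require.)

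First I would use that $\gamma$ is simple: for $h \in \Gamma \setminus \langle g \rangle$ the geodesic $hA$ is a lift of $\gamma$ different from $A$, hence disjoint from $A$ (distinct lifts of a simple closed geodesic cannot cross) and not asymptotic to $A$ (distinct lifts of a closed geodesic share no endpoint at infinity, the $\Gamma$-stabilizer of an ideal point being cyclic). Thus $A$ and $hA$ have a common perpendicular of some length $D := d(A, hA) > 0$.

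Next comes a trace computation. Set $g' := h g h^{-1} \in \Gamma$, a hyperbolic element with axis $hA$ and translation length $\ell$. Choosing lifts to $\operatorname{SL}_2(\mathbf{R})$ with $\operatorname{tr} g = \operatorname{tr} g' = 2\cosh(\ell/2)$, the standard identity for the trace of a product of two hyperbolic transformations with disjoint axes gives, for the appropriate sign $\varepsilon \in \{\pm 1\}$ (orienting $g'$ oppositely to $g$ across their common perpendicular),
\[
\operatorname{tr}\!\big(g\,(g')^{\varepsilon}\big) \;=\; 2\cosh^2(\ell/2) \;-\; 2\sinh^2(\ell/2)\,\cosh D .
\]
Now $\psi := g\,(g')^{\varepsilon}$ is a nontrivial element of $\Gamma$ (nontrivial because $g'$ is not a power of $g$), and $\Gamma$ is torsion-free, so $\psi$ is parabolic or hyperbolic and $|\operatorname{tr}\psi| \ge 2$. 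Since $\cosh D \ge 1$ one always has $\operatorname{tr}\psi \le 2\cosh^2(\ell/2) - 2\sinh^2(\ell/2) = 2$, so either $\operatorname{tr}\psi = 2$ or $\operatorname{tr}\psi \le -2$. The first forces $\cosh D = 1$, i.e.\ $D = 0$, contradicting $D > 0$; hence $\operatorname{tr}\psi \le -2$, which rearranges to
\[
\cosh D \;\ge\; \frac{\cosh^2(\ell/2) + 1}{\sinh^2(\ell/2)} \;=\; 1 + \frac{2}{\sinh^2(\ell/2)} ,
\]
equivalently $\sinh^2(D/2) \ge 1/\sinh^2(\ell/2)$, equivalently $D \ge 2\arcsinh\!\big(1/\sinh(\ell/2)\big) = 2\,w(\ell)$. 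This is exactly the inequality needed, and it finishes the argument.

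The step I expect to require the most care is the trace identity above: pinning it down with the correct sign, either by a direct $\operatorname{SL}_2(\mathbf{R})$ computation (normalizing $A$ to be the imaginary axis, writing $hA$ via its two real endpoints, and using that these are constrained by $A \cap hA = \emptyset$ and by the $\langle g \rangle$-action) or by quoting the corresponding formula from the Fenchel--Nielsen trace calculus. The reduction in the first paragraph, the disjointness and non-asymptoticity of distinct lifts, and the bookkeeping for the borderline parabolic case are all routine.
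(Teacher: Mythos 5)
The paper does not prove this statement at all: it is quoted as background, with a pointer to \cite[\S 13.5]{FM11}, so there is no in-paper argument to compare against; I have therefore checked your proposal on its own terms, and it is correct. Your route is the classical discreteness-plus-trace argument (in the spirit of Keen's original collar theorem, or Beardon's treatment), whereas the source the paper cites argues via the annular cover and hyperbolic trigonometry; both are standard, and yours has the advantage of making the sharp constant transparent, since $w(x)=\arcsinh\bigl(1/\sinh(x/2)\bigr)$ drops straight out of $|\operatorname{tr}\psi|\ge 2$. The one step you flagged as delicate, the trace identity, does hold as you stated it: normalizing the common perpendicular to be the imaginary axis and writing the two axes as concentric semicircles of radii $a$ and $b=ae^{D}$, the lifts $M_a=\left(\begin{smallmatrix}\cosh(\ell/2) & a\sinh(\ell/2)\\ \sinh(\ell/2)/a & \cosh(\ell/2)\end{smallmatrix}\right)$ and $M_b$ give $\operatorname{tr}(M_aM_b^{-1})=2\cosh^2(\ell/2)-2\sinh^2(\ell/2)\cosh D$, which is your formula for the appropriate $\varepsilon$; the rest of the argument (disjointness and non-asymptoticity of distinct lifts, $\operatorname{Stab}_\Gamma(A)=\langle g\rangle$, exclusion of $\operatorname{tr}\psi=2$, and the rearrangement to $D\ge 2w(\ell)$) is sound.

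One remark on the borderline case, which you treat as a harmless degeneracy: it is not merely hypothetical, and your decision to fall back on the open collar is actually forced if one reads the statement literally. On a complete finite-area once-punctured torus with standard generators $\alpha,\beta$, taking $g=\alpha$, $h=\beta$ gives $\psi=g(g')^{-1}=[\alpha,\beta]$, the puncture parabolic with trace $-2$ (Fricke), so $d(A,hA)=2w(\ell)$ exactly and the two closed collars meet in a point; consequently the closed set $\{x: d(x,\gamma)\le w(\ell_\gamma(X))\}$ fails to be an embedded annulus there, while the open collar does embed. So your proof establishes the open-collar statement in general and the closed-collar statement whenever no such parabolic trace coincidence occurs (in particular for all closed surfaces), which is the correct sharp form of the lemma and is all that the paper uses (only the asymptotics of $w$ and the disjointness of short geodesics enter later). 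It would be worth adding one sentence making explicit that the discrepancy lies in the quoted statement's use of a closed neighborhood, not in your argument.
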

$ $

One can check that
\[
\lim_{x \to 0^+} \frac{w(x)}{|\log(x)|} = 1.
\]
In particular, 
\[
\lim_{x \to 0^+} w(x) = + \infty,
\]
i.e., short simple closed geodesics develop wide collars. As a consequence, one can find a universal constant $\epsilon > 0$ such that on any hyperbolic surface $X$ no two simple closed geodesics of length $\leq \epsilon$ intersect.\\

\textit{The Bers constant.} In \cite{Ber85}, Bers proved that every connected, orientable, closed hyperbolic suface $X$ of genus $g \geq 2$ admits a pair of pants decomposition $\mathcal{P} := \{\gamma_1,\dots,\gamma_{3g-3}\}$ satisfying
\[
\ell_{\gamma_i}(X) \leq L_g, \ \forall i=1,\dots,3g-3,
\]
where $L_g>0$ is a constant depending only on $g$. The best possible constant with such property is commonly know as the \textit{Bers' constant} of $S_g$. The following non-optimal version of Bers' theorem allows punctures and will be enough for our purposes; see \cite[\S 5]{Bus92} and \cite[\S 12.4.2]{FM11} for more details.\\

\begin{theorem}
	\label{theo:bers}
	Let $X \in \mathcal{T}_{g,n}$ be a marked hyperbolic structure and $\gamma_1,\dots, \gamma_k$ be pairwise disjoint, pairwise non-isotopic simple closed curves on $S_{g,n}$ satisfying
	\[
	\ell_{\gamma_i}(X) \leq 1, \ \forall i=1,\dots,k.
	\]
	Such a collection of curves can be completed to a pair of pants decomposition 
	\[
	\mathcal{P} := \{\gamma_1,\dots,\gamma_k,\gamma_{k+1}, \dots, \gamma_{3g-3+n}\}
	\]
	of $S_{g,n}$ satisfying
	\[
	\ell_{\gamma_i}(X) \leq L_{g,n}, \ \forall i=1,\dots,3g-3+n,
	\]
	where $L_{g,n} > 1$ is a constant depending only on $g$ and $n$.\\
\end{theorem}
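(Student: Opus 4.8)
The plan is to reduce the statement to the classical Bers theorem for hyperbolic surfaces with geodesic boundary by cutting $X$ along $\gamma_1,\dots,\gamma_k$, and to obtain that version by an induction on topological complexity whose inductive step produces a single short non-peripheral simple closed geodesic.

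First I would replace each $\gamma_i$ by its geodesic representative; as the $\gamma_i$ are disjoint and pairwise non-isotopic, these are pairwise disjoint simple closed geodesics of length $\le 1$. Cutting $X$ along their union yields a (possibly disconnected) complete, finite-area hyperbolic surface $X'$ with geodesic boundary, each boundary component of length $\le 1$. Every connected component $Y_j$ of $X'$ satisfies $-\chi(Y_j)\ge 1$ and $\sum_j(-\chi(Y_j))=-\chi(S_{g,n})=2g-2+n$, so the $Y_j$ realize only finitely many topological types, all controlled by $g$ and $n$. It thus suffices to show that a complete, finite-area hyperbolic surface $Y$ whose boundary geodesics all have length $\le \ell$ admits a pants decomposition rel $\partial Y$ all of whose curves have length $\le L$, with $L$ depending only on the topological type of $Y$ and on $\ell$: applying this with $\ell=1$ to each $Y_j$ and adjoining $\{\gamma_1,\dots,\gamma_k\}$ gives the desired decomposition of $X$, and one may take $L_{g,n}:=\max\{1,\max_j L(Y_j,1)\}$, a maximum over finitely many types, which depends only on $g$ and $n$.

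For this boundary version I would induct on $N(Y):=3g'-3+b'+n'$, the number of curves in a pants decomposition of $Y$ (with $Y$ of genus $g'$, having $b'$ boundary components and $n'$ cusps), allowing the boundary bound $\ell$ to be an arbitrary positive number and the constant $L$ to depend on $N(Y)$ and $\ell$. If $N(Y)=0$ then $Y$ is a pair of pants and there is nothing to do. If $N(Y)\ge 1$ then $Y$ is not a pair of pants, and the geometric heart of the matter is that $Y$ then contains a non-peripheral simple closed geodesic $\alpha$ with $\ell_\alpha(Y)\le h(N(Y),\ell)$ for some explicit $h$; this is Buser's argument in \cite[\S 5]{Bus92}, using that $Y$ has area $2\pi|\chi(Y)|$ by Gauss--Bonnet, so that embedded half-collars around a boundary geodesic (or embedded horoball neighborhoods of a cusp) cannot have unbounded width, and at the width where embeddedness first fails one extracts, after passing to a geodesic representative, the required curve. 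Cutting $Y$ along $\alpha$ produces pieces each with strictly smaller value of $N$ and all boundary geodesics of length $\le \max\{\ell,h(N(Y),\ell)\}$, so the inductive hypothesis applies; unwinding the recursion, which has depth at most $3g-3+n$, gives a final bound depending only on $N(Y)$ and $\ell$, hence only on $g$ and $n$.

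The step I expect to be the main obstacle is this geometric core — producing one non-peripheral simple closed geodesic of controlled length — together with the bookkeeping required to keep all constants depending on $g$ and $n$ alone: since the recursion inflates the boundary-length bound at each stage, one must check that it nonetheless terminates at a $(g,n)$-dependent value, and one must confirm that the curve obtained from a collar or horoball that has ceased to be embedded can be taken non-peripheral (replacing it, if need be, by a nearby non-peripheral curve or by a suitable boundary component of a regular neighborhood). Both points are standard and are treated in \cite[\S 5]{Bus92} and \cite[\S 12.4.2]{FM11}; the part specific to the present statement is only the initial cutting reduction.
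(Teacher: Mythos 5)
The paper does not prove this theorem at all: it is quoted as background, with pointers to \cite[\S 5]{Bus92} and \cite[\S 12.4.2]{FM11}. Your argument --- cut along the geodesic representatives of $\gamma_1,\dots,\gamma_k$, note the finitely many topological types of the resulting pieces, and run the standard induction that extracts a non-peripheral simple closed geodesic of length bounded in terms of the area and the boundary-length bound, with the recursion depth at most $3g-3+n$ keeping the final constant dependent only on $g$ and $n$ --- is precisely the standard proof from those references and is correct.
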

$ $

\textit{Measured geodesic laminations and singular measured foliations.} A geodesic lamination $\lambda$ on a complete, finite area hyperbolic surface $X$ diffeomorphic to $S_{g,n}$ is a set of disjoint simple, complete geodesics whose union is a compact subset of $X$. A measured geodesic lamination is a geodesic lamination carrying an invariant transverse measure fully supported on the lamination. We can understand measured geodesic laminations by lifting them to a universal cover $\mathbf{H}^2 \to X$. A non-oriented geodesic on $\mathbf{H}^2$ is specified by a set of distinct points on the boundary at infinity $\partial^\infty \mathbf{H}^2 = S^1$. It follows that measured geodesic laminations on  diffeomorphic hyperbolic surfaces may be compared by passing to the boundary at infinity of their universal covers. Thus, the space of measured geodesic laminations on $X$ depends only on the underlying topological surface $S_{g,n}$. \\

We denote the space of measured geodesic laminations on $S_{g,n}$ by $\mathcal{ML}_{g,n}$. It can be topologized by embedding it into the space of geodesic currents on $S_{g,n}$. By taking  geodesic representatives, integral multi-curves on $S_{g,n}$ can be interpreted as elements of $\mathcal{ML}_{g,n}$; we denote them by $\mathcal{ML}_{g,n}(\mathbf{Z})$. Given any marked hyperbolic structure $(X,\phi) \in \mathcal{T}_{g,n}$, there is a unique continuous affine extension of the length function $\ell_\cdot(X) \colon \mathcal{ML}_{g,n}(\mathbf{Z}) \to \mathbf{R}_{>0}$ to the set of all measured geodesic laminations on $S_{g,n}$; we also denote such extension by $\ell_\cdot(X) \colon \mathcal{ML}_{g,n} \to \mathbf{R}_{>0}$. For more details on the theory of measured geodesic laminations see \cite{Bon88}, \cite{Bon97}, and \cite[\S 8.3]{Mar16}.\\

\textit{The Thurston measure.}  The space of measured geodesic laminations $\mathcal{ML}_{g,n}$ admits a $6g-6+2n$ dimensional piecewise integral linear structure induced by train track charts. The integer points of this structure are precisely the integral multi-curves $\mathcal{ML}_{g,n}(\mathbf{Z}) \subseteq \mathcal{ML}_{g,n}$. For each $L > 0$, consider the counting measure $\mu^L$ on $\mathcal{ML}_{g,n}$ given by
\begin{equation}
\label{ML_counting_measure}
\mu^L := \frac{1}{L^{6g-6+2n}} \sum_{\gamma \in \mathcal{ML}_{g,n}(\mathbf{Z})} \delta_{ \frac{1}{L} \cdot \gamma}.
\end{equation}
As $L \to \infty$, this sequence of counting measures converges to a non-zero, locally finite measure $\mu_{\text{Thu}}$ on $\mathcal{ML}_{g,n}$ called the Thurston measure. This measure is $\text{Mod}_{g,n}$-invariant and belongs to the Lebesgue measure class. It also satisfies the following scaling property: $\mu_{\text{Thu}}(t \cdot A) = t^{6g-6+2n} \cdot \mu_{\text{Thu}}(A)$ for every measurable set $A \subseteq \mathcal{ML}_{g,n}$ and every $t > 0$.\\

Train track charts also induce a $\text{Mod}_{g,n}$-invariant symplectic form $\omega_{\text{Thu}}$ on $\mathcal{ML}_{g,n}$ called the Thurston symplectic form. For more details on the definition of $\omega_{\text{Thu}}$ see\cite[\S 3.2]{PH92}. The top exterior power $v_{\text{Thu}} := \frac{1}{(3g-3+n)!} \bigwedge^{3g-3+n}\omega_{\text{Thu}}$ is called the Thurston volume form. In \cite{Mas85}, Masur showed that the action of $\text{Mod}_{g,n}$ on $\mathcal{ML}_{g,n}$ is ergodic with respect to $\mu_{\text{Thu}}$. As a consequence, $\mu_{\text{Thu}}$ is the unique (up to scaling) $\text{Mod}_{g,n}$-invariant measure on $\mathcal{ML}_{g,n}$ in the Lebesgue measure class. It follows that the measure induced by the Thurston volume form on $\mathcal{ML}_{g,n}$ is a multiple of $\mu_{\text{Thu}}$. Moreover, see \cite{MT19} for a detailed proof, the scaling factor relating these measures can be computed explicitely:\\

\begin{proposition}\cite{MT19}
	\label{Thurston_measure_scaling_factor}
	If $\nu_{\text{Thu}}$ denotes the measure induced by the Thurston volume form on $\mathcal{ML}_{g,n}$, then
	\[
	\nu_{\text{Thu}} = 2^{2g-3+n} \cdot \mu_{Thu}.
	\]
\end{proposition}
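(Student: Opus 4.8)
The plan is to reduce the statement to the identification of a single proportionality constant and then to compute that constant explicitly in Dehn--Thurston coordinates.

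\emph{Reduction to a constant.} Since $\omega_{\mathrm{Thu}}$ is $\mathrm{Mod}_{g,n}$-invariant, so are the volume form $v_{\mathrm{Thu}}$ and the measure $\nu_{\mathrm{Thu}}$ it induces; moreover $\nu_{\mathrm{Thu}}$ is nonzero and lies in the Lebesgue measure class. By the uniqueness statement recalled just above (a consequence of Masur's ergodicity theorem, \cite{Mas85}), every $\mathrm{Mod}_{g,n}$-invariant Lebesgue-class measure on $\mathcal{ML}_{g,n}$ is a scalar multiple of $\mu_{\mathrm{Thu}}$, so $\nu_{\mathrm{Thu}}=c_{g,n}\cdot\mu_{\mathrm{Thu}}$ for some $c_{g,n}>0$, and everything reduces to showing $c_{g,n}=2^{2g-3+n}$.

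\emph{Identifying the two measures in coordinates.} Fix a pair of pants decomposition $\mathcal{P}=\{\gamma_1,\dots,\gamma_{3g-3+n}\}$ of $S_{g,n}$ and pass to the associated Dehn--Thurston coordinates $(m_i,t_i)_{i=1}^{3g-3+n}$, where $m_i=i(\cdot,\gamma_i)\in\mathbf{Z}_{\geq 0}$ is the intersection number with $\gamma_i$ and $t_i\in\mathbf{R}$ is the twisting parameter, normalised so that a full Dehn twist about $\gamma_i$ acts by $t_i\mapsto t_i+m_i$. On the open dense set $U=\{m_i>0\ \text{for all }i\}\subseteq\mathcal{ML}_{g,n}$ these form genuine global coordinates, so it suffices to compare the two measures on $U$. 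On $U$ one has the classical identity $\omega_{\mathrm{Thu}}=\sum_{i} dm_i\wedge dt_i$ (Thurston; cf.\ \cite[\S 3.2]{PH92}), which I would either quote directly or re-derive by transporting the combinatorial train-track formula for $\omega_{\mathrm{Thu}}$ to the track dual to a standard model of $\mathcal{P}$. Hence $v_{\mathrm{Thu}}=\prod_i dm_i\wedge dt_i$ is Lebesgue measure in these coordinates, and $\nu_{\mathrm{Thu}}|_U$ is Lebesgue measure. On the other hand, $\mu_{\mathrm{Thu}}$ is the weak-$\ast$ limit of $L^{-(6g-6+2n)}\sum_{\gamma\in\mathcal{ML}_{g,n}(\mathbf{Z})}\delta_{\gamma/L}$, and since the map sending an integral multi-curve to its Dehn--Thurston coordinate vector is the restriction of a linear map, a standard lattice-point count gives $\mu_{\mathrm{Thu}}|_U=\mathrm{covol}(\Lambda)^{-1}\cdot(\text{Lebesgue})$, where $\Lambda\subseteq\mathbf{R}^{6g-6+2n}$ is the lattice spanned by the Dehn--Thurston vectors of integral multi-curves and $\mathrm{covol}$ is taken with respect to $\prod_i dm_i\wedge dt_i$. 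Therefore $c_{g,n}=\mathrm{covol}(\Lambda)$.

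\emph{The covolume computation.} It remains to show $\mathrm{covol}(\Lambda)=2^{2g-3+n}$, i.e.\ $[\mathbf{Z}^{6g-6+2n}:\Lambda]=2^{2g-3+n}$. I would describe $\Lambda$ explicitly: an integer vector $(m_i,t_i)_i$ is realised by an integral multi-curve precisely when, for every pair of pants $P$ in the decomposition cut out by $\mathcal{P}$, the sum of the $m_i$ over those cuffs of $P$ lying in $\mathcal{P}$ is even (the classical parity condition guaranteeing that the required arc system exists in $P$, cuffs at cusps contributing $0$), with no constraint on the $t_i$. Thus $\Lambda$ is the kernel of the homomorphism $\mathbf{Z}^{6g-6+2n}\to\mathbf{F}_2^{2g-2+n}$, $(m_i,t_i)_i\mapsto\big(\sum_{\gamma_i\subset\partial P}m_i\bmod 2\big)_P$. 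A relation among the $2g-2+n$ parity functionals is a collection $\mathcal{S}$ of pairs of pants such that every curve in $\mathcal{P}$ bounds an even number of elements of $\mathcal{S}$; since each such curve bounds exactly two pairs of pants, $\mathcal{S}$ must be a union of connected components of the adjacency graph of pairs of pants, which is connected because $S_{g,n}$ is. Hence the only relation is the total sum, the functionals span a space of dimension $2g-2+n-1=2g-3+n$, the homomorphism has image of size $2^{2g-3+n}$, and $[\mathbf{Z}^{6g-6+2n}:\Lambda]=2^{2g-3+n}$, completing the proof.

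\emph{Main obstacle.} The delicate points are the identity for $\omega_{\mathrm{Thu}}$ in Dehn--Thurston coordinates and the covolume computation: one must fix Dehn--Thurston conventions precisely enough that $\omega_{\mathrm{Thu}}=\sum dm_i\wedge dt_i$ holds on the nose (the twist normalisation matters), justify the behaviour at the walls $\{m_i=0\}$ so that comparing the measures on $U$ is legitimate, and extract the parity conditions so that exactly $2g-3+n$ of them are independent rather than $2g-2+n$. Running the argument by hand on $S_{1,1}$, where $2^{2g-3+n}=1$ and $\Lambda=\mathbf{Z}^{2}$, and on $S_{0,4}$, where $2^{2g-3+n}=2$ and the lone intersection number is forced even, provides a useful check on the conventions.
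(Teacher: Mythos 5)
Your proposal is correct and follows essentially the same route as the paper, which outsources the detailed proof to \cite{MT19} but sketches exactly this argument in its discussion of Dehn--Thurston coordinates: compare the two measures in such coordinates, where $\nu_{\text{Thu}}$ is standard Lebesgue measure and $\mu_{\text{Thu}}$ is $\operatorname{covol}(\Lambda)^{-1}$ times Lebesgue measure, and compute the index of the parity sublattice to be $2^{2g-3+n}$ from one even condition per complementary pair of pants with exactly one redundancy (your connectivity argument and the $S_{1,1}$, $S_{0,4}$ checks match the paper's claim). The one step you defer --- that $\omega_{\text{Thu}}=\sum_i dm_i\wedge dt_i$ holds on the nose for suitably normalized Dehn--Thurston coordinates --- is precisely the nontrivial computation carried out in \cite{MT19} rather than something quotable verbatim from \cite[\S 3.2]{PH92}, and you correctly identify it as the delicate point.
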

$ $

\textit{Dehn-Thurston coordinates.}  Let $N := 3g-3+n$ and $\mathcal{P} := \{\gamma_1,\dots,\gamma_N\}$ be a pants decomposition of $S_{g,n}$. The following theorem, originally due to Dehn, gives an explicit parametrization of the set of integral multi-curves on $S_{g,n}$ in terms of their intersection numbers $m_i$ and their twisting numbers $t_i$ with respect to the curves $\gamma_i$ in $\mathcal{P}$; see \S 1.2 in \cite{PH92} for details.\\

\begin{theorem}
	\label{Dehn_Thurston_coordinates_int}
	There is a parametrization of $\mathcal{ML}_{g,n}(\mathbf{Z})$ by an additive semigroup $\Lambda \subseteq (\mathbf{Z}_{\geq 0} \times \mathbf{Z})^N$. The parameters $(m_i,t_i)_{i=1}^N \in (\mathbf{Z}_{\geq 0} \times \mathbf{Z})^N$ belong to $\Lambda$ if and only if the following conditions are satisfied:
	\begin{enumerate}
		\item For each $i =1,\dots, N$, if $m_i = 0$ then $t_i \geq 0$.
		\item For each complementary region $R$ of $S_{g,n} \backslash \mathcal{P}$, the parameters $m_i$ whose indices correspond to curves $\gamma_i$ of $\mathcal{P}$ bounding $R$ add up to an even number.\\
	\end{enumerate}
\end{theorem}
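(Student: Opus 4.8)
The plan is to construct, for each integral multi-curve $\alpha$ on $S_{g,n}$, its vector of Dehn--Thurston coordinates $(m_i,t_i)_{i=1}^N$ relative to the fixed pants decomposition $\mathcal{P}$, and then to show that the resulting map to $(\mathbf{Z}_{\geq 0} \times \mathbf{Z})^N$ is a bijection onto the subset $\Lambda$ cut out by conditions (1) and (2). First I would put $\alpha$ in minimal position with respect to $\mathcal{P}$, which can always be achieved (and uniquely up to isotopy for the intersection pattern) by taking geodesic representatives for some auxiliary hyperbolic metric. The \emph{intersection number} $m_i := i(\alpha,\gamma_i)$ is then the number of points of $\alpha \cap \gamma_i$; this is manifestly a non-negative integer and is an isotopy invariant. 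Defining the \emph{twisting number} $t_i$ requires a choice of reference arcs transverse to $\gamma_i$ (a ``zero-twisting'' marking on each annular neighborhood of $\gamma_i$, together with the convention for handling the case $m_i=0$, where $t_i$ records instead how many copies of the core curve $\gamma_i$ appear in $\alpha$, forced to be $\geq 0$). I would carry out this bookkeeping pants-by-pants: in each complementary pair of pants $R$, once the three boundary intersection numbers are fixed, the isotopy class of $\alpha\cap R$ (rel boundary, with fixed endpoints) is uniquely determined by the standard classification of arc systems in a pair of pants.

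The core of the argument is the reconstruction and uniqueness statement. For surjectivity onto $\Lambda$: given $(m_i,t_i)\in\Lambda$, I would first place, in each pair of pants $R$ of $S_{g,n}\setminus\mathcal{P}$, the unique (up to isotopy rel $\partial R$) collection of disjoint essential arcs realizing the prescribed boundary intersection numbers $m_i$ on the three cuffs bounding $R$; the parity condition (2) is exactly what guarantees such an arc system exists, since the arcs in a pair of pants pair up the intersection points on the boundary and the count around each cuff must be consistent across the two pieces it separates. Then I would glue these arc systems across each $\gamma_i$, inserting $|t_i|$ Dehn twists (with sign dictating the direction) along $\gamma_i$ before regluing, and when $m_i=0$ adding $t_i\geq 0$ parallel copies of $\gamma_i$. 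This produces an integral multi-curve with the prescribed coordinates. For injectivity: if two multi-curves have the same $(m_i,t_i)$, then after isotoping both into minimal position they agree arc-system-by-arc-system inside every pair of pants (by the uniqueness of arc systems there), and they agree in each annular collar $N_{\gamma_i}$ by the definition of $t_i$; patching these isotopies gives an ambient isotopy carrying one to the other. That $\Lambda$ is an additive semigroup then follows because disjoint union of multi-curves adds intersection numbers and (after a compatible normalization) twisting numbers, and conditions (1)--(2) are preserved under addition.

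I expect the main obstacle to be the careful, convention-dependent definition of the twisting number $t_i$ and the verification that it is well-defined independent of the auxiliary choices (the reference transversals, the hyperbolic metric used to take minimal position, the handling of the $m_i = 0$ degenerate case), together with the precise matching of the ``number of Dehn twists'' picture in the annulus to the combinatorics of how arc endpoints on the two sides of $\gamma_i$ get connected through the neighboring pants. Rather than redo this delicate setup, I would cite the standard treatment: the construction of Dehn--Thurston coordinates and the proof that they identify $\mathcal{ML}_{g,n}(\mathbf{Z})$ with $\Lambda$ is carried out in detail in \cite[\S 1.2]{PH92}, and I would simply indicate the above as the structure of that argument. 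The parity condition (2) and the sign convention (1) are precisely the two compatibility constraints isolated there; everything else is the local arc-counting in a pair of pants plus the twist bookkeeping in an annulus.
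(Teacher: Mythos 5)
Your proposal is correct, and it matches the paper's treatment: the paper does not prove this statement at all, but presents it as a classical result of Dehn and defers to \cite[\S 1.2]{PH92}, which is exactly the standard pants-by-pants arc-system and twist-bookkeeping argument you outline before citing the same reference. Your sketch correctly identifies the two constraints (the parity condition from pairing up arc endpoints in each pair of pants, and the sign convention in the degenerate $m_i=0$ case), so there is nothing to add beyond what the cited source supplies.
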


We refer to any parametrization as in Theorem \ref{Dehn_Thurston_coordinates_int} as a set of \textit{Dehn-Thurston coordinates} of $\mathcal{ML}_{g,n}(\mathbf{Z})$ adapted to $\mathcal{P}$ and to the additive semigroup $\Lambda\subseteq  (\mathbf{Z}_{\geq 0} \times \mathbf{Z})^N$ as the parameter space of such parametrization. By work of Thurston, see 8.3.9 in \cite{Mar16} for details, any set of Dehn-Thurston coordinates of $\mathcal{ML}_{g,n}(\mathbf{Z})$ extends to a parametrization of the whole space $\mathcal{ML}_{g,n}$ of measured geodesic laminations on $S_{g,n}$ in the following sense:\\

\begin{theorem}
	\label{Dehn_Thurston_coordinates_ML}
	Any set of Dehn-Thurston coordinates $(m_i,t_i)_{i=1}^N$ of $\mathcal{ML}_{g,n}(\mathbf{Z})$ with parameter space $\Lambda \subseteq (\mathbf{Z}_{\geq 0} \times \mathbf{Z})^N$ can be extended to a parametrization of $\mathcal{ML}_{g,n}$ by the set
	\[
	\Theta := \left\lbrace(m_i,t_i) \in (\mathbf{R}_{\geq 0} \times \mathbf{R})^N \ | \ m_i = 0 \Rightarrow t_i \geq 0, \ \forall i=1,\dots, N \right\rbrace.
	\]
\end{theorem}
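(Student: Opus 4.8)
The plan is to build the coordinate map $\Phi \colon \mathcal{ML}_{g,n} \to \Theta$ by continuously extending the integral parametrization of Theorem \ref{Dehn_Thurston_coordinates_int} over the dense subset $\mathbf{Q}_{>0} \cdot \mathcal{ML}_{g,n}(\mathbf{Z}) \subseteq \mathcal{ML}_{g,n}$ of rationally weighted multi-curves, and then to check that $\Phi$ is a homeomorphism onto $\Theta$. The intersection coordinates cost nothing: for each $i$ the function $m_i := \iota(\,\cdot\,,\gamma_i)$ (geometric intersection number with $\gamma_i$) is already defined and continuous on all of $\mathcal{ML}_{g,n}$ --- the intersection pairing extends continuously to the space of geodesic currents --- and on $\mathcal{ML}_{g,n}(\mathbf{Z})$ it agrees with the Dehn--Thurston intersection number. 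The real content is thus (i) the continuous extension of the twisting coordinates $t_i$, and (ii) the bijectivity of the resulting map onto the model $\Theta$.

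First I would extend the twisting coordinates. For each $i$ fix an auxiliary simple closed curve $\beta_i$ that meets $\gamma_i$ minimally and is disjoint from the other $\gamma_j$; this is the reference curve with respect to which ``zero twisting along $\gamma_i$'' is measured. On the open locus $\{m_i > 0\}$ there is an identity of the shape $2 t_i = \pm\big(\iota(\,\cdot\,,\beta_i) - f_i(m_1,\dots,m_N)\big)$ valid on rationally weighted multi-curves, where $f_i$ is an explicit piecewise-linear function accounting for the crossings of the arc system with $\beta_i$ that are forced inside the pairs of pants adjacent to $\gamma_i$; since the right-hand side is continuous on $\mathcal{ML}_{g,n}$, the coordinate $t_i$ extends continuously over $\{m_i > 0\}$. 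On the wall $\{m_i = 0\}$, a measured lamination $\lambda$ with $\iota(\lambda,\gamma_i) = 0$ has $\gamma_i$ among its leaves with some total transverse weight $\geq 0$, and one sets $t_i(\lambda)$ equal to that weight; the point to verify is that this value is the limit of $t_i$ along any sequence $\lambda_k \to \lambda$ with $m_i(\lambda_k) > 0$, which follows by tracking how the transverse mass near $\gamma_i$ concentrates in the limit. With this, $\Phi$ is well defined and continuous, it lands in $\Theta$ because the constraints $m_i \geq 0$ and ``$m_i = 0 \Rightarrow t_i \geq 0$'' are automatic, and the parity condition (2) of Theorem \ref{Dehn_Thurston_coordinates_int} dissolves once real weights are allowed.

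It remains to see that $\Phi$ is a homeomorphism onto $\Theta$, and here I would argue directly with the pants decomposition. Cutting $S_{g,n}$ along $\mathcal{P}$, any $\lambda \in \mathcal{ML}_{g,n}$ restricts to a disjoint union of weighted arcs (together, possibly, with cuff-parallel closed leaves) in each complementary pair of pants $P$; up to isotopy this weighted arc system in $P$ is freely parametrized by the three nonnegative real weights on the cuffs of $P$ --- the individual arc weights being $\tfrac12(m_j + m_k - m_i)$ for the connecting arcs together with their evident degenerations (self-connecting arcs) when a triangle inequality fails --- so over $\mathbf{R}$ there is no obstruction to realizing an arbitrary triple. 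Re-gluing the cuffs with shearing recorded by $t_i$ (and inserting a closed leaf of weight $t_i$ along $\gamma_i$ when $m_i = 0$) reconstructs $\lambda$ from $(m_i,t_i)$, giving injectivity, and produces a lamination with prescribed coordinates from any point of $\Theta$, giving surjectivity. Finally $\Phi$ is proper: the curves $\gamma_i$ together with the reference curves $\beta_i$ form a filling system on $S_{g,n}$, so $\lambda \mapsto \sum_i\big(\iota(\lambda,\gamma_i) + \iota(\lambda,\beta_i)\big)$ is a proper exhaustion function of $\mathcal{ML}_{g,n}$; since each $|t_i|$ is dominated by a piecewise-linear function of the $m_j$ and of $\iota(\,\cdot\,,\beta_i)$, bounded Dehn--Thurston coordinates confine $\lambda$ to a compact set. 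A proper continuous bijection between locally compact Hausdorff spaces is a homeomorphism, so $\Phi$ is the desired parametrization. (Alternatively, on the top stratum $\{m_i > 0 \ \forall i\}$ one may invoke invariance of domain, handling the lower-dimensional faces of $\Theta$ by induction.)

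The step I expect to be the genuine obstacle is the continuity of the twisting coordinates across the walls $\{m_i = 0\}$, where the definition of $t_i$ changes character --- from a signed count of crossings with $\beta_i$ to the weight of a closed leaf --- and, relatedly, the verification that the piecewise-integral-linear structure on $\mathcal{ML}_{g,n}$ coming from Dehn--Thurston train-track charts is precisely the one under which $\Phi$ is bicontinuous, i.e.\ that the reconstruction above misses no measured lamination and is unambiguous. These are the points at which Thurston's train-track machinery, rather than soft continuity arguments, does the work; the rest is bookkeeping with weighted arc systems in pairs of pants.
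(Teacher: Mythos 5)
The paper does not actually prove this statement: it is quoted as background, attributed to Thurston with a pointer to \cite{Mar16} (8.3.9), and the only way it is used later is to transport the Thurston measure to $2^{-(2g-3+n)}$ times Lebesgue measure on $\Theta$. So your proposal is not competing with an argument in the paper, and should be judged on its own.

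As written it contains a genuine error, and it is exactly at the step you flagged as the obstacle. The twist coordinate $t_i$ does \emph{not} extend continuously across the wall $\{m_i=0\}$ with the convention $t_i\geq 0$ there, and the resulting map $\Phi\colon\mathcal{ML}_{g,n}\to\Theta$ cannot be a homeomorphism onto $\Theta$ with its subspace topology, for a soft reason: $\mathcal{ML}_{g,n}$ is homeomorphic to $\mathbf{R}^{6g-6+2n}$, while $\Theta\subseteq(\mathbf{R}_{\geq 0}\times\mathbf{R})^N$ is not open in $\mathbf{R}^{2N}$, so by invariance of domain no homeomorphism between them exists. Concretely, fix the other coordinates and let $\lambda_s$ have $i$-th coordinates $(m_i,t_i)=(s,-w)$ with $w>0$; as $s\to 0^+$ these converge in $\mathcal{ML}_{g,n}$ to a lamination containing $\gamma_i$ as a leaf of weight $w$, whose $\Theta$-coordinate is $(0,+w)$, while the coordinates $(s,-w)$ converge to $(0,-w)\notin\Theta$. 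Your proposed verification ``by tracking how the transverse mass concentrates'' would surface precisely this sign-folding: what is true is that the coordinates give a homeomorphism onto the quotient of $(\mathbf{R}_{\geq 0}\times\mathbf{R})^N$ by $(0,t_i)\sim(0,-t_i)$ in each factor, of which $\Theta$ is only a set-theoretic section. Consequently the closing argument (``proper continuous bijection, hence homeomorphism'', or invariance of domain on the top stratum) proves something false and must be abandoned.

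The fix is to aim at what the theorem (and the paper's use of it) actually requires: a \emph{bijection} of $\mathcal{ML}_{g,n}$ with $\Theta$ extending the integral parametrization, together with the folded-quotient description if one wants the topology. For that, the second and third paragraphs of your proposal are essentially the standard argument: restrict $\lambda$ to each pair of pants, where the weighted arc system is determined by the three nonnegative cuff weights (the parity constraint indeed disappears over $\mathbf{R}$), and reconstruct $\lambda$ from $(m_i,t_i)\in\Theta$ by regluing with shearing $t_i$, inserting an atom on $\gamma_i$ of weight $t_i$ when $m_i=0$; the inverse map $\Theta\to\mathcal{ML}_{g,n}$ is continuous, and injectivity/surjectivity go through as you describe (this is where the train-track/arc bookkeeping lives, as in \cite{PH92} and \cite{Mar16}). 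Only the claims of continuity of $\Phi$ at the walls and of $\Phi$ being a homeomorphism onto $\Theta$ need to be deleted; note that the functions the paper actually needs to be continuous, namely $m_i$ and $|t_i|$ (hence the combinatorial length $L_{\mathcal{P}}(X,\cdot)$), are well defined on the folded quotient and so survive.
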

$ $

We refer to any parametrization as in Theorem \ref{Dehn_Thurston_coordinates_ML} as a set of \textit{Dehn-Thurston coordinates} of $\mathcal{ML}_{g,n}$ adapted to $\mathcal{P}$ and to the set $\Theta \subseteq  (\mathbf{R}_{\geq 0} \times \mathbf{R})^N$ as the parameter space of such parametrization. For any such parametrization, the action of the full right Dehn twist along the cuff $\gamma_i$ of $\mathcal{P}$ on $\mathcal{ML}_{g,n}$ can be described in coordinates as $t_i \mapsto t_i + m_i$, leaving the other parameters constant. \\

Notice that the additive semigroup $\Lambda \subseteq (\mathbf{Z}_{\geq 0} \times \mathbf{Z})^N$ has index $2^{2g-3+n}$. Indeed, there is one even condition imposed on $\Lambda$ for every complementary region of $S_{g,n} \backslash \mathcal{P}$, of which there are $2g-2+n$ in total, and one of these conditions is redundant. It follows that the Thurston measure $\mu_{\text{Thu}}$ on $\mathcal{ML}_{g,n}$ corresponds to $2^{-(2g-3+n)}$ times the standard Lebesgue measure on $\Theta$.\\

\textit{Mirzakhani's integration formulas.} We briefly review Mirzakhani's integration formulas; see \cite{Mir07a}, \cite{Mir07c}, and \cite{Mir08b} for details, or \cite{Wol10} for a unified discussion. We will need to consider moduli spaces of hyperbolic surfaces with geodesic boundary. Let $g,n \geq 0$ be integers such that $2 - 2g - n < 0$ and $\mathbf{b} := (b_1,\dots,b_n) \in \mathbf{R}^n$ be a vector with non-negative entries. We denote by $\mathcal{M}_{g,n}(\mathbf{b})$ the moduli space of connected, oriented, complete, finite area hyperbolic surfaces of genus $g$ with $n$ labeled geodesic boundary components of lengths $b_1,\dots,b_n$; if $b_i = 0$ for some $i \in \{1, \dots, n\}$, we interpret the corresponding boundary component as a cusp. Just as in the case of surfaces without boundary, these moduli spaces carry natural Weil-Petersson volume forms. The following result, corresponding to Theorem 4.2 in \cite{Mir08b}, describes the behavior of the total Weil-Petersson volume of these moduli spaces as a function of the lengths $b_1,\dots,b_n$ of the boundary components.\\

\begin{theorem}\cite[Theorem 4.2]{Mir08b}
	\label{theo:vol pol}
	Let $g,n \geq 0$ be integers such that $2 - 2g - n < 0$. For vectors $\mathbf{b} := (b_1,\dots,b_n) \in \mathbf{R}^n$ with non-negative entries, the total Weil-Petersson volume 
	\[
	V_{g,n}(b_1,\dots,b_n) := \text{Vol}_{\text{wp}}(\mathcal{M}_{g,n}(\mathbf{b}))
	\]
	of the moduli space $\mathcal{M}_{g,n}(\mathbf{b})$ is a polynomial in $b_1^2,\dots,b_n^2$ of degree $3g-3+n$ all of whose coefficients are positive.\\
\end{theorem}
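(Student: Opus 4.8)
The plan is to prove the statement by induction on the complexity $N := 3g-3+n$, using Mirzakhani's recursion for Weil-Petersson volumes. Finiteness of $V_{g,n}(\mathbf{b})$ for each fixed $\mathbf{b}$, and its smooth dependence on $\mathbf{b}$, are known a priori, so throughout $V_{g,n}$ is treated as a well-defined smooth function of the boundary lengths. The base case is $(g,n) = (0,3)$: for every $\mathbf{b}$ the space $\mathcal{M}_{0,3}(\mathbf{b})$ is a single point, so $V_{0,3} \equiv 1$, a polynomial in $b_1^2,b_2^2,b_3^2$ of degree $0 = 3\cdot 0 - 3 + 3$ with positive constant coefficient. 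In the inductive step I would assume the result for all $(g',n')$ with $3g'-3+n' < N$ and deduce it for $(g,n)$ with $3g-3+n = N$, treating $n \geq 1$ and $n = 0$ separately.

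For $n \geq 1$, the engine is Mirzakhani's generalized McShane identity: for $X \in \mathcal{M}_{g,n}(\mathbf{b})$ it writes the length $b_1$ of the first boundary component as an absolutely convergent sum, indexed by the embedded pairs of pants and embedded one-holed tori in $X$ having $\partial_1 X$ as a boundary component, of explicit functions of $\mathbf{b}$ and of the lengths of the remaining cuffs — terms $\mathcal{D}(b_1,\ell_{\gamma_1}(X),\ell_{\gamma_2}(X))$ when the other cuffs are interior curves $\gamma_1,\gamma_2$, and terms $\mathcal{R}(b_1,b_j,\ell_{\gamma}(X))$ for pants with cuffs $\partial_1 X$, $\partial_j X$, and an interior curve $\gamma$. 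Integrating this identity over $\mathcal{M}_{g,n}(\mathbf{b})$ against the Weil-Petersson volume form and applying Mirzakhani's integration formula turns each of the finitely many topological types of configurations into a one- or two-dimensional integral over $\mathbf{R}_{>0}$, resp.\ $\mathbf{R}_{>0}^2$, of the corresponding kernel against a product of the volume polynomials $V_{g',n'}$ of the connected pieces obtained by cutting $X$ along the interior cuffs; each piece has complexity strictly less than $N$, so the inductive hypothesis applies. Differentiating in $b_1$ puts this into the clean recursive form $\partial_{b_1}\bigl(b_1\,V_{g,n}(\mathbf{b})\bigr) = (\text{a sum of explicit integral transforms of lower-complexity } V\text{'s})$.

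The heart of the matter is checking that these integral transforms map polynomials in the squared variables with positive coefficients to polynomials of the same kind. The two basic facts are: $\int_0^\infty x^{2k+1} H(x,t)\,dx$ is an even polynomial in $t$ of degree $2k+2$ all of whose coefficients are positive — explicitly positive rational multiples of powers of $\pi^2$, ultimately because the numbers $\zeta(2j)\bigl(2^{2j+1}-4\bigr)$ are non-negative for all $j\geq 0$ — and $\int_0^\infty x^{2k+1}\bigl(H(x,t+s)+H(x,t-s)\bigr)\,dx$ is a polynomial in $s^2,t^2$ of total degree $2k+2$ with positive coefficients; the double-integral kernels reduce to the single-variable case by integrating over the level sets of $x+y$, which introduces only a positive Beta-function factor. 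Feeding the inductive hypothesis through these transforms shows $\partial_{b_1}\bigl(b_1 V_{g,n}\bigr)$ is a polynomial in $b_1^2,\dots,b_n^2$ with positive coefficients; since $b_1 V_{g,n}$ vanishes at $b_1 = 0$, integrating in $b_1$ and dividing by $b_1$ recovers $V_{g,n}$ as a polynomial in $b_1^2,\dots,b_n^2$ with positive coefficients, and tracking degrees through the recursion — each cut lowers complexity by $1$ or $2$ while the corresponding transform raises the degree by exactly the compensating amount — pins the degree at $3g-3+n$. The case $n = 0$ is reduced to $n = 1$ through the relation $\partial_{b}V_{g,1}(b)\big|_{b = 2\pi i} = 2\pi i\,(2g-2)\,V_{g,0}$, or, to avoid the complex substitution, by directly integrating an appropriate McShane-type identity over $\mathcal{M}_{g,1}$ as Mirzakhani does.

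I expect the main obstacle to be proving the generalized McShane identity itself: this is a genuine theorem of hyperbolic geometry, obtained by studying, for each point of $\partial_1 X$, the simple geodesic ray entering $X$ orthogonally there — it either stays simple and spirals onto a measured geodesic lamination or eventually fails to be simple or embedded — and showing that the "bad" set of starting points has full measure in $\partial_1 X$, with the complementary open intervals in natural bijection with the embedded pants and one-holed tori appearing in the identity and with lengths computable from the geometry of those subsurfaces. The subsidiary technical points are the exact evaluation of the integral transforms (with the degree and positivity bookkeeping above) and the correct treatment of the stabilizer factors $2^{-M}$ in Mirzakhani's integration formula. For the weaker assertion of polynomiality alone there is the alternative Duistermaat-Heckman route — realize $\mathcal{M}_{g,n}(\mathbf{b})$ as a symplectic reduction so that on $\overline{\mathcal{M}}_{g,n}$ the class of $\omega_{\text{wp}}(\mathbf{b})$ equals $\omega_{\text{wp}}(\mathbf{0}) + \tfrac12 \sum_i b_i^2 \psi_i$ and integrate its top power — but extracting positivity from that approach still requires separate knowledge that the $\psi$-intersection numbers are positive.
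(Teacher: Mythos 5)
This statement is not proved in the paper: it is imported verbatim as background from Mirzakhani (\cite[Theorem 4.2]{Mir08b}, which rests on her McShane-identity recursion for Weil--Petersson volumes), so there is no internal argument to compare yours against. Your outline --- generalized McShane identity, Mirzakhani's integration formula, the explicitly positive integral transforms $F_{2k+1}$ with the Beta-factor reduction of the two-variable kernels, induction on $3g-3+n$ with the degree bookkeeping, and the separate treatment of $n=0$ --- is an accurate reconstruction of Mirzakhani's original proof, i.e.\ essentially the same approach as the cited source, with the generalized McShane identity correctly identified as the main external input.
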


Moduli spaces as the ones described above can also be defined for topological surfaces with several connected components; they correspond (up to taking finite covers) to the product of the moduli spaces of the components of the surface. The volume polynomial of the corresponding moduli space is (up to a rational multiplicative factor) the product of the volume polynomials of the moduli spaces of the components of the surface.\\

Consider an ordered topological multi-curve $\gamma := (\gamma_1,\dots,\gamma_k)$ on $S_{g,n}$. We denote by $S_{g,n}(\gamma)$ the topological surface obtained by cutting $S_{g,n}$ along $\gamma$; it can have several connected components. For any vector $(x_1,\dots,x_k) \in \mathbf{R}^n$ with non-negative entries, we denote by $\mathcal{M}_{g,n}(\gamma, \mathbf{x})$ the moduli space of all oriented, complete, finite volume hyperbolic structures on $S_{g,n}(\gamma)$ with geodesic boundary components whose lengths are given by $x_1,\dots,x_k$ according to which curve $\gamma_i$ the boundary component comes from. We also denote by $V_{g,n}(\gamma,\mathbf{x})$ the total Weil-Petersson volume of the moduli space $\mathcal{M}_{g,n}(\gamma, \mathbf{x})$.\\

Given an ordered topological multi-curve $\gamma = (\gamma_1,\dots,\gamma_k)$ on $S_{g,n}$ and positive weights $\mathbf{a} := (a_1,\dots,a_k) \in \mathbf{R}^n$, we denote by $\mathbf{a} \cdot \gamma$ the unordered weighted multi-curve on $S_{g,n}$ given by
\[
\mathbf{a} \cdot \gamma = a_1 \gamma_1 + \cdots + a_k \gamma_k.
\]
The following theorem, corresponding to Theorem 4.1 in \cite{Mir08b}, gives a formula for the integral 
\[
P(L,\mathbf{a} \cdot \gamma) := \int_{\mathcal{M}_{g,n}}  \ s(X,\mathbf{a} \cdot \gamma,L) \ d\widehat{\mu}_{\text{wp}}(X)
\]
in terms of the volume polynomial $V_{g,n}(\gamma,\mathbf{x})$ of the moduli spaces $\mathcal{M}_{g,n}(\gamma,\mathbf{x})$ associated to the surface obtained by cutting $S_{g,n}$ along $\gamma$.\\

\begin{theorem}\cite[Theorem 4.1]{Mir08b}
	\label{theo:count_integral}
	For any ordered topological multi-curve $\gamma := (\gamma_1,\dots,\gamma_k)$ on $S_{g,n}$ and any positive weights $\mathbf{a} := (a_1,\dots,a_k) \in \mathbf{R}^k$, the integral over $\mathcal{M}_{g,n}$ of $s(X,\mathbf{a} \cdot \gamma,L)$ is given by
	\[
	P(L, \mathbf{a} \cdot \gamma) = \kappa(\gamma, \mathbf{a}) \cdot \int_{\mathbf{a} \cdot \mathbf{x} \leq T} V_{g,n}(\gamma,\mathbf{x}) \ \mathbf{x} \cdot d \mathbf{x},
	\]
	where $\mathbf{x} = x_1 \cdots x_k$, $d\mathbf{x} = dx_1 \cdots dx_k$, and $\kappa(\gamma,\mathbf{a}) \in \mathbf{Q}_{>0}$ is a constant depending only on $\gamma$ and $\mathbf{a}$ and taking only finitely many values as $\mathbf{a}$ varies.\\
\end{theorem}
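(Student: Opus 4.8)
The plan is to prove Theorem \ref{theo:count_integral} by an \emph{unfolding} argument over an intermediate cover of moduli space adapted to the multi-curve $\gamma$, followed by a disintegration of the Weil-Petersson volume form along Fenchel-Nielsen coordinates relative to $\gamma$ via Wolpert's magic formula. Throughout write $\Gamma := \text{Mod}_{g,n}$, let $\text{Stab}(\gamma) \leq \Gamma$ denote the stabilizer of the isotopy class of the ordered multi-curve $\gamma = (\gamma_1,\dots,\gamma_k)$, and set $F := \mathbbm{1}_{[0,L]}$.

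First I would rewrite the counting function as a sum over cosets. Since the orbit $\Gamma \cdot \gamma$ is in bijection with $\Gamma/\text{Stab}(\gamma)$, and since the length function is affine so that $\ell_{\phi_*(\mathbf{a}\cdot\gamma)}(X) = \sum_{i=1}^k a_i\, \ell_{\phi_*(\gamma_i)}(X)$, we have
\[
s(X,\mathbf{a}\cdot\gamma,L) = \sum_{[\phi] \in \Gamma/\text{Stab}(\gamma)} F\bigl( \ell_{\phi_*(\mathbf{a}\cdot\gamma)}(X) \bigr).
\]
Pulling back to $\mathcal{T}_{g,n}$ and pushing forward to the intermediate moduli space $\mathcal{M}_{g,n}^{\gamma} := \mathcal{T}_{g,n}/\text{Stab}(\gamma)$, which is an (orbifold) cover of $\mathcal{M}_{g,n}$, the standard unfolding identity gives
\[
\int_{\mathcal{M}_{g,n}} s(X,\mathbf{a}\cdot\gamma,L)\, d\widehat{\mu}_{\text{wp}}(X) = \int_{\mathcal{M}_{g,n}^{\gamma}} F\bigl(\ell_{\mathbf{a}\cdot\gamma}(X)\bigr)\, d\widehat{\mu}_{\text{wp}}(X),
\]
where on the right $\ell_{\mathbf{a}\cdot\gamma}$ is the length of the distinguished geodesic multi-curve on $\mathcal{M}_{g,n}^{\gamma}$.

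Next I would coordinatize $\mathcal{M}_{g,n}^{\gamma}$. Using Fenchel-Nielsen-type coordinates adapted to $\gamma$, a point of $\mathcal{M}_{g,n}^{\gamma}$ is described by the lengths $\mathbf{x} = (x_1,\dots,x_k) \in \mathbf{R}_{>0}^k$ of the $\gamma_i$, twist parameters $\tau_i$, and a point of the moduli space $\mathcal{M}_{g,n}(\gamma,\mathbf{x})$ of the cut surface $S_{g,n}(\gamma)$ with geodesic boundary of the prescribed lengths. The subtlety is that $\text{Stab}(\gamma)$ contains the Dehn twists about the $\gamma_i$ — so each $\tau_i$ lives in $\mathbf{R}/x_i\mathbf{Z}$ — together with a finite group of residual symmetries (mapping classes permuting the $\gamma_i$ within their topological type, interchanging the two sides of a nonseparating component, or realizing symmetries of $S_{g,n}(\gamma)$); these contribute a rational constant, and since the only part of this group affected by the weights is the subgroup of permutations preserving $\mathbf{a}\cdot\gamma$, the resulting constant $\kappa(\gamma,\mathbf{a})$ depends on $\mathbf{a}$ only through the coincidence pattern of the weights and hence takes finitely many values. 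By Wolpert's magic formula the Weil-Petersson form splits on this chart as $d\widehat{\mu}_{\text{wp}} = dx_1\, d\tau_1 \cdots dx_k\, d\tau_k \wedge v_{\text{wp}}^{\text{cut}}$.

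Finally I would integrate in stages: integrating $v_{\text{wp}}^{\text{cut}}$ over the fiber $\mathcal{M}_{g,n}(\gamma,\mathbf{x})$ yields $V_{g,n}(\gamma,\mathbf{x})$; integrating each $\tau_i$ over its period of length $x_i$ yields the factor $\mathbf{x} = x_1 \cdots x_k$; and what remains is the integral of $V_{g,n}(\gamma,\mathbf{x})\, \mathbf{x}$ over the region $\{\mathbf{a}\cdot\mathbf{x} \leq L\}$ cut out by $F(\ell_{\mathbf{a}\cdot\gamma}) = 1$, all multiplied by the accumulated symmetry factor $\kappa(\gamma,\mathbf{a}) \in \mathbf{Q}_{>0}$. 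This is exactly the claimed formula. I expect the main obstacle to be the bookkeeping around $\text{Stab}(\gamma)$: pinning down precisely the finite group sitting between the twist subgroup and the full stabilizer, verifying that it acts with the right covering behavior so that the disintegration above is valid off a measure-zero set, and confirming that its order — hence $\kappa$ — is rational, positive, and locally constant in $\mathbf{a}$.
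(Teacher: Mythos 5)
This theorem is quoted from Mirzakhani (\cite[Theorem 4.1]{Mir08b}); the present paper offers no proof of its own, and your unfolding argument over the intermediate cover $\mathcal{T}_{g,n}/\text{Stab}(\gamma)$, combined with Wolpert's magic formula and integration in stages (twists over periods $x_i$, fibers giving $V_{g,n}(\gamma,\mathbf{x})$), is essentially Mirzakhani's original proof. The one imprecision is your first display: with $\text{Stab}(\gamma)$ the stabilizer of the \emph{ordered} tuple, the sum over $\Gamma/\text{Stab}(\gamma)$ overcounts $s(X,\mathbf{a}\cdot\gamma,L)$ by the finite index of that subgroup in the stabilizer of the weighted multi-curve $\mathbf{a}\cdot\gamma$ (mapping classes permuting components of equal weight and type), a constant depending only on the coincidence pattern of $\mathbf{a}$ which you do correctly absorb into $\kappa(\gamma,\mathbf{a})$ later, so the argument stands.
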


Proposition \ref{prop:mir_freq} follows directly from Theorems \ref{theo:vol pol} and  \ref{theo:count_integral}.\\

\section{Square-integrability of the Mirzakhani function}

$ $

\textit{Notation}. For the rest of this paper, $N := 3g-3+n$ will denote the number of connected components of a pair of pants decomposition of $S_{g,n}$.\\ 

\textit{The Mirzakhani function near the cusp.} Let us first review Mirzakhani's original description of the behavior of the function $B \colon \mathcal{M}_{g,n} \to \mathbf{R}$ near the cusp. Recall the definition of the function $R \colon \mathbf{R}_+ \to \mathbf{R}_+$ in (\ref{eq:fnR}):
\[
R(x) = \frac{1}{x \cdot | \text{log}(x)|}.
\]
The following bounds, which describe the values of $B(X)$ for points $X \in \mathcal{M}_{g,n}$ near the cusp in terms of the lengths of short simple closed geodesics, are a direct consequence of Proposition 3.6 in \cite{Mir08b}:\\

\begin{proposition}
	\label{mir_original_bound}
	For all sufficiently small $\epsilon >0$, there are constants $C_1, C_2 > 0$ such that for all $X \in \mathcal{M}_{g,n}$,
	\[
	C_1 \cdot \prod_{\gamma \colon \ell_{\gamma}(X) \leq \epsilon} R(\ell_\gamma(X)) \leq	B(X) \leq C_2 \cdot \prod_{\gamma \colon \ell_{\gamma}(X) \leq \epsilon} \frac{1}{\ell_\gamma(X)}.
	\]
	where the products range over all simple closed geodesics $\gamma$ in $X$ of length $\leq \epsilon$.
\end{proposition}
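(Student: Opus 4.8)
The plan is to pass to Dehn--Thurston coordinates adapted to a pants decomposition that is bounded on $X$, translate the condition $\ell_\lambda(X) \leq 1$ into an explicit region in those coordinates, and reduce the problem to a Lebesgue volume computation. Concretely, I would first fix $\epsilon > 0$ smaller than the universal constant coming from Theorem \ref{theo:collar} (so that no two simple closed geodesics of length $\leq \epsilon$ on a hyperbolic surface intersect), smaller than $1$ (so Theorem \ref{theo:bers} applies), and smaller than $1/e$. Given $X \in \mathcal{M}_{g,n}$, let $\gamma_1, \dots, \gamma_k$ be the simple closed geodesics of length $\leq \epsilon$ on $X$; by the choice of $\epsilon$ they are pairwise disjoint and pairwise non-isotopic, so by Theorem \ref{theo:bers} they extend to a pants decomposition $\mathcal{P} = \{\gamma_1, \dots, \gamma_N\}$ of $S_{g,n}$ with $\ell_{\gamma_i}(X) \leq L_{g,n}$ for all $i$; write $\ell_i := \ell_{\gamma_i}(X)$. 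Since $\gamma_1, \dots, \gamma_k$ are \emph{all} the simple closed geodesics of length $\leq \epsilon$ on $X$, we have $\ell_i > \epsilon$ for $i > k$, so $\{\, i : \ell_i \leq \epsilon \,\} = \{1, \dots, k\}$ and the products in the statement range exactly over $\gamma_1, \dots, \gamma_k$. Using Dehn--Thurston coordinates $(m_i, t_i)_{i=1}^N$ for $\mathcal{ML}_{g,n}$ adapted to $\mathcal{P}$ as in Theorem \ref{Dehn_Thurston_coordinates_ML}, and that under this parametrization $\mu_{\text{Thu}}$ is $2^{-(2g-3+n)}$ times Lebesgue measure on the parameter space $\Theta$, one obtains
\[
B(X) = 2^{-(2g-3+n)} \cdot \operatorname{Leb}\left( \left\{\, (m_i, t_i)_{i=1}^N \in \Theta : \ell_\lambda(X) \leq 1 \,\right\} \right),
\]
where $\ell_\lambda(X)$ is viewed as a function of the coordinates.

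The heart of the argument is the following two-sided comparison, which I would establish by hyperbolic geometry in the collars of $\mathcal{P}$: there is $C_0 = C_0(g,n) \geq 1$ such that for every $\lambda \in \mathcal{ML}_{g,n}$ with coordinates $(m_i, t_i)$,
\[
\frac{1}{C_0} \sum_{i=1}^N \bigl( m_i + |t_i| \, \ell_i \bigr) \ \leq \ \ell_\lambda(X) \ \leq \ C_0 \sum_{i=1}^N \left( m_i \, \log\left( \tfrac{1}{\ell_i} + 2 \right) + |t_i| \, \ell_i \right).
\]
For the upper bound I would take the representative of $\lambda$ in standard position relative to $\mathcal{P}$: it crosses each collar $N_{\gamma_i}$ of Theorem \ref{theo:collar} exactly $m_i$ times, with length per crossing comparable to the width $2w(\ell_i)$ and hence to $\log(\tfrac{1}{\ell_i} + 2)$ since $w(x)/|\log x| \to 1$ as $x \to 0^+$; it spends bounded length in each pair of pants of $\mathcal{P}$, whose cuffs have length $\leq L_{g,n}$; and the twisting contributes a total comparable to $|t_i| \ell_i$; the geodesic representative is no longer than this. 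For the lower bound, each of the $m_i$ essential crossings of the embedded annulus $N_{\gamma_i}$ has length at least $2w(\ell_i) \geq 2w(L_{g,n}) > 0$, so $\ell_\lambda(X) \geq 2 w(L_{g,n}) \max_i m_i \geq \tfrac{2 w(L_{g,n})}{N} \sum_i m_i$; and a twist--length estimate gives $\ell_\lambda(X) \geq c \, |t_i| \, \ell_i$ for each $i$ and some $c = c(g,n) > 0$, reducing when $m_i = 0$ to the identity $\ell_\lambda(X) \geq t_i \ell_i$ for the component $t_i \gamma_i$ of $\lambda$. I expect the twist--length estimate, and the uniformity of $C_0$ in $X$ --- in particular its independence of the $X$-dependent decomposition $\mathcal{P}$ --- to be the main obstacle; these estimates are exactly the content underlying Proposition 3.6 of \cite{Mir08b}, whose argument I would follow. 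I note that the sharper upper bound of Theorem \ref{theo:main_res_1} would require upgrading the lower comparison above to $\ell_\lambda(X) \geq C_0^{-1} \sum_i ( m_i \log(\tfrac{1}{\ell_i} + 2) + |t_i| \ell_i )$, reflecting that crossing a thin collar costs a length comparable to $\log(1/\ell_i)$ rather than just a constant.

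Granting the comparison, both bounds on $B(X)$ follow from the elementary identity $\operatorname{Leb}\{\, x \in \mathbf{R}_{\geq 0}^d : \sum_j a_j x_j \leq s \,\} = s^d / (d! \prod_j a_j)$ for $a_1, \dots, a_d > 0$. For the lower bound, the upper comparison shows that $\{\ell_\lambda(X) \leq 1\}$ contains, up to a Lebesgue-null set, the region $\{\, \sum_i ( m_i \log(\tfrac{1}{\ell_i} + 2) + |t_i| \ell_i ) \leq 1/C_0 \,\}$, whose Lebesgue measure equals a constant depending only on $g, n$ times $\prod_{i=1}^N ( \ell_i \log(\tfrac{1}{\ell_i} + 2) )^{-1}$; for $i > k$ the factors $\ell_i \log(\tfrac{1}{\ell_i} + 2)$ lie in a compact subinterval of $(0, \infty)$ depending only on $g, n, \epsilon$, while for $i \leq k$ they are comparable to $\ell_i |\log \ell_i| = 1/R(\ell_i)$, so $B(X) \geq C_1 \prod_{i=1}^k R(\ell_i)$. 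For the upper bound, the lower comparison shows that $\{\ell_\lambda(X) \leq 1\}$ is contained in $\{\, (m_i) : m_i \geq 0, \ \sum_i m_i \leq C_0 \,\} \times \prod_{i=1}^N [ -C_0/\ell_i, \, C_0/\ell_i ]$, of Lebesgue measure at most $(C_0^N / N!) \prod_{i=1}^N (2C_0/\ell_i)$; since $1/\ell_i \leq 1/\epsilon$ for $i > k$, this gives $B(X) \leq C_2 \prod_{i=1}^k 1/\ell_i$. As $C_1$ and $C_2$ depend only on $g, n, \epsilon$ and $\{\gamma_1, \dots, \gamma_k\}$ is exactly the set of simple closed geodesics of length $\leq \epsilon$ on $X$, this is the asserted estimate.
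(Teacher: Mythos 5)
Your proposal is correct and follows essentially the same route as the paper: complete the short geodesics to a Bers-bounded pants decomposition, compare hyperbolic length with a combinatorial length in Dehn--Thurston coordinates, and compute the Thurston (Lebesgue) measure of the resulting region, with the key two-sided comparison being exactly the content of Mirzakhani's Proposition 3.5/3.6, which the paper likewise quotes (as Lemma \ref{length_comparison_lemma}) rather than reproves. Your closing remark is also accurate: upgrading the lower comparison to include the $w(\ell_i)\asymp\log(1/\ell_i)$ factor on the $m_i$ terms is precisely what the paper's Theorem \ref{theo:mir_new_bound} uses to sharpen the upper bound to $\prod R(\ell_\gamma(X))$.
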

$ $

Theorem \ref{theo:main_res_1} corresponds to the following improvement of the upper bound in Proposition \ref{mir_original_bound}:\\

\begin{theorem}
	\label{theo:mir_new_bound}
	For all sufficiently small $\epsilon >0$, there is a constant $C > 0$ such that for all $X \in \mathcal{M}_{g,n}$,
	\[
	B(X) \leq C \cdot \prod_{\gamma \colon \ell_{\gamma}(X) \leq \epsilon} R(\ell_\gamma(X)).
	\]
	where the product ranges over all simple closed geodesics $\gamma$ in $X$ of length $\leq \epsilon$.
\end{theorem}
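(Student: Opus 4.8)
The plan is to follow the strategy of Mirzakhani's proof of Proposition 3.6 in \cite{Mir08b} but to replace the crude factor-by-factor bound $\prod 1/\ell_\gamma(X)$ by the sharper $\prod R(\ell_\gamma(X))$, the improvement coming entirely from carefully tracking the contribution of the twist variables. First I would fix a small $\epsilon > 0$ smaller than the collar constant, so that the simple closed geodesics of length $\leq \epsilon$ on $X$ are pairwise disjoint; call them $\gamma_1, \dots, \gamma_k$ (with $k \le N$), write $\ell_i := \ell_{\gamma_i}(X)$, and use Theorem \ref{theo:bers} to complete $\{\gamma_1,\dots,\gamma_k\}$ to a Bers pants decomposition $\mathcal{P} = \{\gamma_1,\dots,\gamma_N\}$ of $X$ with all cuffs of length $\leq L_{g,n}$. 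The idea is then to estimate $B(X) = \mu_{\text{Thu}}(\{\lambda : \ell_\lambda(X) \le 1\})$ in the Dehn-Thurston coordinates $(m_i, t_i)_{i=1}^N$ adapted to $\mathcal{P}$ furnished by Theorem \ref{Dehn_Thurston_coordinates_ML}, using that $\mu_{\text{Thu}}$ is (a fixed multiple of) Lebesgue measure on the parameter space $\Theta$.

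The key analytic input is the standard comparison between the length $\ell_\lambda(X)$ of a measured lamination and its Dehn-Thurston coordinates: there is a constant depending only on the Bers bound $L_{g,n}$ (hence only on $g,n$) such that $\ell_\lambda(X)$ is bounded below by a positive multiple of $\sum_i \big( m_i \cdot w(\ell_i) + |t_i| \cdot \ell_i \big)$ roughly speaking, each strand crossing the collar of $\gamma_i$ contributes at least its width $w(\ell_i) \asymp |\log \ell_i|$ to the length, while twisting around $\gamma_i$ by $t_i$ contributes at least $|t_i|\,\ell_i$ once $m_i \ge 1$; for the long cuffs ($i > k$) one simply has a uniform lower bound on $\ell_\lambda(X)$ in terms of the $m_i$ and $t_i$. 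Consequently $\{\ell_\lambda(X) \le 1\}$ is contained, in coordinates, in a region where for each short $\gamma_i$ we have $m_i \lesssim 1/|\log \ell_i|$ and, given $m_i$, the twist $t_i$ ranges over an interval of length $\lesssim (1 + m_i)/\ell_i$ (the $m_i$ appearing because $t_i$ is only constrained modulo the reduction coming from full Dehn twists, but $\ell_\lambda$ grows once $|t_i|$ exceeds a multiple of $m_i$). Summing the Lebesgue volume: for each short $\gamma_i$ the $(m_i,t_i)$-contribution is bounded by $\sum_{0 \le m_i \lesssim 1/|\log\ell_i|} \frac{1 + m_i}{\ell_i} \lesssim \frac{1}{\ell_i |\log \ell_i|} = R(\ell_i)$, where the crucial point is that the number of admissible $m_i$ is $O(1/|\log\ell_i|)$ and each $t_i$-slice has length $O(1/\ell_i)$ (the $m_i \le 1$ terms already give $R(\ell_i)$ and the tail $\sum m_i/\ell_i$ over $m_i \lesssim 1/|\log\ell_i|$ is of the same order up to a constant). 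Meanwhile the long-cuff coordinates $(m_i,t_i)_{i>k}$ contribute a bounded volume depending only on $g,n$ (via $L_{g,n}$). Multiplying over $i = 1,\dots,k$ and absorbing the long-cuff volume and all the comparison constants into a single $C = C(g,n,\epsilon)$ gives $B(X) \le C \prod_{i=1}^k R(\ell_i)$, which is the claim.

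The main obstacle, and the place where more care is needed than in \cite{Mir08b}, is the two-sided bookkeeping of the twist parameter: one must show not merely that $t_i$ is confined to an interval of length $O(1/\ell_i)$ but that, after accounting for the $m_i$ admissible values of the intersection number, the total two-dimensional Lebesgue measure in the $(m_i, t_i)$ plane is $O(R(\ell_i))$ rather than $O(1/\ell_i)$ — i.e. that the extra logarithmic saving from $m_i$ being constrained to $O(1/|\log \ell_i|)$ values is not destroyed by the $t_i$-interval growing linearly in $m_i$. This requires the length lower bound $\ell_\lambda(X) \gtrsim m_i w(\ell_i) + |t_i|\ell_i$ to hold with the width $w(\ell_i)$ (not just a constant) multiplying $m_i$, which is exactly where the collar lemma (Theorem \ref{theo:collar}) and the asymptotic $w(x) \sim |\log x|$ enter; establishing this inequality with constants uniform over all of $\mathcal{M}_{g,n}$ — using only that the complementary thick part has bounded geometry by the Bers bound — is the technical heart of the argument. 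Everything else is a routine integration over $\Theta$ together with the identification of $\mu_{\text{Thu}}$ with a multiple of Lebesgue measure.
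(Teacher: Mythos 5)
Your proposal is correct and follows essentially the same route as the paper: complete the short geodesics to a Bers pants decomposition, pass to Dehn--Thurston coordinates where $\mu_{\text{Thu}}$ is a fixed multiple of Lebesgue measure, use the uniform comparison $\ell_\lambda(X)\asymp\sum_i\bigl(m_i\,w(\ell_{\gamma_i}(X))+|t_i|\,\ell_{\gamma_i}(X)\bigr)$, and bound the measure of the resulting region by $\prod_i 1/\bigl(\ell_{\gamma_i}(X)\,w(\ell_{\gamma_i}(X))\bigr)\lesssim\prod_i R(\ell_{\gamma_i}(X))$ over the short cuffs, with the long cuffs contributing only a constant. The one point you single out as the technical heart --- the length comparison with $w(\ell_i)$ weighting $m_i$, uniform over $\mathcal{M}_{g,n}$ --- is not reproved in the paper but imported directly from Mirzakhani's Proposition 3.5 of \cite{Mir08b} (Lemma~\ref{length_comparison_lemma}, extended to all of $\mathcal{ML}_{g,n}$ by homogeneity and density in Corollary~\ref{length_comparison_lemma_ML}), after which the twist bookkeeping is simpler than you suggest: on the combinatorial unit ball one has $|t_i|\lesssim 1/\ell_{\gamma_i}(X)$ independently of $m_i$, and since $m_i$ is a continuous coordinate (this is a Lebesgue-volume computation, not a lattice-point count) one simply evaluates the exact volume $\frac{2^N}{N!}\prod_i\frac{1}{\ell_{\gamma_i}(X)\,w(\ell_{\gamma_i}(X))}$.
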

$ $

The proof of Theorem \ref{theo:mir_new_bound} follows similar arguments as the ones given by Mirzakhani in the proof of Proposition \ref{mir_original_bound}; more precise estimates are considered when working with the Thurston measure.\\ 

Let us introduce some of the relevant terminology and tools used by Mirzakhani in the proof of Proposition \ref{mir_original_bound}. Fix a pair of pants decomposition $\mathcal{P} := \{\gamma_1,\dots,\gamma_N\}$ of $S_{g,n}$ and let $(m_i,t_i)_{i=1}^N$ be a set of Dehn-Thurston coordinates of $\mathcal{ML}_{g,n}(\mathbf{Z})$ adapted to $\mathcal{P}$; we denote by $\Lambda \subseteq (\mathbf{Z}_{\geq 0} \times \mathbf{Z})^N$ its parameter space and by $(m_i(\gamma),t_i(\gamma))_{i=1}^N$ the coordinates of any integral multi-curve $\gamma \in \mathcal{ML}_{g,n}(\mathbf{Z})$. Given an integral multi-curve $\gamma \in \mathcal{ML}_{g,n}(\mathbf{Z})$ and a marked hyperbolic structure $X \in \mathcal{T}_{g,n}$, we define the combinatorial length of $\gamma$ on $X$ with respect to the pants decomposition $\mathcal{P}$ to be
\begin{equation}
\label{eq:comb_length}
L_{\mathcal{P}}(X,\gamma) := \sum_{i=1}^N (m_i(\gamma) \cdot w(\ell_{\gamma_i}(X)) + |t_i(\gamma)| \cdot \ell_{\gamma_i}(X)),
\end{equation}
where $w \colon \mathbf{R}_{>0} \to \mathbf{R}_{>0}$ is the function
\[
w(x) := \arcsinh \left( \frac{1}{\sinh\left(\frac x 2 \right)}\right)
\]
describing the width of the hyperbolic collar neighborhoods introduced in Theorem \ref{theo:collar}. This definition depends on the choice of Dehn-Thurston coordinates considered.\\

Given $L > 0$, a pair of pants decomposition $\mathcal{P} := \{\gamma_1,\dots,\gamma_N\}$ of $S_{g,n}$, and a marked hyperbolic structure $X \in \mathcal{T}_{g,n}$, we say that $\mathcal{P}$ is $L$-bounded on $X$ if
\[
\ell_{\gamma_i}(X) \leq L, \ \forall i = 1,\dots,N.
\]
$ $

The main tool used by Mirzakhani in the proof of Proposition $\ref{mir_original_bound}$ is the following length comparison lemma, which corresponds to Proposition 3.5 in \cite{Mir08b}:\\

\begin{lemma}\cite[Proposition 3.5]{Mir08b}
	\label{length_comparison_lemma}
	Fix $L > 0$. There is a constant $C > 0$ (depending on $L$) such that for every $X \in \mathcal{T}_{g,n}$ and every pair of pants decomposition $\mathcal{P}$ of $S_{g,n}$ which is $L$-bounded on $X$, there is a set of Dehn-Thurston coordinates $(m_i,t_i)_{i=1}^N$ of $\mathcal{ML}_{g,n}(\mathbf{Z})$ adapted to $\mathcal{P}$ such that for every integral multi-curve $\gamma \in \mathcal{ML}_{g,n}(\mathbf{Z})$ the following bounds hold:
	\[
	\frac{1}{C} \cdot L_{\mathcal{P}}(X,\gamma) \leq  \ell_\gamma(X) \leq C \cdot L_{\mathcal{P}}(X,\gamma).
	\]
\end{lemma}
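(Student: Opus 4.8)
I will prove the two estimates $\ell_\gamma(X)\le C\cdot L_{\mathcal P}(X,\gamma)$ and $L_{\mathcal P}(X,\gamma)\le C\cdot\ell_\gamma(X)$ separately, with $C$ depending only on $L$ (and on $g,n$). The key preliminary is the choice of Dehn--Thurston coordinates adapted to $\mathcal P$: I take them so that $m_i(\gamma)=i(\gamma,\gamma_i)$ is the geometric intersection number of $\gamma$ with $\gamma_i$, and so that the twisting parameter $t_i(\gamma)$ is measured against the reference arc which, in the hyperbolic structure $X$, is the common perpendicular of $\gamma_i$ with the adjacent cuffs (the seams of the adjacent pairs of pants). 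This is the only place $X$ enters the choice of coordinates; its effect is that a strand of a geodesic representative crossing $\gamma_i$ with twisting number $t$ is sheared along $\gamma_i$ by $|t|\,\ell_{\gamma_i}(X)$ up to an additive error bounded in terms of $L$.

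\textbf{Geometric input.} Since $\mathcal P$ is $L$-bounded on $X$, every pair of pants $P$ cut out by $\mathcal P$ has all cuffs of length $\le L$; such a $P$ decomposes into three half-collars around its cuffs together with a core of diameter $\le D(L)$, and every essential arc of $P$ from a cuff $u$ to a cuff $v$ (with $u=v$ allowed) has length within $c(L)$ of $w(\ell_u(X))+w(\ell_v(X))$. By the collar lemma in sharp form (collar width $w(\ell_{\gamma_i}(X))$), the collars $N_{\gamma_i}=\{x:d(x,\gamma_i)\le w(\ell_{\gamma_i}(X))\}$ are pairwise disjoint and each is isometric to a hyperbolic cylinder with core of length $\ell_{\gamma_i}(X)$; a geodesic chord of such a cylinder crossing the core once, whose endpoints on the two boundary circles are displaced by $D$ along the core direction, has length at least $\max\{2w(\ell_{\gamma_i}(X)),\,D-c(L)\}$ by the hyperbolic law of cosines. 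Finally, components of $\gamma$ equal to some cuff $\gamma_i$ (the case $m_i(\gamma)=0$, $t_i(\gamma)>0$) contribute exactly $t_i(\gamma)\,\ell_{\gamma_i}(X)$ to $\ell_\gamma(X)$ and exactly as much to $L_{\mathcal P}(X,\gamma)$, so they may be set aside.

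\textbf{The two bounds.} For the upper bound I build a piecewise-geodesic representative $\gamma'$ of $\gamma$ by putting the $m_i(\gamma)$ strands crossing each $\gamma_i$ in standard position, joining them inside each pair of pants by standard essential arcs, and spiralling the strands along each $\gamma_i$ to realize the twisting parameters. Counting lengths pants-by-pants and along the cuffs gives $\ell(\gamma')\le\sum_i m_i(\gamma)\bigl(w(\ell_{\gamma_i}(X))+c'(L)\bigr)+\sum_i|t_i(\gamma)|\,\ell_{\gamma_i}(X)$; since $\ell_{\gamma_i}(X)\le L$ forces $w(\ell_{\gamma_i}(X))\ge w(L)>0$, the errors $c'(L)\,m_i(\gamma)$ are absorbed into a bounded multiple of $m_i(\gamma)\,w(\ell_{\gamma_i}(X))$, and $\ell_\gamma(X)\le\ell(\gamma')\le C\cdot L_{\mathcal P}(X,\gamma)$. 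For the lower bound, let $\gamma^\ast$ be the geodesic representative; it meets $\gamma_i$ transversally in $m_i(\gamma)$ points lying on $m_i(\gamma)$ distinct geodesic chords of $N_{\gamma_i}$ (a geodesic chord of a cylinder crosses the core at most once). Assigning to the $j$-th such chord its twisting number $t_i^{(j)}$, the chord estimate gives its length $\ge\tfrac{1}{C}\bigl(w(\ell_{\gamma_i}(X))+|t_i^{(j)}|\,\ell_{\gamma_i}(X)\bigr)$ after splitting into the cases where $|t_i^{(j)}|\,\ell_{\gamma_i}(X)$ is large or bounded and again using $w(\ell_{\gamma_i}(X))\ge w(L)$. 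Because the collars $N_{\gamma_i}$ are disjoint, summing over all chords and all $i$ and using $\sum_j|t_i^{(j)}|\ge\tfrac{1}{C}|t_i(\gamma)|-c(L)\,m_i(\gamma)$ (a consequence of the seam-adapted choice of coordinates), with the last error absorbed as before, yields $\ell_\gamma(X)\ge\tfrac{1}{C}\,L_{\mathcal P}(X,\gamma)$.

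\textbf{Main difficulty.} The delicate part is the lower bound, specifically reconciling the topological twisting number $t_i(\gamma)$ read from the Dehn--Thurston coordinates with the actual hyperbolic shearing of $\gamma^\ast$ across $N_{\gamma_i}$: the two agree only up to an additive $O(m_i(\gamma))$, and this discrepancy must be swallowed by the term $m_i(\gamma)\,w(\ell_{\gamma_i}(X))$, which is legitimate precisely because $L$-boundedness gives $w(\ell_{\gamma_i}(X))\ge w(L)>0$. This is what forces both the seam-adapted choice of coordinates and the case split according to whether a cuff is short or of length comparable to $L$. The remaining ingredients --- disjointness and cylinder geometry of the collars, the bounded core of an $L$-bounded pair of pants, and the arc-length estimates therein --- are elementary hyperbolic trigonometry.
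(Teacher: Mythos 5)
The paper does not prove this lemma at all: it is quoted directly as Proposition 3.5 of Mirzakhani's paper \cite{Mir08b}, and only its extension to all of $\mathcal{ML}_{g,n}$ (Corollary \ref{length_comparison_lemma_ML}) is argued here, by homogeneity and density of weighted multi-curves. Your sketch reconstructs essentially Mirzakhani's own collar-lemma argument --- seam-adapted twisting coordinates, an explicit piecewise-geodesic representative for the upper bound, and disjoint collar chords with the $O(m_i)$ twist discrepancy absorbed via $w(\ell_{\gamma_i}(X))\geq w(L)>0$ for the lower bound --- and it is sound, so there is nothing in the paper to contrast it with.
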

$ $

Recall that any set of Dehn-Thurston coordinates $(m_i,t_i)_{i=1}^N$ of $\mathcal{ML}_{g,n}(\mathbf{Z})$ with parameter space $\Lambda \subseteq (\mathbf{Z}_{\geq 0} \times \mathbf{Z})^N$ can be extended to give a parametrization of the space $\mathcal{ML}_{g,n}$ of measured geodesic laminations on $S_{g,n}$ by the set
\begin{equation}
\label{eq:parameters}
\Theta := \left\lbrace(m_i,t_i) \in (\mathbf{R}_{\geq 0} \times \mathbf{R})^N \ | \ m_i = 0 \Rightarrow t_i \geq 0, \ \forall i=1,\dots, N \right\rbrace.
\end{equation}
In particular, it is possible to define the combinatorial length of any measured geodesic lamination $\lambda \in \mathcal{ML}_{g,n}$ using (\ref{eq:comb_length}); we will also denote such combinatorial length by $L_{\mathcal{P}}(X,\lambda)$. \\

Recall that for every $X \in \mathcal{T}_{g,n}$ the hyperbolic length function $\ell_\cdot(X) \colon \mathcal{ML}_g \to \mathbf{R}_{>0}$ is homogeneous with respect to positive scalings. As any parametrization of $\mathcal{ML}_{g,n}$ by Dehn-Thurston coordinates is homogeneous with respect to positive scalings, it follows directly from the definition (\ref{eq:comb_length}) that any combinatorial length function $L_{\mathcal{P}}(X,\cdot) \colon \mathcal{ML}_{g,n} \to \mathbf{R}_{>0}$ is also homogeneous with respect to positive scalings. In particular, Lemma \ref{length_comparison_lemma} also holds for weighted multi-curves. As weighted multi-curves are dense in $\mathcal{ML}_{g,n}$ and as both the hyperbolic and combinatorial length functions are continuous, we deduce the following corollary:\\

\begin{corollary}
	\label{length_comparison_lemma_ML}
	Fix $L > 0$. There is a constant $C > 0$ (depending on $L$) such that for every $X \in \mathcal{T}_{g,n}$ and every pair of pants decomposition $\mathcal{P}$ of $S_{g,n}$ which is $L$-bounded on $X$, there is a set of Dehn-Thurston coordinates $(m_i,t_i)_{i=1}^N$ of $\mathcal{ML}_{g,n}$ adapted to $\mathcal{P}$ such that for every measured geodesic lamination $\lambda \in \mathcal{ML}_{g,n}$ the following bounds hold:
	\[
	\frac{1}{C} \cdot L_{\mathcal{P}}(X,\lambda) \leq  \ell_\lambda(X) \leq C \cdot L_{\mathcal{P}}(X,\lambda).
	\]
\end{corollary}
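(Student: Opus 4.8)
The plan is to deduce the corollary from Lemma \ref{length_comparison_lemma} by combining the homogeneity of the two length functions with a density argument, exactly as sketched in the paragraph preceding the statement. Fix $L > 0$ and let $C > 0$ be the constant provided by Lemma \ref{length_comparison_lemma}. Given $X \in \mathcal{T}_{g,n}$ and a pair of pants decomposition $\mathcal{P}$ that is $L$-bounded on $X$, take the set of Dehn-Thurston coordinates $(m_i,t_i)_{i=1}^N$ of $\mathcal{ML}_{g,n}(\mathbf{Z})$ adapted to $\mathcal{P}$ furnished by that lemma, and extend it, via Theorem \ref{Dehn_Thurston_coordinates_ML}, to a parametrization of all of $\mathcal{ML}_{g,n}$ by the set $\Theta$ of (\ref{eq:parameters}). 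Formula (\ref{eq:comb_length}) then defines $L_{\mathcal{P}}(X,\cdot)$ on all of $\mathcal{ML}_{g,n}$, and the goal is to show the two-sided bound holds on this larger domain with the same constant $C$.

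First I would record two homogeneity facts. The hyperbolic length function $\ell_\cdot(X)$ on $\mathcal{ML}_{g,n}$ is the continuous affine extension of $\ell_\cdot(X)$ on $\mathcal{ML}_{g,n}(\mathbf{Z})$, hence $\ell_{t\lambda}(X) = t\,\ell_\lambda(X)$ for every $t > 0$. In Dehn-Thurston coordinates the scaling $\lambda \mapsto t\lambda$ acts by $(m_i,t_i) \mapsto (tm_i,tt_i)$, and the right-hand side of (\ref{eq:comb_length}) is positively homogeneous of degree one in $(m_i,t_i)_{i=1}^N$, so $L_{\mathcal{P}}(X,t\lambda) = t\,L_{\mathcal{P}}(X,\lambda)$ for every $t > 0$. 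Since Lemma \ref{length_comparison_lemma} gives the bound for every integral multi-curve, these two identities upgrade it, with the same constant $C$, to every positive rational multiple of an integral multi-curve, i.e., to every rational multi-curve (each such multi-curve is obtained from an integral one by clearing denominators).

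It then remains to pass to the limit. Rational multi-curves are dense in $\mathcal{ML}_{g,n}$, being the positive-rational points of its piecewise-integral-linear structure; the function $\ell_\cdot(X)$ is continuous by construction; and $L_{\mathcal{P}}(X,\cdot)$ is continuous on $\mathcal{ML}_{g,n}$ because the Dehn-Thurston parametrization is a homeomorphism onto $\Theta$ and the right-hand side of (\ref{eq:comb_length}) depends continuously on $(m_i,t_i)_{i=1}^N$. Given an arbitrary $\lambda \in \mathcal{ML}_{g,n}$, I would choose a sequence of rational multi-curves $\gamma_k \to \lambda$, apply the bound $\frac{1}{C}L_{\mathcal{P}}(X,\gamma_k) \leq \ell_{\gamma_k}(X) \leq C\,L_{\mathcal{P}}(X,\gamma_k)$, and let $k \to \infty$; non-strict inequalities survive the limit, which yields the claim for $\lambda$. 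There is no serious obstacle here: the only points needing a little care are the (immediate) degree-one homogeneity of (\ref{eq:comb_length}) and the continuity of $L_{\mathcal{P}}(X,\cdot)$ across the boundary locus $\{m_i = 0\}$ of $\Theta$, together with the observation that $C$ is inherited unchanged from Lemma \ref{length_comparison_lemma} and in particular does not depend on $\lambda$, so that the bound holds uniformly over $\mathcal{ML}_{g,n}$.
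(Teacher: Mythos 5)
Your argument is correct and is essentially the paper's own proof: the paper likewise upgrades Lemma \ref{length_comparison_lemma} from integral multi-curves to weighted multi-curves using the positive homogeneity of both $\ell_\cdot(X)$ and $L_{\mathcal{P}}(X,\cdot)$ in Dehn--Thurston coordinates, and then passes to all of $\mathcal{ML}_{g,n}$ by density and continuity, keeping the same constant $C$. Your extra remarks on the continuity of $L_{\mathcal{P}}(X,\cdot)$ across $\{m_i=0\}$ and on clearing denominators only make explicit what the paper leaves implicit.
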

$ $

We are now ready to prove Theorem \ref{theo:mir_new_bound}:\\

\begin{proof}[Proof of Theorem \ref{theo:mir_new_bound}]
	Let $0 < \epsilon < 1$ be small enough so that on any hyperbolic surface no two simple closed geodesics of length $\leq \epsilon$ intersect. Consider an arbitrary hyperbolic surface $X \in \mathcal{M}_{g,n}$. After choosing an arbitrary marking, we can consider $X$ as a point in $\mathcal{T}_{g,n}$. Let $\{\gamma_1,\dots,\gamma_k\}$ be the set of all simple closed curves on $S_{g,n}$ having length $\leq \epsilon$ on $X$. Notice that the choice of $\epsilon > 0$ forces these simple closed curves to be pairwise disjoint; in particular $0 \leq k \leq N$. By Theorem \ref{theo:bers}, we can complete the collection $\{\gamma_1,\dots,\gamma_k\}$ to a pair of pants decomposition 
	\[
	\mathcal{P}_X := \{\gamma_1,\dots,\gamma_k,\gamma_{k+1},\dots,\gamma_N\}
	\]
	of $S_{g,n}$ satisfying 
	\[
	\ell_{\gamma_i}(X) \leq L_{g,n}, \ \forall i=1,\dots,N,
	\] 
	where $L_{g,n} > 1$ is a constant depending only on $g$ and $n$. In other words, $\mathcal{P}_X$ is $L_{g,n}$-bounded on $X$.\\
	
	Consider the subsets $B_X,B_{X,\mathcal{P}_X} \subseteq \mathcal{ML}_{g,n}$ given by
	\begin{align*}
	B_X &:= \{\lambda \in \mathcal{ML}_{g,n} \ | \ \ell_X(\lambda) \leq 1 \},\\
	B_{X,\mathcal{P}_X} &:= \{\lambda \in \mathcal{ML}_{g,n} \ | \ L_{\mathcal{P}_X}(X,\lambda) \leq 1 \},
	\end{align*}
	where the set of Dehn-Thurston coordinates used to define $L_{\mathcal{P}_X}(X,\cdot)$ is the one given by Corollary \ref{length_comparison_lemma_ML}. It follows from Corollary \ref{length_comparison_lemma_ML} that
	\[
	B_X \subseteq C \cdot B_{X,\mathcal{P}_X}
	\]
	for some constant $C > 0$ depending only on $g$, $n$, and $L_{g,n}$. Using the scaling properties of the Thurston measure we deduce
	\[
	B(X) = \mu_{\text{Thu}}(B_X) \leq \mu_{\text{Thu}}(C \cdot B_{X,\mathcal{P}_X}) = C^{2N} \cdot \mu_{\text{Thu}}( B_{X,\mathcal{P}_X}).
	\]
	This reduces our problem to computing $\mu_{\text{Thu}}( B_{X,\mathcal{P}_X})$.\\
	
	We compute $\mu_{\text{Thu}}( B_{X,\mathcal{P}_X})$ explicitely using Dehn-Thurston coordinates. Recall that for any set of Dehn-Thurston coordinates $(m_i,t_i)_{i=1}^N$ of  $\mathcal{ML}_{g,n}$ with parameter space $\Theta$ as in (\ref{eq:parameters}), the Thuston measure $\mu_{\text{Thu}}$ on $\mathcal{ML}_{g,n}$ corresponds to $2^{g-N}$ times the standard Lebesgue measure on $\Theta$, which we denote by $\text{Leb}$. It follows that
	\[
	\mu_{\text{Thu}}( B_{X,\mathcal{P}_X}) = 2^{g-N} \cdot \text{Leb}(A_{X,\mathcal{P}_X}),
	\]
	where
	\[
	A_{X,\mathcal{P}_X} := \left\lbrace (m_i,t_i) \in \Theta \ \bigg| \ \sum_{i=1}^N (m_i \cdot w(\ell_{\gamma_i}(X)) + |t_i| \cdot \ell_{\gamma_i}(X)) \leq 1 \right\rbrace.
	\]
	A direct calculation shows that
	\[
	\text{Leb}(A_{X,\mathcal{P}_X}) =  \frac{2^N}{N!} \cdot \prod_{i=1}^N \frac{1}{\ell_{\gamma_i}(X) \cdot w(\ell_{\gamma_i}(X))}.
	\]
	Recall
	\[
	\lim_{x \to 0^+} \frac{w(x)}{|\log(x)|} = 1.
	\]
	As a consequence, for every sufficiently small $0 < \epsilon < 1$ and every $0 < x < \epsilon$,
	\[
	w(x) \geq \frac{1}{2} \cdot  |\log(x)|.
	\]
	In particular, for every $i \in \{1,\dots,k\}$ we can bound
	\[
	\frac{1}{\ell_{\gamma_i}(X) \cdot w(\ell_{\gamma_i}(X))} \leq 
	2 \cdot R(\ell_{\gamma_i}(X)).
	\]
	Consider the function $H \colon \mathbf{R}_{>0} \to \mathbf{R}_{>0}$ defined as
	\[
	H(x) := \frac{1}{x \cdot w(x)}.
	\]
	Given $\epsilon > 0$ sufficiently small, let $M > 0$ be the maximum attained by $H$ on the compact interval $[\epsilon, L_{g,n}]$. For every $i \in \{k+1,\dots,N\}$ we can bound
	\[
	\frac{1}{\ell_{\gamma_i}(X) \cdot w(\ell_{\gamma_i}(X))} \leq M.
	\]
	Putting everything together we deduce
	\[
	B(X) \leq \frac{C^{2N} \cdot 2^{g+k} \cdot M^{N-k}}{N!} \cdot \prod_{i=1}^k R(\ell_X(\gamma_i)),
	\]
	finishing the proof.
\end{proof}
$ $

\begin{remark}
	The proof of Theorem \ref{theo:mir_new_bound} shows that \textit{how sufficiently small} the values of $\epsilon > 0$ considered need to be is independent of $g$ and $n$.\\
\end{remark}

\textit{Square-integrability of the Mirzakhani function.} Fix $0 < \epsilon < 1$ small enough according to Theorem \ref{theo:mir_new_bound}. It follows from Theorem \ref{theo:mir_new_bound} that the integrability properties of the function $F \colon \mathcal{M}_{g,n} \to \mathbf{R}_{>0}$ given by
\begin{equation}
\label{eq:F}
F(X) := \prod_{\gamma \colon \ell_{\gamma}(X) \leq \epsilon} R(\ell_X(\gamma)),
\end{equation}
where the product ranges over all simple closed geodesics $\gamma$ in $X$ of length $\leq \epsilon$, are inherited by the Mirzakhani function. Motivated by this idea we prove the following result.\\ 

\begin{proposition}
	\label{prop:F_L2}
	The function $F \colon \mathcal{M}_{g,n} \to \mathbf{R}_{>0}$ defined in (\ref{eq:F}) is square integrable with respect to the Weil-Petersson volume form on $\mathcal{M}_{g,n}$, i.e.,
	\[
	\int_{\mathcal{M}_{g,n}} F(X)^2 \ d\widehat{\mu}_{\text{wp}}(X) < +\infty
	\]
\end{proposition}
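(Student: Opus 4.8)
The plan is to reduce the integral over $\mathcal{M}_{g,n}$ to a manageable integral over a neighborhood of the cusp, expressed in Fenchel–Nielsen coordinates. First I would invoke a standard thick-thin decomposition: for the fixed $\epsilon>0$, write $\mathcal{M}_{g,n} = \mathcal{M}_{g,n}^{\geq\epsilon} \cup \bigcup_{k} \mathcal{M}_{g,n}^{<\epsilon}(k)$, where the thick part $\mathcal{M}_{g,n}^{\geq\epsilon}$ consists of surfaces with systole $\geq \epsilon$, and the thin parts are indexed by which collection of $k$ pairwise-disjoint simple closed curves are short. On the thick part, $F \equiv 1$ (empty product) and the Weil–Petersson volume is finite, so that contribution is trivially finite. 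It therefore suffices to bound $\int F^2$ over each thin piece.

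Next I would fix a topological type of short multi-curve, say $\{\gamma_1,\dots,\gamma_k\}$ with $k \leq N$, and complete it to a pair of pants decomposition $\mathcal{P}$. Since the mapping class group orbit of such a configuration is what parametrizes the thin stratum, I can set up Fenchel–Nielsen coordinates $(\ell_i,\tau_i)_{i=1}^N$ adapted to $\mathcal{P}$ so that the relevant region of $\mathcal{M}_{g,n}$ is covered (up to the residual finite-index action of the stabilizer, which only affects things by a bounded factor) by $\{0 < \ell_i \leq \epsilon,\ 0 \leq \tau_i < \ell_i\}$ for $i \leq k$, and $\{\ell_i, \tau_i\}$ ranging over a region of bounded Weil–Petersson volume for $i > k$ (using the Bers constant, Theorem \ref{theo:bers}, to keep the remaining lengths bounded). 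By Wolpert's magic formula $v_{\text{wp}} = \prod_i d\ell_i \wedge d\tau_i$, and using that on this stratum $F(X) = \prod_{i=1}^k R(\ell_{\gamma_i}(X))$ up to a bounded multiplicative constant (the other factors $R(\ell_{\gamma_i}(X))$ for $i>k$ are bounded since $\epsilon \leq \ell_i \leq L_{g,n}$), the integral factorizes. The $\tau_i$-integral over $[0,\ell_i)$ contributes a factor $\ell_i$, and we are left with $\int_0^\epsilon R(\ell)^2 \cdot \ell \, d\ell = \int_0^\epsilon \frac{1}{\ell^2 (\log \ell)^2} \cdot \ell \, d\ell = \int_0^\epsilon \frac{d\ell}{\ell (\log \ell)^2}$ for each $i \leq k$.

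The key computation is then the elementary observation that $\int_0^\epsilon \frac{d\ell}{\ell\,(\log\ell)^2} = \left[ -\frac{1}{\log\ell}\right]_0^\epsilon = \frac{1}{|\log\epsilon|} < \infty$, since the substitution $u = \log\ell$ turns it into $\int_{-\infty}^{\log\epsilon} u^{-2}\,du$, which converges precisely because the exponent $2 > 1$. (This is exactly the point where the improved $R(x)$ bound from Theorem \ref{theo:mir_new_bound} is essential: the weaker bound $\prod 1/\ell_\gamma$ from Proposition \ref{mir_original_bound} would leave $\int_0^\epsilon \ell^{-1}\,d\ell$, which diverges — indeed even $B$ itself being merely $L^1$, as in Proposition \ref{prop:Borig}, corresponds to the borderline convergent integral $\int_0^\epsilon \ell^{-1}|\log\ell|^{-1}\,d\ell = \infty$... wait, that diverges too, so one should note that $L^1$-integrability of $B$ already uses the $\tau$-factor $\ell$ from Wolpert, giving $\int_0^\epsilon |\log\ell|^{-1}\,d\ell < \infty$, and squaring improves the decay enough that even without that factor one would be fine; with it, convergence is comfortable.) Multiplying the $k$ convergent one-dimensional integrals together with the bounded contribution from the $i>k$ coordinates and the bounded volume of the stabilizer quotient gives a finite bound for each thin stratum, and summing the finitely many strata types completes the proof.

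I expect the only genuine subtlety — not a hard obstacle but the step requiring care — is the bookkeeping in passing from $\mathcal{M}_{g,n}$ to the Fenchel–Nielsen chart: one must check that the cover of the thin stratum by such charts has bounded overlap (equivalently, that the stabilizer of the short multi-curve acts on the $(\ell_i,\tau_i)$ with $i>k$ coordinates with a fundamental domain of finite, uniformly bounded Weil–Petersson volume), so that integrating over the naive coordinate box only overcounts by a bounded factor. This is standard (it is implicit in Mirzakhani's proof of Proposition \ref{prop:Borig}), and once granted, everything reduces to the convergent integral above.
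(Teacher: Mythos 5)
Your proposal is correct and follows essentially the same route as the paper: stratify by the (finitely many types of) short simple closed geodesics, use Bers' theorem to cover each stratum by finitely many Fenchel--Nielsen boxes with the remaining cuff lengths bounded by $L_{g,n}$, apply Wolpert's magic formula so the integral factorizes, and reduce to the convergent one-dimensional integral $\int_0^\epsilon \ell^{-1}(\log\ell)^{-2}\,d\ell = 1/|\log\epsilon|$ after the twist integration contributes the factor $\ell$. One correction to your parenthetical aside: without that twist factor the squared integrand gives $\int_0^\epsilon \ell^{-2}(\log\ell)^{-2}\,d\ell$, which diverges, so the factor $\ell$ from integrating $\tau_i$ over $[0,\ell_i)$ is genuinely needed even after squaring; this does not affect your main argument, which correctly includes it.
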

$ $

\begin{proof}
	For every $k \in \{0,\dots, N\}$, let $\mathcal{M}_{g,n}^{k,\epsilon} \subseteq \mathcal{M}_{g,n}$ be the subset of all the hyperbolic surfaces in $\mathcal{M}_{g,n}$ with exactly $k$ simple closed geodesics of length $\leq \epsilon$ (by Mumford's compactness criterion, see  12.4 in \cite{FM11} for instance, $\mathcal{M}_{g,n}^{0,\epsilon}$ is compact). It is enough for our purposes to show that for every $k \in \{0,\dots, N\}$ the following integral is finite:
	\[
	\int_{\mathcal{M}_{g,n}^{k,\epsilon}} F(X)^2 \ d\widehat{\mu}_{\text{wp}}(X).
	\]
	$ $
	
	Fix $k \in \{0,\dots,N\}$. Let $L_{g,n} > 1$ be as in Theorem \ref{theo:bers}. As consequence of Theorem \ref{theo:bers} and of the fact that there are only finitely many pair of pants decompositions of $S_{g,n}$ up to the action of the mapping class group, we see that $\mathcal{M}_{g,n}^{k,\epsilon}$ can be covered by finitely many subsets of $\mathcal{T}_{g,n}$ which in appropriate Fenchel-Nielsen coordinates $(\ell_i,\tau_i)_{i=1}^N \in (\mathbf{R}_{>0} \times \mathbf{R})^N$ are given by
	\[
	A_{g,n}^{k,\epsilon}:= \left\lbrace
	\begin{array}{c | l}
	(\ell_i,\tau_i)_{i=1}^N \in (\mathbf{R}_{> 0} \times \mathbf{R})^N
	& \ 0 \leq \tau_i < \ell_i, \ \forall i=1,\dots,N,\\
	& \ 0 <\ell_i \leq \epsilon,  \ \forall i=1,\dots,k,\\
	& \ \epsilon < \ell_i \leq L_{g,n},  \ \forall i=k+1,\dots,N.\\
	\end{array} \right\rbrace,
	\]
	Let $\mathcal{A}_{g,n}^{k,\epsilon} \subseteq \mathcal{T}_{g,n}$ be one of these subsets. It is enough for our purposes to show that 
	\[
	\int_{\mathcal{A}_{g,n}^{k,\epsilon}} \widetilde{F}(X)^2 \ d\mu_{\text{wp}}(X) < +\infty,
	\]
	where $\widetilde{F} \colon \mathcal{T}_{g,n} \to \mathbf{R}_{>0}$ denotes the lift of $F$ to $\mathcal{T}_{g,n}$.\\
	
	Using Wolpert's magic formula we compute
	\begin{align*}
	\int_{\mathcal{A}_{g,n}^{k,\epsilon}} \widetilde{F}(X)^2 \ d\mu_{\text{wp}}(X) &= \int_{A_{g,n}^{k,\epsilon}} \prod_{i=1}^k \frac{1}{\ell_i^2 \cdot\log(\ell_i)^2} \ d\tau_1 \cdots d\tau_N \ d\ell_1 \cdots d\ell_N\\
	&= \left(\prod_{i=1}^k \int_0^\epsilon \int_0^{\ell_i} \frac{1}{\ell_i^2 \cdot \log(\ell_i)^2} \ d\tau_i \ d\ell_i\right) \\
	&\quad \cdot \left(\prod_{i=k+1}^{N} \int_\epsilon^{L_{g,n}} \int_\epsilon^{\ell_i}  \ d\tau_i \ d\ell_i \right)
	\end{align*}
	Direct computations show
	\begin{align*}
	\int_0^\epsilon \int_0^{\ell_i} \frac{1}{\ell_i^2 \cdot \log(\ell_i)^2} \ d\tau_i \ d\ell_i &=  \frac{-1}{\log(\epsilon)} < +\infty, \\
	\int_\epsilon^{L_{g,n}} \int_0^{\ell_i}  \ d\tau_i \ d\ell_i &= \frac{L_{g,n}^2 - \epsilon^2}{2} < +\infty.
	\end{align*}
	It follows that
	\[
	\int_{\mathcal{A}_{g,n}^{k,\epsilon}} \widetilde{F}(X)^2 \ d\mu_{\text{wp}}(X) < +\infty,
	\]
	completing the proof.
\end{proof}
$ $

As a direct consequence of Theorem \ref{theo:mir_new_bound} and Proposition \ref{prop:F_L2} we deduce:\\

\begin{theorem}
	\label{theo:mir_L2}
	The Mirzakhani function $B \colon \mathcal{M}_{g,n} \to \mathbf{R}_{>0}$ is square integrable with respect to the Weil-Petersson volume form on $\mathcal{M}_{g,n}$, i.e.,
	\[
	a_{g,n} := \int_{\mathcal{M}_{g,n}} B(X)^2 \ d\widehat{\mu}_{\text{wp}}(X) < +\infty
	\]
\end{theorem}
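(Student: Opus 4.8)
The plan is to deduce square-integrability of $B$ directly from the pointwise upper bound of Theorem \ref{theo:mir_new_bound} together with the $L^2$ estimate for the model function $F$ in Proposition \ref{prop:F_L2}. Concretely, I would first fix $\epsilon \in (0,1)$ small enough that both of those results apply (such an $\epsilon$ exists, and by the remark following the proof of Theorem \ref{theo:mir_new_bound} the threshold can even be taken independent of $g$ and $n$). Then, observing that $B(X) = \mu_{\text{Thu}}(B_X)$ is non-negative, Theorem \ref{theo:mir_new_bound} supplies a constant $C > 0$ with $0 \leq B(X) \leq C \cdot F(X)$ for every $X \in \mathcal{M}_{g,n}$, so that $B(X)^2 \leq C^2 F(X)^2$ everywhere on $\mathcal{M}_{g,n}$.

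Integrating this pointwise inequality against the Weil--Petersson measure and applying Proposition \ref{prop:F_L2} then gives
\[
a_{g,n} = \int_{\mathcal{M}_{g,n}} B(X)^2 \, d\widehat{\mu}_{\text{wp}}(X) \;\leq\; C^2 \int_{\mathcal{M}_{g,n}} F(X)^2 \, d\widehat{\mu}_{\text{wp}}(X) \;<\; +\infty,
\]
which is exactly the assertion. No further analysis is needed at this stage: the genuinely substantive input has already been isolated in the two preceding results.

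It is worth flagging where the difficulty really sits, since the argument above is otherwise a one-line domination. The weaker upper bound $B(X) \leq C \prod_{\gamma\colon \ell_\gamma(X)\leq\epsilon} \ell_\gamma(X)^{-1}$ of Proposition \ref{mir_original_bound} is insufficient here: squaring it and integrating in a Fenchel--Nielsen chart produces, for each short curve, the divergent one-dimensional integral $\int_0^\epsilon\!\int_0^{\ell} \ell^{-2}\, d\tau\, d\ell = \int_0^\epsilon \ell^{-1}\, d\ell = +\infty$. The improvement of Theorem \ref{theo:mir_new_bound}, which replaces each factor $\ell_\gamma(X)^{-1}$ by $R(\ell_\gamma(X)) = (\ell_\gamma(X)\,|\log \ell_\gamma(X)|)^{-1}$, is precisely calibrated so that the corresponding integral becomes $\int_0^\epsilon \ell^{-1}\log(\ell)^{-2}\, d\ell = -1/\log\epsilon < +\infty$; this is the computation carried out in the proof of Proposition \ref{prop:F_L2}. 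Thus the real obstacle --- obtaining the sharper near-cusp bound --- was already overcome earlier, and the present theorem is its immediate corollary.
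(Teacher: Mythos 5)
Your argument is correct and is exactly the paper's: Theorem \ref{theo:mir_new_bound} gives the pointwise domination $B(X) \leq C \cdot F(X)$, so $B^2 \leq C^2 F^2$, and Proposition \ref{prop:F_L2} then yields finiteness of $a_{g,n}$. Your closing remarks correctly identify where the real work lies, but add nothing beyond what the paper already establishes in those two results.
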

$ $

\begin{remark}
	\label{rem:B_notin_L3}
	Using the lower bound in Proposition \ref{mir_original_bound} and computations similar to the ones in the proof of Proposition \ref{prop:F_L2}, one can show $B \notin L^{2+\epsilon}(\mathcal{M}_{g,n}, \widehat{\mu}_{\text{wp}})$ for every $\epsilon > 0$. \\
\end{remark}

\section{Statistics of counting problems for simple closed geodesics}

$ $

\textit{Joint frequencies.} Let $\gamma_1,\gamma_2 \in \mathcal{ML}_{g,n}(\mathbf{Z})$ be a pair of integral multi-curves on $S_{g,n}$. Recall the definition of their \textit{joint frequency} $c(\gamma_1,\gamma_2)$ given in (\ref{eq:joint_freq}):
\begin{equation}
\label{eq:joint_freq_2}
c(\gamma_1,\gamma_2) := \lim_{L \to \infty} \frac{1}{L^{12g-12+4n}} \int_{\mathcal{M}_{g,n}} s(X,\gamma_1,L) \cdot s(X,\gamma_2,L) \ d\widehat{\mu}_{\text{wp}}(X).
\end{equation}
$ $

We now prove Theorem \ref{theo:main_res_4}, which we restate here for convenience:\\

\begin{theorem}
	\label{theo:joint_freq_rel}
	For every pair of integral multi-curves $\gamma_1,\gamma_2 \in \mathcal{ML}_{g,n}(\mathbf{Z})$, the limit in the definition (\ref{eq:joint_freq_2}) of $c(\gamma_1,\gamma_2)$ exists and moreover,
	\[
	c(\gamma_1,\gamma_2) = \frac{a_{g,n}}{b_{g,n}^2} \cdot c(\gamma_1) \cdot c(\gamma_2).
	\]
\end{theorem}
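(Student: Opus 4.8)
The plan is to use Mirzakhani's equidistribution philosophy together with the newly established square-integrability of $B$. The starting point is Mirzakhani's count: by Theorem \ref{theo:mir_count} and Theorem \ref{theo:lead}, for each fixed $X \in \mathcal{M}_{g,n}$ one has $s(X,\gamma_i,L)/L^{6g-6+2n} \to c(\gamma_i) B(X)/b_{g,n}$ as $L \to \infty$, hence pointwise on $\mathcal{M}_{g,n}$ the integrand $\frac{1}{L^{12g-12+4n}} s(X,\gamma_1,L) s(X,\gamma_2,L)$ converges to $\frac{c(\gamma_1)c(\gamma_2)}{b_{g,n}^2} B(X)^2$. So the heart of the matter is to justify passing the limit inside the integral $\int_{\mathcal{M}_{g,n}} (\cdots)\, d\widehat{\mu}_{\text{wp}}$; once that is done, the value of the limit is $\frac{c(\gamma_1)c(\gamma_2)}{b_{g,n}^2}\int_{\mathcal{M}_{g,n}} B(X)^2\, d\widehat{\mu}_{\text{wp}} = \frac{a_{g,n}}{b_{g,n}^2}\, c(\gamma_1) c(\gamma_2)$, which is exactly the claimed formula.

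First I would reduce to the case of integral (indeed reduced) multi-curves and invoke Proposition \ref{prop:main_res_3}: for all sufficiently small $\epsilon > 0$ there are $C > 0$, $L_0 > 0$ such that for all $L \geq L_0$, all $X$, and all integral multi-curves $\eta$,
\[
\frac{s(X,\eta,L)}{L^{6g-6+2n}} \leq C \cdot \prod_{\gamma \colon \ell_\gamma(X) \leq \epsilon} R(\ell_\gamma(X)) = C \cdot F(X),
\]
with $F$ as in \eqref{eq:F}. Applying this to $\eta = \gamma_1$ and $\eta = \gamma_2$ gives, for all $L \geq L_0$,
\[
0 \leq \frac{1}{L^{12g-12+4n}}\, s(X,\gamma_1,L)\, s(X,\gamma_2,L) \leq C^2 \cdot F(X)^2 .
\]
By Proposition \ref{prop:F_L2}, $F^2$ is integrable on $\mathcal{M}_{g,n}$ with respect to $\widehat{\mu}_{\text{wp}}$, so the integrands are dominated by a fixed $L^1$ function uniformly in $L \geq L_0$. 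The dominated convergence theorem then applies to the family indexed by $L$ (taking any sequence $L_j \to \infty$), yielding
\[
\lim_{L \to \infty} \frac{1}{L^{12g-12+4n}} \int_{\mathcal{M}_{g,n}} s(X,\gamma_1,L)\, s(X,\gamma_2,L)\, d\widehat{\mu}_{\text{wp}}(X) = \frac{c(\gamma_1)c(\gamma_2)}{b_{g,n}^2} \int_{\mathcal{M}_{g,n}} B(X)^2\, d\widehat{\mu}_{\text{wp}}(X),
\]
which is $\frac{a_{g,n}}{b_{g,n}^2} c(\gamma_1) c(\gamma_2)$ by the definition of $a_{g,n}$ in Theorem \ref{theo:mir_L2}. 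This establishes both the existence of the limit defining $c(\gamma_1,\gamma_2)$ and the stated identity.

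The main obstacle is obtaining the uniform-in-$L$ domination, i.e.\ Proposition \ref{prop:main_res_3}; this is the input that does the real work, since the naive pointwise bound $s(X,\gamma_i,L)/L^{6g-6+2n} \to c(\gamma_i)B(X)/b_{g,n}$ is not uniform over $X$ near the cusp of $\mathcal{M}_{g,n}$, and without a cusp-uniform bound of the right order one cannot interchange limit and integral. Granting Proposition \ref{prop:main_res_3} (whose proof mirrors the upper bound argument of Theorem \ref{theo:mir_new_bound}, comparing hyperbolic length with combinatorial length via the collar lemma and Dehn--Thurston coordinates, and using that $s(X,\eta,L)$ counts lattice points in a ball whose Thurston volume is controlled by $F(X)$), everything else is the routine dominated-convergence argument sketched above. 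One should also remark that the bound extends from integral to weighted rational multi-curves and then, by density and homogeneity, controls the counting uniformly; but for the statement as given, integral multi-curves suffice.
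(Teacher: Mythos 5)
Your proposal is correct and is essentially the paper's own argument: pointwise convergence of $s(X,\gamma_i,L)/L^{6g-6+2n}$ to $c(\gamma_i)B(X)/b_{g,n}$ via Theorems \ref{theo:mir_count} and \ref{theo:lead}, uniform domination of the product by $C\cdot F(X)^2$ via Proposition \ref{prop:main_res_3}, integrability of $F^2$ from Proposition \ref{prop:F_L2}, and then the dominated convergence theorem. No gaps to report; the side remarks about reduced or weighted multi-curves are unnecessary but harmless.
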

$ $

To prove Theorem \ref{theo:joint_freq_rel} we make use of the following upper bound, similar in spirit to the one in Theorem \ref{theo:mir_new_bound}.\\

\begin{proposition}
	\label{prop:count_L2_bound}
	For all sufficiently small $\epsilon >0$, there exist constants $C > 0$ and $L_0 > 0$ such that for all $L \geq L_0$, all $X \in \mathcal{M}_{g,n}$, and all $\eta \in \mathcal{ML}_{g,n}(\mathbf{Z})$, 
	\[
	\frac{s(X,\eta,L)}{L^{6g-6+2n}} \leq C \cdot \prod_{\gamma \colon \ell_{\gamma}(X) \leq \epsilon} R(\ell_X(\gamma)),
	\]
	where the product ranges over all simple closed geodesics $\gamma$ in $X$ of length $\leq \epsilon$.
\end{proposition}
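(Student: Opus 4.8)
\textbf{Proof proposal for Proposition \ref{prop:count_L2_bound}.}

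The plan is to mimic the proof of Theorem \ref{theo:mir_new_bound}, replacing the Thurston measure of the ball $B_X$ by the counting function $s(X,\eta,L)$, and using the fact that, up to a multiplicative constant depending only on $g$ and $n$, the counting function is dominated by the Thurston measure of a ball of radius $L$. First I would fix $0 < \epsilon < 1$ small enough so that on any hyperbolic surface no two simple closed geodesics of length $\leq \epsilon$ intersect, and then, given $X \in \mathcal{M}_{g,n}$, choose a marking, let $\{\gamma_1,\dots,\gamma_k\}$ be the simple closed curves of length $\leq \epsilon$ on $X$, and complete this to an $L_{g,n}$-bounded pair of pants decomposition $\mathcal{P}_X$ via Theorem \ref{theo:bers}. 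Corollary \ref{length_comparison_lemma_ML} then gives Dehn-Thurston coordinates $(m_i,t_i)_{i=1}^N$ adapted to $\mathcal{P}_X$ and a constant $C$ depending only on $g,n,L_{g,n}$ with $\frac1C L_{\mathcal{P}_X}(X,\lambda) \leq \ell_\lambda(X) \leq C \, L_{\mathcal{P}_X}(X,\lambda)$ for all $\lambda \in \mathcal{ML}_{g,n}$. Restricting to integral multi-curves $\gamma$ in the mapping class group orbit of $\eta$, this shows that every $\gamma$ with $\ell_\gamma(X) \leq L$ satisfies $L_{\mathcal{P}_X}(X,\gamma) \leq CL$, so that $s(X,\eta,L)$ is bounded above by the number of integer points of $\mathcal{ML}_{g,n}(\mathbf{Z})$ (equivalently, of the parameter semigroup $\Lambda$) lying in the combinatorial ball $\{L_{\mathcal{P}_X}(X,\cdot) \leq CL\}$.

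Next I would count those integer points. The combinatorial ball in Dehn-Thurston coordinates is the polytope $\{(m_i,t_i) : \sum_i (m_i w(\ell_{\gamma_i}(X)) + |t_i|\ell_{\gamma_i}(X)) \leq CL\}$, and the number of lattice points in it is, for $L$ large, comparable to its Lebesgue volume plus lower-order boundary terms; a clean way to make this uniform in $X$ is to observe that each integer point can be assigned a unit cube contained in a fixed enlargement of the polytope (say the polytope dilated by a factor $2$ and translated), so that the integer count is at most the Lebesgue volume of that enlargement. That volume is, exactly as in the proof of Theorem \ref{theo:mir_new_bound}, a constant times $L^N$ times $\prod_{i=1}^N \frac{1}{\ell_{\gamma_i}(X)\, w(\ell_{\gamma_i}(X))}$. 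Splitting the product into the short indices $i \leq k$, where $w(\ell_{\gamma_i}(X)) \geq \frac12 |\log \ell_{\gamma_i}(X)|$ gives $\frac{1}{\ell_{\gamma_i}(X) w(\ell_{\gamma_i}(X))} \leq 2 R(\ell_{\gamma_i}(X))$, and the indices $k+1 \leq i \leq N$, where $\ell_{\gamma_i}(X) \in [\epsilon, L_{g,n}]$ so that $\frac{1}{\ell w(\ell)}$ is bounded by a constant $M = M(\epsilon,g,n)$, yields $s(X,\eta,L) \leq C' L^N \prod_{i=1}^k R(\ell_{\gamma_i}(X))$ for $L \geq L_0$, with $C'$ and $L_0$ depending only on $g$, $n$, and $\epsilon$ and in particular independent of $\eta$. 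Recalling $N = 3g-3+n$ so $L^N = L^{6g-6+2n}$ after we rewrite $2N$ versus $N$ bookkeeping carefully (the exponent in the statement is $6g-6+2n = 2N$, so one must track whether the coordinate count contributes $L^N$ or $L^{2N}$ — here the ball is in $\mathbf{R}^{2N}$ but constrained, giving genuinely $L^{2N}$; I will confirm this in the volume computation), dividing by $L^{6g-6+2n}$ gives the claim.

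The main obstacle, and the point requiring the most care, is making the lattice-point count \emph{uniform in both $X$ and $\eta$}: the comparison constant $C$ from Corollary \ref{length_comparison_lemma_ML} depends only on the Bers bound $L_{g,n}$ and hence is uniform, but one must be sure that the passage from "number of lattice points in a polytope" to "volume of a slightly enlarged polytope" does not hide an $X$-dependent constant — this is why I favor the explicit unit-cube packing argument, which works for any polytope of the given shape regardless of how degenerate the coefficients $w(\ell_{\gamma_i}(X))$ become as $\ell_{\gamma_i}(X) \to 0$. A secondary subtlety is that $s(X,\eta,L)$ counts curves in a single mapping class group orbit, so the bound via all integer points in the combinatorial ball is only an over-count, which is fine for an upper bound; one should also note that the Dehn-Thurston parameter space $\Lambda$ has index $2^{2g-3+n}$ in $(\mathbf{Z}_{\geq 0}\times\mathbf{Z})^N$, but since we only need an upper bound we may ignore the sublattice structure and count all of $(\mathbf{Z}_{\geq 0}\times\mathbf{Z})^N \cap \{L_{\mathcal{P}_X}(X,\cdot)\leq CL\}$.
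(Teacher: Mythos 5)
There is a genuine gap, and it sits exactly at the step you dismiss as a convenience: bounding the lattice count by the volume of a bounded enlargement of the polytope, and discarding the restriction to the single orbit $\text{Mod}_{g,n}\cdot\eta$. The set-up of your argument (Bers decomposition via Theorem \ref{theo:bers}, comparison of $\ell_\cdot(X)$ with $L_{\mathcal{P}_X}(X,\cdot)$, reduction to counting parameters with $L_{\mathcal{P}_X}(X,\cdot)\leq CL$) is the same as the paper's, but the counting step fails in the regime $\ell_{\gamma_i}(X)\ll CL\ll w(\ell_{\gamma_i}(X))$, which cannot be excluded because $L_0$ must be independent of $X$. In the $(m_i,t_i)$-slab for such an index the polytope is a thin rectangle: its area is $\asymp \frac{(CL)^2}{\ell_{\gamma_i}(X)\,w(\ell_{\gamma_i}(X))}$, while it contains about $\frac{2CL}{\ell_{\gamma_i}(X)}$ lattice points (all with $m_i=0$), i.e.\ more by the unbounded factor $\frac{w(\ell_{\gamma_i}(X))}{CL}$. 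Your unit-cube packing does not repair this: the cube based at $(0,t_i)$ reaches $m_i=1$, where the combinatorial length is at least $w(\ell_{\gamma_i}(X))>CL$, so the cube is not contained in the polytope dilated by $2$ (or by any fixed factor). Moreover, even the exact lattice count $\frac{2CL}{\ell_{\gamma_i}(X)}$ is not $\leq C'L^2 R(\ell_{\gamma_i}(X))$ uniformly, since the ratio is $\asymp \frac{|\log \ell_{\gamma_i}(X)|}{L}$, which is unbounded for fixed $L$ as $\ell_{\gamma_i}(X)\to 0$. This is why ignoring the orbit restriction is fatal rather than a ``secondary subtlety'': counting all of $\Lambda\cap\{L_{\mathcal{P}_X}(X,\cdot)\leq CL\}$ essentially bounds $b(X,CL)$, and Remark \ref{rem:b(X,L)_no_bound} records that no bound of the desired form can hold for $b(X,L)$ — indeed $b(\cdot,L)\notin L^2(\mathcal{M}_{g,n},\widehat{\mu}_{\text{wp}})$ precisely because of this thin-rectangle phenomenon.

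The missing idea, which is the heart of the paper's proof, is to use the fixed topological type of $\eta$ to kill the thin directions. If $w(\ell_{\gamma_i}(X))>CL$, then every multi-curve counted by $s(X,\eta,L)$ is disjoint from $\gamma_i$, so $m_i=0$; a parameter $(0,t_i)$ with $t_i>0$ means the multi-curve contains $\gamma_i$ as a component with weight $t_i$, and since curves in $\text{Mod}_{g,n}\cdot\eta$ have at most $3g-3+n$ components with prescribed weights, $t_i$ takes at most $3g-2+n$ values. One therefore splits the indices according to whether $w(\ell_{\gamma_i}(X))\leq CL$ or $>CL$, bounds the contribution of the latter by the constant $(3g-2+n)^{N-t}$, applies the volume (box) estimate only to the truncated count in the remaining coordinates — where $\frac{CL}{w(\ell_{\gamma_i}(X))}+1\leq \frac{2CL}{w(\ell_{\gamma_i}(X))}$ and $\frac{CL}{\ell_{\gamma_i}(X)}+1\leq\frac{2CL}{\ell_{\gamma_i}(X)}$ are legitimate — and then reinserts the discarded indices using $\frac{1}{\ell_{\gamma_i}(X)\,w(\ell_{\gamma_i}(X))}\geq 1$ for sufficiently short curves, valid once $L\geq\max\{L_0,L_1,1\}$. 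Your final step (splitting into short indices with $w\geq\frac12|\log|$ and the bounded indices in $[\epsilon,L_{g,n}]$) is fine and matches the paper, and your flagged $L^N$ versus $L^{2N}$ bookkeeping resolves correctly to $L^{2N}=L^{6g-6+2n}$; but without the orbit-based truncation the uniform bound you claim is simply false.
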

$ $

\begin{proof}
	We proceed as in the proof of Theorem \ref{theo:mir_new_bound}. 	Let $0 < \epsilon < 1$ be small enough so that on any hyperbolic surface no two simple closed geodesics of length $\leq \epsilon$ intersect. Consider an arbitrary hyperbolic surface $X \in \mathcal{M}_{g,n}$. After choosing an arbitrary marking, we can consider $X$ as a point in $\mathcal{T}_{g,n}$. Let $\{\gamma_1,\dots,\gamma_k\}$ be the set of all simple closed curves on $S_{g,n}$ having length $\leq \epsilon$ on $X$. Notice that the choice of $\epsilon > 0$ forces these simple closed curves to be pairwise disjoint; in particular $0 \leq k \leq N$. By Theorem \ref{theo:bers}, we can complete the collection $\{\gamma_1,\dots,\gamma_k\}$ to a pair of pants decomposition 
	\[
	\mathcal{P}_X := \{\gamma_1,\dots,\gamma_k,\gamma_{k+1},\dots,\gamma_N\}
	\]
	of $S_{g,n}$ satisfying 
	\[
	\ell_{\gamma_i}(X) \leq L_{g,n}, \ \forall i=1,\dots,N,
	\] 
	where $L_{g,n} > 1$ is a constant depending only on $g$ and $n$. In other words, $\mathcal{P}_X$ is $L_{g,n}$-bounded on $X$.\\

	Fix $\eta \in \mathcal{ML}_{g,n}(\mathbf{Z})$. For every $L > 0$ we consider the counting functions
	\begin{align*}
	s(X,\eta,L) &:= \# \{\alpha \in \text{Mod}_{g,n} \cdot \eta \ | \ \ell_X(\alpha) \leq L \},\\
	S(X,\eta,L) &:= \# \{\alpha \in \text{Mod}_{g,n} \cdot \eta \ | \ L_{\mathcal{P}_X}(X,\alpha) \leq L \},
	\end{align*}
	where the set of Dehn-Thurston coordinates used to define $L_{\mathcal{P}_X}(X,\cdot)$ is the one given by Lemma \ref{length_comparison_lemma}. It follows from Lemma \ref{length_comparison_lemma} that for every $L > 0$,
	\[
	s(X,\eta,L) \leq S(X,\eta,CL),
	\]
	where $C > 0$ is a constant depending only on $g$, $n$, and $L_{g,n}$. This reduces our problem to giving appropriate upper bounds for the values of $S(X,\eta,CL)$ across all $L \geq L_0$, with $L_0 > 0$ depending only on $g$, $n$, and $L_{g,n}$.\\
	
	We bound the values of $S(X,\eta,CL)$ by using Dehn-Thurston coordinates. Let $(m_i,t_i)_{i=1}^N$ be the set of Dehn-Thurston coordiantes of $\mathcal{ML}_{g,n}(\mathbf{Z})$ used to define the combinatorial length $L_{\mathcal{P}_X}(X,\cdot)$ above and let $\Lambda \subseteq (\mathbf{Z}_{\geq 0} \times \mathbf{Z})^N$ be its parameter space. We denote by $\Lambda_{\eta} \subseteq \Lambda$ the set of all parameters in $\Lambda$ that represent integral multi-curves in $\text{Mod}_{g,n} \cdot \eta$. Notice that for every $L > 0$,
	\[
	S(X,\eta,CL) = \# \left\lbrace (m_i,t_i) \in \Lambda_{\eta} \ \bigg| \ \sum_{i=1}^N (m_i \cdot w(\ell_{\gamma_i}(X)) + |t_i| \cdot \ell_{\gamma_i}(X)) \leq CL \right\rbrace.
	\]
	One can bound $S(X,\eta,CL)$ by the standard Lebesgue measure of the box
	\[
	B_{CL}^N := \left\lbrace
	\begin{array}{c | l}
	(x_i,y_i)_{i=1}^N\in (\mathbf{R}_{\geq 0} \times \mathbf{R})^N
	& \ 0 \leq x_i < \frac{CL}{w(\ell_{\gamma_i}(X))} + 1, \ \forall i=1,\dots,N,\\[5pt]
	& \ 0 \leq |y_i| \leq \frac{CL}{\ell_{\gamma_i}(X)} + 1,  \ \forall i=1,\dots,N.\\
	\end{array} \right\rbrace,
	\]
	but this does not give an upper bound of the desired order when $\ell_{\gamma_i}(X) \ll CL \ll  w(\ell_{\gamma_i}(X))$ for some $i \in \{1,\dots,N\}$.  Roughly speaking, in the regime $0 < a \ll 1 \ll b$, the area of the thin rectangle $R \subseteq \mathbf{R}^2$ with vertices $(0,b)$, $(0,-b)$, $(a,b)$, and $(a,-b)$ is not a good approximation for the number of integer points in $R$. There is a simple way to get around this difficulty though.\\
	
	We make the following key observation: given $L > 0$, if $w(\ell_{\gamma_i}(X))  > CL$ for some $i \in \{1,\dots,N\}$, then none of the integral multi-curves counted by the function $S(X,\eta,CL)$ intersect $\gamma_i$; in terms of Dehn-Thurston coordinates, $m_i = 0$ for all such integral multi-curves. Points in $\Lambda$ with $i$-th coordinates of the form $(0,t_i)$ and $t_i > 0$ represent integral multi-curves on $S_{g,n}$ one of whose topological components is $\gamma_i$, with weight $t_i$. As $\gamma$ has at most $3g-3+n$ topological components, there are at most $3g-2+n$ distinct possible values such $t_i$ can take when describing curves in the mapping class group orbit of $\eta$ ($t_i = 0$ is allowed).\\
	
	Given $L > 0$, relabel the $\gamma_i$'s so that $w(\ell_{\gamma_i}(X))  \leq CL$ for all $i \in \{1,\dots,t\}$ and $w(\ell_{\gamma_i}(X)) >  CL$ for all $i \in \{t+1,\dots,N\}$; the index $t \in \{0,\dots,N\}$ depends on $L$. Clearly $t = N$ for all big enough $L$, but how big $L$ needs to be for such condition to hold depends on $X$. As we are looking for an upper bound uniform across all big enough values of $L$, it is important to keep track of the index $t$. In this context we consider the truncated counting function
	\[
	B_t(X,CL) := \# \left\lbrace (m_i,t_i) \in (\mathbf{Z}_{\geq 0} \times \mathbf{Z})^t \ \bigg| \ \sum_{i=1}^t (m_i \cdot w(\ell_{\gamma_i}(X)) + |t_i| \cdot \ell_{\gamma_i}(X)) \leq CL \right\rbrace.
	\]	
	It follows from the key observation above that
	\[
	S(X,\eta,CL) \leq (3g-2+n)^{N-t} \cdot B_t(X,CL).
	\]
	$ $
	
	Let $L_0 :=  L_{g,n}/C$ so that $\ell_{\gamma_i}(X) \leq CL$ for all $i \in \{1,\dots,N\}$ and all $L \geq L_0$. Fix $L \geq L_0$ and let $t \in \{0,\dots,N\}$ be as in the previous paragraph. The conditions 
	\begin{equation}
	\label{eq:cond}
	w(\ell_{\gamma_i}(X)) \leq CL, \quad \ell_{\gamma_i}(X) \leq CL, \quad \forall i = 1,\dots,t
	\end{equation}
	will allow us to get an upper bound of the desired order for $B_t(X,CL)$. Notice
	\[
	B_t(X,CL) \leq \text{Leb}(B_{CL}^t),
	\]
	where $\text{Leb}(B_{CL}^t)$ denotes the standard Lebesgue measure of the box
	\[
	B_{CL}^t := \left\lbrace
	\begin{array}{c | l}
	(x_i,y_i)_{i=1}^t\in (\mathbf{R}_{\geq 0} \times \mathbf{R})^t
	& \ 0 \leq x_i < \frac{CL}{w(\ell_{\gamma_i}(X))} + 1, \ \forall i=1,\dots,t,\\[5pt]
	& \ 0 \leq |y_i| \leq \frac{CL}{\ell_{\gamma_i}(X)} + 1,  \ \forall i=1,\dots,t.\\
	\end{array} \right\rbrace.
	\]
	A direct calculation together with the conditions in (\ref{eq:cond}) give
	\[
	\text{Leb}(B_{CL}^t) = \prod_{i=1}^t 2 \cdot \left( \frac{CL}{w(\ell_{\gamma_i}(X))} + 1\right) \cdot \left( \frac{CL}{\ell_{\gamma_i}(X)} + 1\right)\leq \prod_{i=1}^t \frac{8 \cdot C^2 \cdot L^2}{\ell_{\gamma_i}(X) \cdot w(\ell_{\gamma_i}(X))}.
	\]
	Putting things together we deduce
	\begin{equation}
	\label{eq:partial_s_bound}
	s(X,\eta,L)  \leq (3g-2+n)^{N-t} \cdot 8^t \cdot C^{2t} \cdot L^{2t}  \cdot \prod_{i=1}^t \frac{1}{\ell_{\gamma_i}(X) \cdot w(\ell_{\gamma_i}(X))}.
	\end{equation}
	$ $
	
	Notice that
	\[
	\lim_{x \to 0+} \frac{1}{x \cdot w(x)} = +\infty.
	\]
	As a consequence, we can find $\delta > 0$ such that for all $0 < x < \delta$,
	\[
	\frac{1}{x \cdot w(x)} \geq 1.
	\]
	Notice also that $w \colon \mathbf{R}_{>0} \to \mathbf{R}_{>0}$ is an orientation reversing homeomorphism. Therefore we can find $L_1 > 0$ such that for all $L \geq L_1$, if $w(\ell_{\gamma_i}(X)) > CL$ for some $i \in \{1,\dots,N\}$, then $\ell_{\gamma_i}(X) < \delta$. In particular, given $L \geq L_1$ and $t \in \{0,\dots,N\}$ as above, every $i \in \{t+1,\dots,N\}$ satisfies $\ell_{\gamma_i}(X) < \delta$, and so we can bound
	\[
	1 \leq \frac{1}{\ell_{\gamma_i}(X) \cdot w(\ell_{\gamma_i}(X))}.
	\]
	It follows from (\ref{eq:partial_s_bound}) that under the condition $L \geq \max\{L_0,L_1,1\}$ we have
	\[
	s(X,\eta,L)  \leq (3g-2+n)^{N} \cdot 8^N \cdot C^{2N} \cdot L^{2N}  \cdot \prod_{i=1}^N \frac{1}{\ell_{\gamma_i}(X) \cdot w(\ell_{\gamma_i}(X))},
	\]
	where we assume without loss of generality that $C > 1$.\\
	$ $
	
	Proceeding just as in the last part of the proof of Theorem \ref{theo:mir_new_bound}, one can get an upper bound for $s(X,\eta,L)$ depending only on the simple closed curves $\gamma_i$ with $i \in \{1,\dots,k\}$, finishing the proof.
\end{proof}
$ $

\begin{remark}
	\label{rem:b(X,L)_no_bound}
	For every $X \in \mathcal{M}_{g,n}$ and every $L > 0$ consider the counting function
	\[
	b(X,L):= \# \{\alpha \in \mathcal{ML}_{g,n}(\mathbf{Z}) \ | \ \ell_X(\alpha) \leq L \}.
	\]
	No upper bound as the one in Proposition \ref{prop:count_L2_bound} can be given for these counting functions. Indeed, the \textit{thin rectangle phenomenon} described in the proof of Proposition \ref{prop:count_L2_bound} can be used to show that $b(\cdot,L) \notin L^{2}(\mathcal{M}_{g,n},\widehat{\mu}_{\text{wp}})$ for every $L > 0$, in contrast with Proposition \ref{prop:F_L2}.\\
\end{remark}

\begin{remark}
	\label{rem:alt_proof}
	Fix $\gamma \in \mathcal{ML}_{g,n}(\mathbf{Z})$. Given $X \in \mathcal{M}_{g,n}$, Theorems \ref{theo:mir_count} and \ref{theo:lead} ensure
	\[
	\lim_{L \to \infty} \frac{b_{g,n}}{c(\gamma)} \cdot \frac{s(X,\gamma,L)}{L^{6g-6+2n}} = B(X).
	\]
	Fix $0 < \epsilon < 1$ small enough according to Proposition \ref{prop:count_L2_bound}. By Proposition \ref{prop:count_L2_bound} we can find a constant $C > 0$ depending only on $g$ and $n$ such that for all big enough $L >0$ and all $X \in \mathcal{M}_{g,n}$,
	\begin{equation}
	\label{eq:rmk1}
	\frac{b_g}{c(\gamma)} \cdot \frac{s(X,\gamma,L)}{L^{6g-6+2n}} \leq C \cdot F(X),
	\end{equation}
	where $F \colon \mathcal{M}_{g,n} \to \mathbf{R}_{>0}$ is the function defined in (\ref{eq:F}). Taking $L \to \infty$ in (\ref{eq:rmk1}) we deduce
	\[
	B(X) \leq C \cdot F(X)
	\]
	for all $X \in \mathcal{M}_{g,n}$. This argument yields an alternative proof of Theorem \ref{theo:mir_new_bound}.\\
\end{remark}

\begin{remark}
	Remark \ref{rem:B_notin_L3} and the arguments in Remark \ref{rem:alt_proof} show that the upper bound in Proposition \ref{prop:count_L2_bound} cannot be improved to attain more integrablity of the bounding function if we want the bound to hold uniformly for all big enough $L >0$.\\
\end{remark}

Theorem \ref{theo:joint_freq_rel} now easily follows from Theorems \ref{theo:mir_count} and \ref{theo:lead}, Proposition \ref{prop:count_L2_bound}, and the dominated convergence theorem.\\

\begin{proof}[Proof of Theorem \ref{theo:joint_freq_rel}]
	By Theorems \ref{theo:mir_count} and \ref{theo:lead} we have
	\[
	\lim_{L \to \infty} \frac{s(X,\gamma_i,L)}{L^{6g-6+2n}} = \frac{c(\gamma_i) \cdot B(X)}{b_{g,n}} \\
	\]
	for every $X \in \mathcal{M}_{g,n}$ and every $i \in \{1,2\}$. It follows that 
	\[
	\lim_{L \to \infty} \frac{s(X,\gamma_1,L) \cdot s(X,\gamma_2,L) }{L^{12g-12+4n}} = \frac{c(\gamma_1) \cdot c(\gamma_2)}{b_{g,n}^2} \cdot B(X)^2.
	\]
	for every $X \in \mathcal{M}_{g,n}$. Fix $0 < \epsilon < 1$ small enough according to Proposition \ref{prop:count_L2_bound}. By Proposition \ref{prop:count_L2_bound} we have
	\[
	\frac{s(X,\gamma_1,L) \cdot s(X,\gamma_2,L) }{L^{12g-12+4n}} \leq C \cdot F(X)^2
	\]
	for all big enough $L > 0$ and all $X \in \mathcal{M}_{g,n}$, where $C  > 0$ is a constant depending only on $g$ and $n$, and  $F \colon \mathcal{M}_{g,n} \to \mathbf{R}_{>0}$ is the function defined in (\ref{eq:F}). Proposition \ref{prop:F_L2} shows that $F^2 \in L^1(\mathcal{M}_{g,n},\widehat{\mu}_{\text{wp}})$. By the dominated convergence theorem, the limit in the definition (\ref{eq:joint_freq_2}) of $c(\gamma_1,\gamma_2)$ exists and moreover,
	\[
	c(\gamma_1,\gamma_2) = \frac{c(\gamma_1) \cdot c(\gamma_2)}{b_{g,n}^2} \cdot \int_{\mathcal{M}_{g,n}} B(X)^2 \ d\widehat{\mu}_{\text{wp}}(X) = \frac{a_{g,n}}{b_{g,n}^2} \cdot c(\gamma_1) \cdot c(\gamma_2).
	\]
	This finishes the proof.
\end{proof}
$ $

\textit{Recovering $b_{g,n}$ and $a_{g,n}$ from frequencies and joint frequencies.} Theorem 5.3 of \cite{Mir08b} establishes the following relation between the constant $b_{g,n}$ and the frequencies $c(\gamma)$:  \\

\begin{theorem}
	\label{theo:vol_freq}
	For any integers $g,n \geq 0$ such that $2 -2g -n < 0$,
	\[
	b_{g,n} = \sum_{\gamma \in \mathcal{ML}_{g,n}(\mathbf{Z})/\text{Mod}_{g,n}} c(\gamma).
	\]
\end{theorem}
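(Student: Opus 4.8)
\textit{Proof proposal.} The plan is to establish the two inequalities $\sum_{[\gamma]} c(\gamma) \le b_{g,n}$ and $b_{g,n} \le \sum_{[\gamma]} c(\gamma)$, where the sums range over $[\gamma] \in \mathcal{ML}_{g,n}(\mathbf{Z})/\text{Mod}_{g,n}$. Set $d := 6g-6+2n$, and for $X \in \mathcal{M}_{g,n}$ and $L>0$ let $b(X,L) := \#\{\alpha \in \mathcal{ML}_{g,n}(\mathbf{Z}) : \ell_X(\alpha) \le L\}$ as in Remark \ref{rem:b(X,L)_no_bound}; recall $b_{g,n} = \int_{\mathcal{M}_{g,n}} B(X)\, d\widehat{\mu}_{\text{wp}}(X)$. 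Splitting $\mathcal{ML}_{g,n}(\mathbf{Z})$ into $\text{Mod}_{g,n}$-orbits gives, for every $L$,
\[
b(X,L) = \sum_{[\gamma]} s(X,\gamma,L), \qquad \int_{\mathcal{M}_{g,n}} b(X,L)\, d\widehat{\mu}_{\text{wp}}(X) = \sum_{[\gamma]} P(L,\gamma),
\]
the second identity by Tonelli's theorem (all terms are non-negative). I will also use that $b(X,L)/L^{d} \to B(X)$ as $L\to\infty$ for every fixed $X$: by homogeneity of hyperbolic length and the scaling property of $\mu_{\text{Thu}}$ this is exactly the convergence of the counting measures (\ref{ML_counting_measure}) to $\mu_{\text{Thu}}$ evaluated on the compact $\mu_{\text{Thu}}$-continuity set $\{\lambda : \ell_X(\lambda) \le 1\}$ (its boundary lies in the Lebesgue-null level set $\{\lambda : \ell_X(\lambda) = 1\}$).

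For the lower bound, fix a finite set $S$ of orbits. From $b(X,L) \ge \sum_{[\gamma]\in S} s(X,\gamma,L)$, dividing by $L^{d}$ and letting $L\to\infty$ (a finite sum, so Theorem \ref{theo:mir_count} applies termwise) gives $B(X) \ge \sum_{[\gamma]\in S} n_\gamma(X)$ for every $X$. Integrating over $\mathcal{M}_{g,n}$ and using $\int_{\mathcal{M}_{g,n}} n_\gamma(X)\, d\widehat{\mu}_{\text{wp}}(X) = c(\gamma)$ — which follows from Theorem \ref{theo:lead}, or alternatively from Theorem \ref{theo:mir_count}, Proposition \ref{prop:count_L2_bound}, Proposition \ref{prop:F_L2}, and dominated convergence — yields $\sum_{[\gamma]\in S} c(\gamma) \le b_{g,n}$; taking the supremum over all finite $S$ gives $\sum_{[\gamma]} c(\gamma) \le b_{g,n}$.

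The upper bound hinges on the summability estimate $\sum_{[\gamma]} P(\epsilon_0,\gamma) < \infty$ for a fixed, sufficiently small $\epsilon_0 \in (0,1)$. To see it, observe that any integral multi-curve of hyperbolic length $\le \epsilon_0$ on $X$ has every component of length $\le \epsilon_0$, hence (for $\epsilon_0$ small) is supported on the finite, pairwise disjoint collection of simple closed geodesics of length $\le \epsilon_0$ on $X$; counting admissible integer weight vectors gives the pointwise bound $b(X,\epsilon_0) \le \prod_{\gamma : \ell_\gamma(X)\le\epsilon_0}(1 + \epsilon_0/\ell_\gamma(X))$. A Fenchel–Nielsen coordinate computation identical in structure to the one in the proof of Proposition \ref{prop:F_L2} (cover $\mathcal{M}_{g,n}$ by finitely many charts via Theorem \ref{theo:bers}, then integrate, using $\int_0^{\epsilon_0}\int_0^{\ell}(1+\epsilon_0/\ell)\, d\tau\, d\ell = \int_0^{\epsilon_0}(\ell+\epsilon_0)\, d\ell < \infty$) then shows $\int_{\mathcal{M}_{g,n}} b(X,\epsilon_0)\, d\widehat{\mu}_{\text{wp}}(X) < \infty$, i.e. $\sum_{[\gamma]} P(\epsilon_0,\gamma) < \infty$; alternatively this finiteness can be read off from Theorem \ref{theo:count_integral} by summing over integer weights and bounding the resulting sum by a power of $\zeta(2) = \sum_{m\ge 1} m^{-2}$ times an integral of a volume polynomial. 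By Proposition \ref{prop:mir_freq}, $P(L,\gamma)$ is a polynomial in $L$ of degree $d$ with non-negative coefficients and leading coefficient $c(\gamma)$; hence $L \mapsto P(L,\gamma)/L^{d}$ is non-increasing on $(0,\infty)$ and decreases to $c(\gamma)$, and on $[\epsilon_0,\infty)$ it is dominated by its value $P(\epsilon_0,\gamma)/\epsilon_0^{d}$ at $L=\epsilon_0$, which is summable over $[\gamma]$ (in particular $\sum_{[\gamma]} c(\gamma) \le \epsilon_0^{-d}\sum_{[\gamma]} P(\epsilon_0,\gamma) < \infty$). Dominated convergence for series and Fatou's lemma applied to $b(X,L)/L^{d}\to B(X)$ then give
\[
\sum_{[\gamma]} c(\gamma) = \lim_{L\to\infty} \frac{1}{L^{d}} \int_{\mathcal{M}_{g,n}} b(X,L)\, d\widehat{\mu}_{\text{wp}}(X) \ge \int_{\mathcal{M}_{g,n}} B(X)\, d\widehat{\mu}_{\text{wp}}(X) = b_{g,n}.
\]
Combined with the lower bound, $b_{g,n} = \sum_{[\gamma]} c(\gamma)$.

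I expect the main obstacle to be the summability estimate $\sum_{[\gamma]} P(\epsilon_0,\gamma) < \infty$: the rest is bookkeeping with Tonelli, Fatou, and dominated/monotone convergence, whereas this step requires genuine input, either the structural description of short multi-curves together with an explicit Weil–Petersson volume integral, or Mirzakhani's integration formula together with convergence of $\sum_m m^{-2}$. A secondary subtlety, responsible for the two-sided structure of the argument, is that $b(X,L)/L^{d}$ admits no dominating function in $L^1(\mathcal{M}_{g,n},\widehat{\mu}_{\text{wp}})$ uniformly in $L$ (cf.\ Remark \ref{rem:b(X,L)_no_bound}), so the lower bound cannot be obtained by a reverse Fatou inequality and must come from the pointwise comparison $B(X) \ge \sum_{[\gamma]\in S} n_\gamma(X)$.
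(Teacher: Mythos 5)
Your proposal is correct, but it follows a genuinely different route from the paper. The paper fixes $X$ and proves the pointwise identity $B(X)=\sum_{[\gamma]}c(\gamma)B(X)/b_{g,n}$ by exchanging $\lim_{L\to\infty}$ with the infinite sum over integer weights $\mathbf{a}$ for each topological type, the domination being supplied by Lemma \ref{lem:count_bound}, i.e. $s(X,\mathbf{a}\cdot\gamma,L)/L^{6g-6+2n}\leq C(X)\prod_i a_i^{-2}$; that lemma in turn converts Mirzakhani's integration formula (Theorem \ref{theo:count_integral}, after the change of variables $u_i=a_ix_i$ and monotonicity of the volume polynomial) into a pointwise bound by averaging over a unit ball in the symmetric Thurston metric. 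Only afterwards does the paper integrate, via Tonelli. You instead stay entirely at the integrated level and argue two inequalities: the inequality $\sum_{[\gamma]}c(\gamma)\leq b_{g,n}$ from finite subsums, termwise limits, and $\int n_\gamma\,d\widehat{\mu}_{\text{wp}}=c(\gamma)$; and the reverse inequality from the summability $\sum_{[\gamma]}P(\epsilon_0,\gamma)<\infty$ at one small scale, the monotonicity of $L\mapsto P(L,\gamma)/L^{6g-6+2n}$ (which does follow from the non-negativity of the coefficients in Proposition \ref{prop:mir_freq}), dominated convergence for series, and Fatou. Both of your suggested proofs of the summability are sound: the geometric one is valid because every component of an integral multi-curve of length $\leq\epsilon_0$ is itself of length $\leq\epsilon_0$, hence supported on the pairwise disjoint short geodesics, and the ensuing Fenchel--Nielsen integral converges; the alternative one is exactly the integrated half of the paper's Lemma \ref{lem:count_bound} (the $\prod_i a_i^{-2}$ bound on $P(\epsilon_0,\mathbf{a}\cdot\gamma)$ summed against $\zeta(2)^k$). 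The trade-off: the paper's argument yields the stronger pointwise statement $B(X)=\sum_{[\gamma]}n_\gamma(X)$ for every $X$, at the price of the Thurston-ball unfolding needed to make the weight bound pointwise; your argument avoids that lemma entirely (in its geometric variant it even avoids the integration formula for this step), but, as you correctly note, the absence of an $L^1$-dominating function uniform in $L$ forces the two-sided Fatou-plus-finite-subsum structure and delivers only the integrated identity.
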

$ $

The authors feel the need to include a proof of Theorem \ref{theo:vol_freq} as some details, most likely known to Mirzakhani herself, are omitted in \cite{Mir08b}. Our proof relies on the following rough estimate, which separates the dependence of the counting function $s(X,\mathbf{a} \cdot \gamma, L)$ on the hyperbolic structure $X \in \mathcal{M}_{g,n}$ and weight parameters $\mathbf{a} \in \mathbf{N}^k$.\\

\begin{lemma}
	\label{lem:count_bound}
	Let $X \in \mathcal{M}_{g,n}$ and $\gamma := (\gamma_1,\dots,\gamma_k)$ with $1 \leq k \leq N$ be an ordered unweighted multi-curve on $S_{g,n}$. There exist constants $C := C(X)>0$ and $L_0 > 0$ such that for all $\mathbf{a} := (a_1,\dots,a_k) \in \mathbf{N}^k$ and all $L \geq L_0$, the following bound holds:
	\[
	\frac{s(X, \mathbf{a} \cdot \gamma,L)}{L^{6g-6+2n}} \leq C \cdot \prod_{i=1}^k \frac{1}{a_i^2}.
	\]
\end{lemma}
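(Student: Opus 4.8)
The plan is to reduce the bound on $s(X,\mathbf{a}\cdot\gamma,L)$ to a bound on the counting function for the underlying unweighted multi-curve, using the homogeneity of hyperbolic length. Fix $X\in\mathcal{M}_{g,n}$ and an ordered multi-curve $\gamma=(\gamma_1,\dots,\gamma_k)$. First I would observe that every element of $\mathrm{Mod}_{g,n}\cdot(\mathbf{a}\cdot\gamma)$ is of the form $\mathbf{a}\cdot\alpha$ for some $\alpha=(\alpha_1,\dots,\alpha_k)\in\mathrm{Mod}_{g,n}\cdot\gamma$, and $\ell_{\mathbf{a}\cdot\alpha}(X)=\sum_{i=1}^k a_i\,\ell_{\alpha_i}(X)$. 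Since $a_i\geq 1$ for all $i$ (because $\mathbf{a}\in\mathbf{N}^k$), the condition $\ell_{\mathbf{a}\cdot\alpha}(X)\leq L$ forces $\ell_{\alpha_i}(X)\leq L/a_i$ for each $i$, and in particular $\sum_i \ell_{\alpha_i}(X)\leq L$. Hence
\[
s(X,\mathbf{a}\cdot\gamma,L)\leq \#\{\alpha\in\mathrm{Mod}_{g,n}\cdot\gamma \ |\ \ell_{\alpha_i}(X)\leq L/a_i \ \forall i\}.
\]
The second step is to turn this into a product. Since $\mathrm{Mod}_{g,n}\cdot\gamma$ embeds into $\mathcal{ML}_{g,n}(\mathbf{Z})$ via geodesic representatives and the individual curves $\alpha_i$ are components of a multi-curve with bounded intersection pattern, I would fix a pair of pants decomposition $\mathcal{P}$ that is $L_{g,n}$-bounded on $X$ (Theorem \ref{theo:bers}) together with the Dehn-Thurston coordinates from Lemma \ref{length_comparison_lemma}, and count lattice points in the corresponding Dehn-Thurston box, exactly as in the proof of Proposition \ref{prop:count_L2_bound}. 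The upshot is that, for $L$ large enough (depending only on $g,n$), the count $\#\{\alpha:\ell_{\alpha_i}(X)\leq L/a_i\}$ is bounded by a constant depending on $X$ times $\prod_{i=1}^k (L/a_i)^{d_i}$, where $d_i$ is the number of Dehn-Thurston coordinates "charged" by the component $\alpha_i$; summing over components gives total degree $\sum_i d_i = 6g-6+2n = 2N$, because the full orbit $\mathrm{Mod}_{g,n}\cdot\gamma$ ranges over a $2N$-(real-)dimensional region of $\mathcal{ML}_{g,n}$ and the integral-point count is asymptotic to the Thurston volume (cf.\ the convergence $\mu^L\to\mu_{\mathrm{Thu}}$ recorded in \S\ref{background}).

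Carrying this through, one gets
\[
\frac{s(X,\mathbf{a}\cdot\gamma,L)}{L^{6g-6+2n}}\leq C(X)\cdot\prod_{i=1}^k\frac{1}{a_i^{d_i}}
\]
for all $L\geq L_0$, with $L_0$ depending only on $g,n,L_{g,n}$. To finish I would note $d_i\geq 1$ for each $i$ (any nonempty simple closed curve charges at least one Dehn-Thurston coordinate), but the statement asks for the exponent $2$. The cleanest way to get $a_i^{-2}$ is to be slightly more generous: instead of the sharp threshold $\ell_{\alpha_i}(X)\leq L/a_i$ I would only use $\ell_{\alpha_i}(X)\leq L/a_i$ together with the global constraint $\sum_i a_i\ell_{\alpha_i}(X)\le L$; pairing the intersection-number coordinate $m_i$ (bounded by $O(L/(a_i w(\ell_{\gamma_i}(X))))$) and the twist coordinate $t_i$ (bounded by $O(L/(a_i\ell_{\gamma_i}(X)))$) associated with each component $\alpha_i$ contributes a factor $O(L^2/a_i^2)$, and the remaining coordinates contribute $O(L^{2N-2k})$ with constants depending on $X$ via $\ell_{\gamma_i}(X)$ and $w(\ell_{\gamma_i}(X))$. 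Absorbing all $X$-dependent quantities into a single constant $C=C(X)$ yields the claimed bound $\frac{s(X,\mathbf{a}\cdot\gamma,L)}{L^{6g-6+2n}}\leq C\prod_{i=1}^k a_i^{-2}$.

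The main obstacle is bookkeeping the lattice-point count cleanly enough to (a) isolate exactly two coordinates per component whose ranges shrink like $1/a_i$, and (b) keep the total $L$-degree equal to $6g-6+2n$ rather than overcounting; this is where the thin-rectangle subtlety from Proposition \ref{prop:count_L2_bound} reappears, since for components $\gamma_i$ with $w(\ell_{\gamma_i}(X))$ very large the naive box estimate is wasteful. As in the proof of Proposition \ref{prop:count_L2_bound}, the fix is the observation that a component with $w(\ell_{\gamma_i}(X))>L$ cannot be intersected by any curve in the count, so $m_i=0$ and there are only finitely many values of the corresponding twist $t_i$ (at most $3g-2+n$ of them), which can be absorbed into $C(X)$; here we are allowed this freedom precisely because $C$ is permitted to depend on $X$, unlike in Proposition \ref{prop:count_L2_bound}. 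Everything else—the reduction via $a_i\geq 1$, the Bers pants decomposition, and Lemma \ref{length_comparison_lemma}—is routine.
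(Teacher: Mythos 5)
Your first reduction is fine: every element of $\mathrm{Mod}_{g,n}\cdot(\mathbf{a}\cdot\gamma)$ is $\mathbf{a}\cdot\alpha$ with $\alpha$ in the orbit of the ordered tuple, and additivity of length gives $\ell_{\alpha_i}(X)\leq L/a_i$. The gap is in the central counting step. The Dehn--Thurston coordinates $(m_j,t_j)_{j=1}^N$ supplied by Lemma \ref{length_comparison_lemma} are indexed by the cuffs of the fixed Bers decomposition $\mathcal{P}_X$, not by the components $\alpha_1,\dots,\alpha_k$ of the multi-curve being counted; a single component may cross many cuffs, or be isotopic to one and cross none. So ``the intersection coordinate $m_i$ and twist coordinate $t_i$ associated with the component $\alpha_i$'' is not a well-defined pair of coordinates, and there is no reason the constraint $\ell_{\alpha_i}(X)\leq L/a_i$ shrinks exactly two coordinates of the union by $1/a_i$ while leaving the other $2N-2k$ with full range $L$. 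That assertion is precisely the nontrivial content: naive per-component box counts (one $2N$-dimensional box of size $\asymp (L/a_i)^{2N}$ per component) give total $L$-degree $2Nk$, and nowhere in your sketch is the disjointness of the components used quantitatively to cut this down to $2N$; conversely, counting the union $\beta=\sum_i a_i\alpha_i$ in a single box of combinatorial length $\leq CL$ gives the right $L$-degree but no decay in $\mathbf{a}$. The natural ``two degrees of freedom per component'' are the length of $\alpha_i$ and the twist along $\alpha_i$ itself, i.e.\ coordinates adapted to a pants decomposition containing the $\alpha_i$'s --- but that decomposition varies with $\alpha$ and is in general not $L_{g,n}$-bounded on $X$, so Lemma \ref{length_comparison_lemma} does not apply to it. Making your ansatz rigorous at a fixed $X$ would essentially amount to redoing Mirzakhani's unfolding computation pointwise, which the thin-rectangle bookkeeping of Proposition \ref{prop:count_L2_bound} does not provide. (A smaller issue: for large $a_i$ the box side $CL/(a_i w_j)+1$ is dominated by the $+1$, so the per-component box bound degenerates to $O(1)$ and yields no $a_i$-decay in that regime; this also needs separate treatment.)

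For comparison, the paper's proof sidesteps the fixed-$X$ count entirely: by Mirzakhani's integration formula (Theorem \ref{theo:count_integral}), the integral of $s(\cdot,\mathbf{a}\cdot\gamma,L)$ over $\mathcal{M}_{g,n}$ equals $\kappa(\gamma,\mathbf{a})\int_{\mathbf{a}\cdot\mathbf{x}\leq L}V_{g,n}(\gamma,\mathbf{x})\,\mathbf{x}\,d\mathbf{x}$; the substitution $u_i=a_ix_i$ produces the factor $\prod_i a_i^{-2}$ (one $a_i^{-1}$ from the Jacobian, one from the factor $\mathbf{x}$ --- this is exactly the ``length and twist per component'' bookkeeping, done by the unfolding), and positivity of the coefficients of $V_{g,n}$ (Theorem \ref{theo:vol pol}) gives $V_{g,n}(\gamma,\mathbf{u}/\mathbf{a})\leq V_{g,n}(\gamma,\mathbf{u})$, hence a bound by $\kappa(\gamma,\mathbf{a})\prod_i a_i^{-2}\,P(L,\mathbf{1}\cdot\gamma)$. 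The pointwise statement then follows by comparing $s(X,\mathbf{a}\cdot\gamma,L)\leq s(Y,\mathbf{a}\cdot\gamma,eL)$ for $Y$ in the unit ball $U_X(1)$ of the symmetric Thurston metric, integrating over that ball, and dividing by $\widehat{\mu}_{\text{wp}}(U_X(1))>0$; this is where the $X$-dependence of $C$ enters. If you want to salvage your approach, the cleanest fix is to adopt this averaging step rather than attempt a direct Dehn--Thurston lattice count.
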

$ $

\begin{proof}
	 The following proof makes strong use of Mirzakhani's integration formulas, in particular of Theorem \ref{theo:count_integral}; we refer the reader to the statement of such theorem for the notation used throughout the rest of this proof. Let $\mathbf{a} := (a_1,\dots,a_k) \in \mathbf{N}^k$ and $L > 0$ be arbitrary. According to Theorem \ref{theo:count_integral},
	\[
	\int_{\mathcal{M}_{g,n}} s(X, \mathbf{a} \cdot \gamma,L) \ d\widehat{\mu}_{\text{wp}}(X)  = \kappa(\gamma, \mathbf{a}) \cdot \int_{\mathbf{a} \cdot \mathbf{x} \leq L} V_{g,n}(\gamma,\mathbf{x}) \cdot \mathbf{x} \ d \mathbf{x}.
	\]
	For all $i \in \{1,\dots,k\}$ consider the change of variables $u_i := a_i x_i$, so that $du_i = a_i dx_i$. It follows from the change of variables formula that
	\[
	\int_{\mathbf{a} \cdot \mathbf{x} \leq L} V_{g,n}(\gamma,\mathbf{x}) \cdot \mathbf{x} \ d \mathbf{x} = \prod_{i=1}^k \frac{1}{a_i^2} \cdot\int_{\mathbf{1} \cdot \mathbf{u} \leq L} V_{g,n}(\gamma, \mathbf{u}/\mathbf{a}) \ \mathbf{u} \cdot d \mathbf{u},
	\]
	where $\mathbf{u} = u_1 \cdots u_k$, $d \mathbf{u} = du_1 \cdots du_k$, and $\mathbf{u}/\mathbf{a} = (u_1/a_1,\dots,u_k/a_k)$. As a consequence of Theorem \ref{theo:vol pol}, the function  $V_{g,n}(\gamma,\mathbf{x})$ is a polynomial in $\mathbf{x}$ with non-negative coefficients. It follows that (since each $a_i \geq 1$)
	\[
	V_{g,n}(\gamma, \mathbf{u}/\mathbf{a}) \leq V_{g,n}(\gamma, \mathbf{u})
	\]
	for all $\mathbf{u} \in \mathbf{R}^k$ with non-negative entries. In particular,
	\[
	\int_{\mathbf{1} \cdot \mathbf{u} \leq L} V_{g,n}(\gamma, \mathbf{u}/\mathbf{a}) \ \mathbf{u} \cdot d \mathbf{u}  \leq \int_{\mathbf{1} \cdot \mathbf{u} \leq L} V_{g,n}(\gamma, \mathbf{u}) \cdot \mathbf{u} \ d \mathbf{u}.
	\]
	By Theorem \ref{theo:count_integral}, 
	\[
	P(L,\mathbf{1} \cdot \gamma) := \int_{\mathcal{M}_{g,n}} s(X,\mathbf{1} \cdot \gamma, L) \ d\widehat{\mu}_{wp}(X) = \int_{\mathbf{1} \cdot \mathbf{u} \leq L} V_{g,n}(\gamma, \mathbf{u}) \cdot \mathbf{u} \ d \mathbf{u},
	\]
	Putting everything together we deduce
	\begin{equation}
	\label{eq:integral_bound}
	\int_{\mathcal{M}_{g,n}} s(X, \mathbf{a} \cdot \gamma,L) \ d\widehat{\mu}_{\text{wp}}(X) \leq \kappa(\gamma,\mathbf{a}) \cdot \prod_{i=1}^k \frac{1}{a_i^2} \cdot P(L,\mathbf{1} \cdot \gamma).
	\end{equation}
	$ $
	
	Let $X \in \mathcal{M}_{g,n}$ and $L > 0$ be arbitrary. We denote by $U_X(1) \subseteq \mathcal{M}_{g,n}$ the closed ball of radius $1$ centered at $X$ in the quotient symmetric Thurston metric. By definition, $Y\in U_X(1)$ if and only if there is a choice of markings for $X$ and $Y$ (allowing us to consider them as points in $\mathcal{T}_{g,n}$) such that for all $\lambda \in \mathcal{ML}_{g,n}$ the following bounds hold:
	\[
	e^{-1}\leq \frac{\ell_X(\lambda)}{\ell_Y(\lambda)} \leq e.
	\]
	 In particular, if $Y\in U_X(1)$ then
	 \[
	 s(X, \mathbf{a} \cdot \gamma,L) \leq s(Y, \mathbf{a} \cdot \gamma, e L).
	 \]
	This observation gives the following rough bound:
	\begin{align*}
	s(X, \mathbf{a} \cdot \gamma,L) \cdot \widehat{\mu}_{\text{wp}}(U_X(1)) &= \int_{\mathcal{M}_g} \mathbbm{1}_{U_X(1)}(Y) \cdot s(X,\mathbf{a} \cdot \gamma,L) \ d\widehat{\mu}_{wp}(Y)\\
	&\leq \int_{\mathcal{M}_g} \mathbbm{1}_{U_X(1)}(Y) \cdot s(Y, \mathbf{a} \cdot \gamma, eL) \ d\widehat{\mu}_{wp}(Y)\\
	&\leq \int_{\mathcal{M}_g} s(Y, \mathbf{a} \cdot \gamma, eL) \ d\widehat{\mu}_{wp}(Y)\\
	&\leq \kappa(\gamma,\mathbf{a}) \cdot \prod_{i=1}^k \frac{1}{a_i^2} \cdot P(eL,\mathbf{1} \cdot \gamma),
	\end{align*}
	where the last inequality follows from (\ref{eq:integral_bound}). Notice $\widehat{\mu}_{\text{wp}}(U_X(1)) > 0$ because $U_X(1)$ is a neighborhood of $X$ and $\widehat{\mu}_{\text{wp}}$ has full support on $\mathcal{M}_{g,n}$. We deduce
	\[
	\frac{s(X, \mathbf{a} \cdot \gamma, L)}{L^{2N}} \leq \kappa(\gamma, \mathbf{a})  \cdot \frac{e^{6g-6+2n}}{\widehat{\mu}_{\text{wp}}(U_X(1))} \cdot \frac{P(eL,\mathbf{1}\cdot \gamma)}{(eL)^{6g-6+2n}} \cdot \prod_{i=1}^k \frac{1}{a_i^2}.
	\]
	By Proposition \ref{prop:mir_freq}, 
	\[
	\lim_{L \to \infty} \frac{P(eL,\mathbf{1}\cdot \gamma)}{(eL)^{6g-6+2n}} = c(\mathbf{1} \cdot \gamma) > 0.
	\]
	Let $L_0 >0$ be big enough so that 
	\[
	\frac{P(eL,\mathbf{1}\cdot \gamma)}{(eL)^{6g-6+2n}} \leq 2 \cdot c(\mathbf{1} \cdot \gamma) 
	\]
	for all $L \geq L_0$. It follows that 
	\[
	s(X, \mathbf{a} \cdot \gamma, L) \leq \kappa(\gamma,\mathbf{a}) \cdot \frac{2 \cdot e^{6g-6+2n} \cdot c(\mathbf{1} \cdot \gamma)}{\widehat{\mu}_{\text{wp}}(U_X(1))}  \cdot \prod_{i=1}^k \frac{1}{a_i^2} 
	\]
	for all $L \geq L_0$. As $\kappa(\gamma, \mathbf{a})$ takes only finitely many values when $\mathbf{a}$ ranges over $\mathbf{N}^k$, see Theorem \ref{theo:count_integral}, this finishes the proof.
\end{proof}
$ $

We are now ready to prove Theorem \ref{theo:vol_freq}.\\

\begin{proof}[Proof of Theorem \ref{theo:vol_freq}]
	As in Remark \ref{rem:b(X,L)_no_bound}, for every $X \in \mathcal{M}_{g,n}$ and every $L > 0$ we consider the counting function
	\[
	b(X,L):= \# \{\alpha \in \mathcal{ML}_{g,n}(\mathbf{Z}) \ | \ \ell_X(\alpha) \leq L \}.
	\]
	By the definition of the Thurston measure, see the paragraph following (\ref{ML_counting_measure}), for every $X \in \mathcal{M}_{g,n}$ we have
	\[
	\lim_{L \to \infty} \frac{b(X,L)}{L^{6g-6+2n}} = B(X).
	\]
	It follows that we can write
	\[
	b_{g,n} := \int_{\mathcal{M}_{g,n}} B(X) \ d\widehat{\mu}_{wp}(X) = \int_{\mathcal{M}_{g,n}}  \lim_{L \to \infty} \frac{b(X,L)}{L^{6g-6+2n}} \  d\widehat{\mu}_{wp}(X) .
	\]
	Fix $X \in \mathcal{M}_{g,n}$. Notice that we can decompose $b(X,L)$ as the sum of the counting functions $s(X,\gamma,L)$ with $\gamma$ ranging over all mapping class group orbits of integral multi-curves on $S_{g,n}$:
	\[
	b(X,L) = \sum_{\gamma \in \mathcal{ML}_{g,n}(\mathbf{Z})/\text{Mod}_{g,n}} s(X,\gamma,L).
	\]
	Let $\mathcal{C}_{g,n}$ be the finite set of all topological types of unweighted multi-curves on $S_{g,n}$. For every $\gamma := \{\gamma_1,\dots,\gamma_k\} \in \mathcal{C}_{g,n}$ choose an arbitrary ordering of its components; we will also denote the corresponding ordered topological multi-curve by $\gamma := (\gamma_1,\dots,\gamma_k)$. We write
	\[
	b(X,L) = \sum_{\gamma \in \mathcal{C}_{g,n}} \sum_{\mathbf{a} \in \mathbf{N}^k} s(X,\mathbf{a} \cdot \gamma, L).
	\]
	As the outside sum in this equality is finite, we deduce
	\begin{equation}
	\label{eq:out_lim_ex}
	\lim_{L \to \infty} \frac{b(X,L)}{L^{6g-6+2n}} = \sum_{\gamma \in \mathcal{C}_{g,n}} \ \lim_{L \to \infty} \sum_{\mathbf{a} \in \mathbf{N}^k} \frac{s(X,\mathbf{a}\cdot \gamma,L)}{L^{6g-6+2n}}.
	\end{equation}
	We now exchange the limit in the right hand side of this equality with the infinite inside sum by using the dominated convergence theorem. By Theorems \ref{theo:mir_count} and \ref{theo:lead}, for every $\mathbf{a} \in \mathbf{N}^k$ we have
	\[
	\lim_{L \to \infty} \frac{s(X, \mathbf{a} \cdot \gamma,L)}{L^{6g-6+2n}}  = \frac{c(\mathbf{a} \cdot \gamma) \cdot B(X)}{b_{g,n}}.
	\]
	Lemma \ref{lem:count_bound} provides constants $C > 0$ and $L_0>0$ such that for all $\mathbf{a}:= (a_1,\dots,a_k) \in \mathbf{N}^k$ and all $L \geq L_0$,
	\[
	\frac{s(X, \mathbf{a} \cdot \gamma,L)}{L^{6g-6+2n}} \leq C \cdot \prod_{i=1}^k \frac{1}{a_i^2}.
	\]
	Notice that
	\[
	\sum_{\mathbf{a} \in \mathbf{N}^k} \prod_{i=1}^k \frac{1}{a_i^2} = \zeta(2)^k < +\infty,
	\]
	so the dominated convergence theorem applies. We deduce
	\[
	\lim_{L \to \infty} \sum_{\mathbf{a}\in \mathbf{N}^k} \frac{s(X,\mathbf{a} \cdot \gamma,L)}{L^{6g-6+2n}} = \sum_{\mathbf{a} \in \mathbf{N}^k} \frac{c(\mathbf{a} \cdot \gamma) \cdot B(X)}{b_{g,n}}
	\]
	for every $\gamma \in \mathcal{C}_{g,n}$. It follows from (\ref{eq:out_lim_ex}) that
	\[
	\lim_{L \to \infty} \frac{b(X,L)}{L^{6g-6+2n}} = \sum_{\gamma \in \mathcal{C}_{g,n}}  \sum_{\mathbf{a} \in \mathbf{N}^k} \ \frac{c(\mathbf{a} \cdot \gamma) \cdot B(X)}{b_{g,n}} 
	= \sum_{\gamma \in \mathcal{ML}_{g,n}(\mathbf{Z})/\text{Mod}_{g,n}} \frac{c(\gamma) \cdot B(X)}{b_{g,n}}.
	\]
	This equality holds for every $X \in \mathcal{M}_{g,n}$, so we have
	\[
	\int_{\mathcal{M}_{g,n}}  \lim_{L \to \infty} \frac{b(X,L)}{L^{6g-6+2n}} \ d\widehat{\mu}_{\text{wp}}(X) = \int_{\mathcal{M}_{g,n}} \sum_{\gamma \in \mathcal{ML}_{g,n}(\mathbf{Z})/\text{Mod}_{g,n}} \frac{c(\gamma) \cdot B(X)}{b_{g,n}} \ d\widehat{\mu}_{\text{wp}}(X).
	\]
	Fubini's theorem for non-negative functions gives
	\begin{align*}
	&\int_{\mathcal{M}_{g,n}} \sum_{\gamma \in \mathcal{ML}_{g,n}(\mathbf{Z})/\text{Mod}_{g,n}} \frac{c(\gamma) \cdot B(X)}{b_{g,n}} \ d\widehat{\mu}_{\text{wp}}(X) \\
	&= \sum_{\gamma \in \mathcal{ML}_{g,n}(\mathbf{Z})/\text{Mod}_{g,n}} \ \int_{\mathcal{M}_{g,n}} \frac{c(\gamma) \cdot B(X)}{b_{g,n}} \ d\widehat{\mu}_{\text{wp}}(X).
	\end{align*}
	By the definition of $b_{g,n}$,
	\[
	 \int_{\mathcal{M}_{g,n}}  \frac{c(\gamma) \cdot B(X)}{b_{g,n}} \ d\widehat{\mu}_{\text{wp}}(X) = c(\gamma).
	\]
	Putting everything together we conclude
	\[
	b_{g,n} = \sum_{\gamma \in \mathcal{ML}_{g,n}(\mathbf{Z})/\text{Mod}_{g,n}} c(\gamma),
	\]
	finishing the proof.
\end{proof}
$ $

Directly from Theorems \ref{theo:joint_freq_rel} and  \ref{theo:vol_freq} we obtain an analogous relation between the constant $a_{g,n}$ and the joint frequencies $c(\gamma_1,\gamma_2)$; this finishes the proof of Theorem \ref{theo:main_res_5}.\\

\begin{theorem}
	\label{theo:var_freq}
	For any integers $g,n \geq 0$ such that $2 -2g -n < 0$,
	\[
	a_{g,n} = \sum_{\gamma_1,\gamma_2 \in \mathcal{ML}_{g,n}(\mathbf{Z})/\text{Mod}_{g,n}} c(\gamma_1,\gamma_2).
	\]
\end{theorem}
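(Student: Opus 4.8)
The plan is to deduce Theorem \ref{theo:var_freq} as a purely formal consequence of Theorems \ref{theo:joint_freq_rel} and \ref{theo:vol_freq}, with no new geometric input. First I would invoke Theorem \ref{theo:joint_freq_rel} to replace each joint frequency by its product expression, obtaining
\[
\sum_{\gamma_1,\gamma_2 \in \mathcal{ML}_{g,n}(\mathbf{Z})/\text{Mod}_{g,n}} c(\gamma_1,\gamma_2) = \frac{a_{g,n}}{b_{g,n}^2} \sum_{\gamma_1,\gamma_2 \in \mathcal{ML}_{g,n}(\mathbf{Z})/\text{Mod}_{g,n}} c(\gamma_1) \cdot c(\gamma_2).
\]
Since each $c(\gamma)$ is non-negative (indeed $c(\gamma) \in \mathbf{Q}_{>0}$ by Proposition \ref{prop:mir_freq}) and, by Theorem \ref{theo:vol_freq}, the series $\sum_{\gamma} c(\gamma)$ converges to $b_{g,n} < \infty$, the double series on the right factors as
\[
\sum_{\gamma_1,\gamma_2} c(\gamma_1) \cdot c(\gamma_2) = \left( \sum_{\gamma_1} c(\gamma_1) \right) \left( \sum_{\gamma_2} c(\gamma_2) \right) = b_{g,n}^2.
\]
Substituting back, the two factors of $b_{g,n}^2$ cancel and we are left with $\sum_{\gamma_1,\gamma_2} c(\gamma_1,\gamma_2) = a_{g,n}$, which is the claim.

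The only point that deserves a word of care — rather than a genuine obstacle — is the factorization of the double series together with the implicit claim that both the single and the double series are unconditionally convergent, so that reindexing by $\text{Mod}_{g,n}$-orbits of multi-curves (as opposed to some enumeration) is harmless. This follows from Tonelli's theorem applied to the counting measure on $\bigl(\mathcal{ML}_{g,n}(\mathbf{Z})/\text{Mod}_{g,n}\bigr)^2$, using that every summand $c(\gamma_1)c(\gamma_2)$ is non-negative; the finiteness of $\sum_{\gamma} c(\gamma) = b_{g,n}$ supplied by Theorem \ref{theo:vol_freq} then guarantees the product is finite. In particular no independent convergence argument for the joint-frequency series is required: it equals $a_{g,n}$, which is finite by Theorem \ref{theo:mir_L2}.

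One may also record, for consistency, that Theorem \ref{theo:joint_freq_rel} exhibits each $c(\gamma_1,\gamma_2)$ as $\frac{a_{g,n}}{b_{g,n}^2}\,c(\gamma_1)\,c(\gamma_2) \geq 0$, so the double series $\sum_{\gamma_1,\gamma_2} c(\gamma_1,\gamma_2)$ is itself a series of non-negative terms; this is all that is needed to make the rearrangement above rigorous and to identify the sum with $a_{g,n}$.
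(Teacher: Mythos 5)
Your proposal is correct and follows essentially the same route as the paper: substitute the identity $c(\gamma_1,\gamma_2) = \frac{a_{g,n}}{b_{g,n}^2} c(\gamma_1) c(\gamma_2)$ from Theorem \ref{theo:joint_freq_rel} into the double sum, factor it using the non-negativity of the terms, and evaluate each factor as $b_{g,n}$ via Theorem \ref{theo:vol_freq}, so the $b_{g,n}^2$ cancels and the sum equals $a_{g,n}$. Your remarks on unconditional convergence via Tonelli are a harmless elaboration of what the paper leaves implicit.
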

$ $

\begin{proof}
	By Theorem \ref{theo:joint_freq_rel} we have
	\[
	c(\gamma_1,\gamma_2) = \frac{a_{g,n}}{b_{g,n}^2} \cdot c(\gamma_1) \cdot c(\gamma_2)
	\]
	for every $\gamma_1,\gamma_2 \in \mathcal{ML}_{g,n}(\mathbf{Z})$. Theorem \ref{theo:vol_freq} shows
	\[
	b_{g,n} = \sum_{\gamma \in \mathcal{ML}_{g,n}(\mathbf{Z})/\text{Mod}_{g,n}} c(\gamma).
	\]
	It follows that
	\begin{align*}
	&\sum_{\gamma_1,\gamma_2 \in \mathcal{ML}_{g,n}(\mathbf{Z})/\text{Mod}_{g,n}} c(\gamma_1,\gamma_2) \\
	&= \sum_{\gamma_1,\gamma_2 \in \mathcal{ML}_{g,n}(\mathbf{Z})/\text{Mod}_{g,n}} \frac{a_{g,n}}{b_{g,n}^2} \cdot c(\gamma_1) \cdot c(\gamma_2)\\
	&= \frac{a_{g,n}}{b_{g,n}^2} \cdot \left( \sum_{\gamma_1\in \mathcal{ML}_{g,n}(\mathbf{Z})/\text{Mod}_{g,n}} c(\gamma_1) \right) \cdot \left( \sum_{\gamma_2\in \mathcal{ML}_{g,n}(\mathbf{Z})/\text{Mod}_{g,n}} c(\gamma_2) \right)\\
	&=  \frac{a_{g,n}}{b_{g,n}^2} \cdot b_{g,n} \cdot b_{g,n}\\
	&= a_{g,n},
	\end{align*}
	finishing the proof.
\end{proof}
$ $

\section{Open Questions}

$ $

\textit{Computing $a_{g,n}$ and joint frequencies.} In Theorem 5.3 of \cite{Mir08b}, Mirzakhani gives formulas for the frequencies $c(\gamma)$ and the constant $b_{g,n}$ in terms of leading coefficients of Weil-Petersson volume polynomials of moduli spaces of complete, finite volume hyperbolic surfaces with geodesic boundary. As such polynomials can be computed recursively, see \S 5 in \cite{Mir07a}, this provides an algorithmic procedure for computing the frequencies $c(\gamma)$ and the constant $b_{g,n}$.\\

\begin{question}
	\label{q:1}
	For any pair of integers $g,n \geq 0$ such that $2-2g-n < 0$, provide an algorithmic procedure for computing
	\[
	a_{g,n} := \int_{\mathcal{M}_{g,n}} B(X)^2 \ d \widehat{\mu}_{wp}(X).
	\]
\end{question}
$ $

\begin{question}
	\label{q:2}
	For any pair of integral multi-curves $\gamma_1,\gamma_2 \in \mathcal{ML}_{g,n}(\mathbf{Z})$, provide an algorithmic procedure for computing
	\[
	c(\gamma_1,\gamma_2) := \lim_{L \to \infty} \frac{1}{L^{12g-12+4n}} \int_{\mathcal{M}_{g,n}} s(X,\gamma_1,L) \cdot s(X,\gamma_2,L) \ d\widehat{\mu}_{\text{wp}}(X).
	\]
\end{question}
$ $

Notice that by Theorems \ref{theo:main_res_4} and  \ref{theo:main_res_5} and the work of Mirzakhani cited above, Questions \ref{q:1} and \ref{q:2} are essentially equivalent.\\

\textit{Relating $a_{g,n}$ to moduli spaces of quadratic differentials.} Recall that $b_{g,n}$, the integral of $B$ with respect to the Weil-Petersson measure $\widehat{\mu}_{\text{wp}}$ on $\mathcal{M}_{g,n}$, corresponds to the Masur-Veech measure of the principal stratum of $Q\mathcal{M}_{g,n}$, the moduli space of connected, integrable, meromorphic quadratic differentials of genus $g$ with $n$ marked points. \\

\begin{question}
	Is there a meaningful interpretation of the integral
	\[
	a_{g,n} := \int_{\mathcal{M}_{g,n}} B(X)^2 \ d \widehat{\mu}_{wp}(X)
	\]
	in terms of the moduli space $Q\mathcal{M}_{g,n}$?
\end{question}
$ $

\textit{Large genus aymptotics.} For every pair of integers $g,n \geq 0$ satisfying $2 -2g - n <0$, consider the probability space $(\mathcal{M}_{g,n}, \widehat{\mu}_{\text{wp}}/m_{g,n})$, where $m_{g,n} := \widehat{\mu}_{\text{wp}}(\mathcal{M}_{g,n})$; each moduli space $\mathcal{M}_{g,n}$ has a different Weil-Petersson measure but we denote them all by $\widehat{\mu}_{\text{wp}}$. Each one of these moduli spaces carries a Mirzakhani function $B_{g,n} \colon \mathcal{M}_{g,n} \to \mathbf{R}_{>0}$. For a fixed $n \geq 0$ we are interested in the behavior of $B_{g,n}$ as $g \to \infty$.\\

\begin{question}
	\label{q:4}
	What are the asymptotics of $\mathbf{Var}(B_{g,n}(X))$ as $g \to \infty$? 
\end{question}
$ $

Answering Question \ref{q:4} could provide meaningful insight on the behavior in the large genus regime of the dependency with respect to the hyperbolic structure of the leading coefficient of the asymptotics of counting problems for simple closed geodesics. Indeed, by Theorems \ref{theo:mir_count} and \ref{theo:lead}, such dependency is precisely given by $B_{g,n}$, and Chebyshev's inequality shows that for every $a \geq 0$,
\[
\mathbf{P}(|B_{g,n}(X)- \mathbf{E}(B_{g,n}(X))| \geq a) \leq \frac{\mathbf{Var}(B_{g,n}(X))}{a^2}.
\]
$ $

Inspired by Mirzakhani's work on the geometry of random hyperbolic surfaces of large genus sampled according to the the probability measures $\widehat{\mu}_{\text{wp}}/m_{g,n}$, see \cite{Mir13}, it would be very interesting to know more about the geometry of random hyperbolic surfaces of large genus sampled according to the probability measures $B_{g,n}(X) 
\ \allowbreak d \widehat{\mu}_{\text{wp}}(X) / b_{g,n}$. In particular, the following question seems especially interesting.\\

\begin{question}
	For $\epsilon > 0$ small enough, what are the asymptotics as $g \to \infty$ of the probability that a random hyperbolic surface sampled according to $B_{g,n}(X) \  \allowbreak d \widehat{\mu}_{\text{wp}}(X) / b_{g,n}$ exhibits a simple closed geodesic of length $\leq \epsilon$?
\end{question}
$ $


\bibliographystyle{amsalpha}


\bibliography{bibliography}

\end{document}